\documentclass[10pt,a4paper,reqno]{article}

\usepackage[utf8]{inputenc}
\usepackage[T1]{fontenc}
\usepackage[english]{babel}
\usepackage{amsmath, amssymb, amsthm, mathrsfs, mathtools}
\usepackage{geometry}
\usepackage{enumitem}
\usepackage{xcolor}
\usepackage{microtype}
\usepackage{graphicx}
\usepackage{authblk}
\usepackage[breaklinks=true]{hyperref}

\hypersetup{
    colorlinks=true,
    linkcolor=blue!60!black,
    citecolor=green!40!black,
    urlcolor=blue,
    pdftitle={Optimal Control of SVIEs via Markovian Lifting},
    pdfauthor={Bolli & de Feo}
    }

\theoremstyle{plain}
\newtheorem{theorem}{Theorem}[section]
\newtheorem{lemma}[theorem]{Lemma}
\newtheorem{proposition}[theorem]{Proposition}
\newtheorem{corollary}[theorem]{Corollary}

\theoremstyle{definition}
\newtheorem{definition}[theorem]{Definition}

\newtheorem{hypothesis}[theorem]{Hypothesis}
\newtheorem{example}[theorem]{Example}

\theoremstyle{remark}
\newtheorem{remark}[theorem]{Remark}

\numberwithin{equation}{section}

\newcommand{\R}{\mathbb{R}}

\newcommand{\Tr}{\operatorname{Tr}}

\newcommand{\norm}[1]{\left\lVert#1\right\rVert}

\newcommand{\diff}{\mathrm{d}}
\newcommand{\red}[1]{\textcolor{red}{#1}}
\title{Optimal control of stochastic Volterra integral equations with completely monotone kernels and stochastic differential equations on Hilbert spaces with unbounded control and diffusion operators}

\author{Gabriele Bolli\footnote{Dipartimento di Matematica ``Guido Castelnuovo'', Sapienza Università di Roma, Roma, Italy. Email: gabriele.bolli@uniroma1.it.}\qquad Filippo de Feo\footnote{Institut für Mathematik, Technische Universität Berlin, Berlin, Germany, Email: defeo@math.tu-berlin.de. Filippo de Feo acknowledges funding by the Deutsche Forschungsgemeinschaft (DFG, German Research Foundation) – CRC/TRR 388 "Rough Analysis, Stochastic Dynamics and Related Fields" – Project ID 516748464 and by INdAM (Instituto Nazionale di Alta Matematica F. Severi) - GNAMPA (Gruppo Nazionale per l'Analisi Matematica, la Probabilità e le loro Applicazioni)}}

\date{\today}

\begin{document}

\maketitle
\begin{abstract}
The dynamic programming approach is one of the most powerful ones in optimal control. However, when dealing with optimal control problems of stochastic Volterra integral equations (SVIEs) with completely monotone kernels, deep mathematical difficulties arise and it is still not understood. These very classical problems have applications in most fields and have now become even more popular due to their applications in mathematical finance under rough volatility. In this article, we consider a class of optimal control problems of SVIEs with completely monotone kernels. Via a recent Markovian lift \cite{FGW2024}, the problem can be reformulated as an optimal control problem of stochastic differential equations (SDEs) on suitable Hilbert spaces, which due to the roughness of the kernel, presents a generator of an analytic semigroup and unbounded control and diffusion operators.

  This analysis leads us to study a general class of optimal control problems of abstract SDEs on Hilbert spaces with unbounded control and diffusion operators. This class includes optimal control problems of SVIEs with completely monotone kernels, but it is also motivated by other models. We analyze the regularity of the associated Ornstein-Uhlenbeck transition semigroup. We prove that the semigroup exhibits a new smoothing property in control directions through a general observation operator $\Gamma$, which we call $\Gamma$-smoothing. This allows us to establish existence and uniqueness of mild solutions of the Hamilton-Jacobi-Bellman equation, establish a verification theorem, and construct optimal feedback controls. We apply these results to optimal control problems of SVIEs with completely monotone kernels. To the best of our knowledge these are the first results of this kind for this abstract class of infinite dimensional problems and for the optimal control of SVIEs with completely monotone kernels.
\end{abstract}
\noindent {\bf MSC Classification}: 93E20, 
60H15,
45D05, 
49L20, 
		35R15 

\noindent {\bf Key words}: Stochastic optimal control; stochastic Volterra integral equations; completely monotone kernels; path-dependent control; abstract SDEs; $\Gamma$-smoothing; HJB equation
\begin{small}
\tableofcontents
\end{small}
\section{Introduction}
\subsection{Model problems}
\paragraph{Optimal control of SVIEs with completely monotone kernels.}
Consider  a controlled stochastic Volterra integral equation (SVIE) of the form
\begin{equation}
    \label{eq:SVIE_dynamics_intro}
    y(s) = z(s) +  \int_0^s K(s-r) [ c y(r) +   b u(r)] \, \mathrm{d}r + \int_0^s K(s-r)   g\, \mathrm{d}W(r),
\end{equation}
where $K:(0,\infty)\to \mathbb R$ is a completely monotone kernel, $z$ is an initial curve, and $u(\cdot)$ is a control process. The goal here is to minimize, over all admissible controls $u(\cdot),$ a cost functional of the form
\begin{equation}\label{eq:functional_volterra_intro}
      \tilde  J(t,z;u) = \mathbb{E} \left[ \int_t^T { \ell_1}(u(s)) \, \mathrm{d}s + \tilde {\phi}(y(T)) \right].
\end{equation}
The state equation \eqref{eq:SVIE_dynamics_intro} is  non-Markovian and cannot be treated using the standard dynamic programming approach in finite dimensions. However, Markovianity can be regained by lifting the problem onto suitable infinite dimensional spaces and many possible choices have been investigated in the literature.
Here we will use the Markovian lift recently proposed in \cite{FGW2024} (see also \cite{bianchi_bonaccorsi_canadas_friesen,WiedermannPhD}). With this procedure, the state \eqref{eq:SVIE_dynamics_intro} is rewritten as an abstract stochastic evolution equation of the form \eqref{eq:state_sde_intro} for the state variable $X(s)$ over the Hilbert space  $H:=H_{\eta} \coloneqq L^2\left(\mathbb{R}_+, (1+x)^{\eta} \bar{\mu}(\mathrm{d}x)\right)$ for suitable parameter $\eta \in \mathbb R $ and where $\bar{\mu} \coloneqq \delta_0 + \mu$ with $\mu$ is characterized by the Laplace-Bernstein representation of $K$. Here,  $A,B,G$ are  unbounded operators, due to the roughness of the kernel. While $A$ is an unbounded operator generating an analytic semigroup on $H$, the operators $B$ and $G$ become bounded over the weaker spaces $\overline H:=H_{\eta'} \hookleftarrow {H}_\eta$, when $\eta'<\eta$ is small enough. Moreover, $A$ extends to an analytic semigroup onto the weaker space $\overline H$. The state $y(s)$ is recovered through a suitable reconstruction operator $\Gamma\in \mathcal L(H,\mathbb R^n)$ via $$y(s)=\Gamma X(s).$$
On the other hand, the lift $\phi(x)= \tilde{\phi}(\Gamma x)$ of the final cost is only continuous in $H_\eta$. In Section \ref{sec:application_svie_detailed} we will detail this lifting procedure.

\paragraph{Optimal control of SDEs on Hilbert spaces with unbounded control and diffusion operators.} The previous analysis leads to the study of the following  class of optimal control problems on Hilbert spaces. Let $H,K,\Xi$ be arbitrary separable Hilbert spaces, respectively representing, the state, noise the and control spaces, and
consider the following controlled stochastic differential equation (SDEs) on  $H$ of the form
\begin{equation}\label{eq:state_sde_intro}
    \diff X(s) = (AX(s) + Bu(s))\,\diff s + G \,\diff W(s), \quad s \in [t, T], \quad X(t)=x \in H,
\end{equation}
where $A\colon D(A) \subseteq H \to H$ is the infinitesimal generator of a strongly continuous semigroup $S(t)$,  $B:K\to H,$ $G:\Xi\to H$ are linear unbounded operators, $W$ is a cylindrical Wiener process on $\Xi$, and $u(\cdot)$ is a control process with values in $K$. The goal is to minimize, over all admissible controls $u(\cdot),$ a cost functional of the form
\begin{equation} \label{eq:cost_functional_intro}
J(t,x,u) = \mathbb{E} \left[ \int_t^T \ell_1(u(s)) \, \mathrm{d}s + \phi(X(T)) \right],
\end{equation}
where $\phi= \bar{\phi}\circ \Gamma$ for a certain observation operator $\Gamma\in \mathcal{L}(H, \mathcal{Y})$, being $\mathcal{Y}$ another separable Hilbert space. To  handle unbounded control and diffusion operators, we introduce a larger separable Hilbert space $\overline{H}$ with continuous and dense embedding $H \hookrightarrow \overline{H}$ and we assume that  $B,G$ are  bounded operators on $\overline{H}$ and that    $S(t)$  can be extended to a strongly continuous semigroup $\overline{S}(t)$ on $\overline{H}$ and satisfies  the following analytic smoothing property: for any $t > 0$,  $\overline{S}(t)(\overline{H}) \subseteq D(A) \subset H.$ On the other hand, $\phi$ is continuous only in $H$.

\subsection{State of the art}
\paragraph{Optimal control of SDEs in infinite dimensions.} The optimal control  of SDEs on Hilbert spaces is a very active area of research, where deep mathematical difficulties usually arise, due to the lack of local compactness of the state spaces and the presence of unbounded operators. These challenging models are motivated by the optimal control of some of the most prominent families of stochastic partial differential equations (SPDEs) \cite{defeo_swiech_wessels,fuhrman_hu_tessitore,stannat_wesselsSICON,stannat_wesselsAAP},  SDEs with delays in the state and/or in the control \cite{deFeo,defeo_phd,defeo_federico_swiech,defeo_swiech,FGFM-I, FGFM-III,gozzi_masiero_errata,GuaMas,masiero_tessitore}, SVIEs \cite{HamaguchiMaxPrinciple2025,PT2024},
partially observed stochastic systems \cite{gozzi_swiech_JFA2000}, particle systems in Hilbert spaces \cite{defeo_gozzi_swiech_wessels}, and many others. A classical monograph on the subject is \cite{FabbriGozziSwiech}.

While  techniques to treat linear unbounded terms of the form $AX_t$ in the drift are now standard, e.g.,  via the theory of $C_0$-semigroups, the presence of unbounded operators in the control variable and/or in the diffusion is still not well understood with few results available in the literature only in specific settings, although these models arise in many real-world applications, such as path-dependent problems, boundary control, or partial observation. We refer to \cite{BolliGozzi25,FabbriGozziSwiech,GozziMasiero23,gozzi_masiero25,GuaMas13} for some key contributions that, however, do not cover our setting.  

Among these, the closest one to the setting of our control problem \eqref{eq:state_sde_intro}-\eqref{eq:cost_functional_intro} is  \cite{GozziMasiero23}. In this paper, motivated by applications in problems with delays in the control and boundary control,
they consider a particular case of the abstract state equation \eqref{eq:state_sde_intro}, where
the control operator $B$ is an unbounded linear operator like in our setting, while 
 $G$ is a bounded operator from $H$ to $H$. As they cannot expect   smoothing properties for the underlying Markov transition semigroup\footnote{see \cite{DaPratoZabczyk91,FabbriGozziSwiech} for these kind of properties}, they introduce a specific concept of
partial derivative, designed for this situation, and  develop the so-called partial smoothing method to prove that the associated
HJB equation has a solution with enough regularity to find optimal controls in feedback form. 
\paragraph{Optimal control of SVIEs.} Optimal control of SVIEs is a rapidly expanding area of research, with applications spanning a wide range of disciplines. Interest in the topic has been further stimulated by the advent of rough volatility models in mathematical finance \cite{bayer_friz_gatheral,ER-roughHeston,ER-roughHeston2,GJR18}.

SVIEs are not  Markov or not even a semi-martingale in general, so  the flow property does not apply. Similarly to other path-dependent models reviewed above, this raises deep mathematical challenges  and  Markovianity can typically be regained only by lifting the state equation onto a suitable infinite dimensional space, typically Banach or Hilbert spaces, i.e. the so called Markovian lifts. 
We refer to \cite{abijaber_miller_pham,AMP2019,agram_oksendal2015,agram_oksendal_yakhlef,bonaccorsi_confortola,bonaccorsi_confortola_mastrogiacomo,dinunno_giordano,dinunno_giordano2024,Hamaguchi_wang_I_LQ2024,HamaguchiLQ2024,HamaguchiMaxPrinciple2025,PT2024,wang_yong_zhou2023} for several key works on optimal control of SVIEs. 

Among these contributions, however, the case of completely monotone kernels,  fundamental in applications in mathematical finance under rough volatility, is only covered via Riccati equations  in \cite{abijaber_miller_pham,AMP2019,Hamaguchi_wang_I_LQ2024,HamaguchiLQ2024} in the linear quadratic case, via Peng Maximum Principle in
\cite{HamaguchiMaxPrinciple2025}, and via forward-backward systems in  \cite{bonaccorsi_confortola} in the case where the noise term is driven by a pure
jump Lévy noise  and the control acts on the intensity of the jumps (no Wiener process is considered) and  in \cite{bonaccorsi_confortola_mastrogiacomo} for particular stochasic Volterra equations of the form
$\frac{d}{d t} \int_{-\infty}^t a(t-s) u(s) \mathrm{d} s=A u(t)+g(t, u(t))[r(t, u(t), \gamma(t))+\dot{W}(t)]$.

Finally, for  pioneering works on Kolmogorov PDEs  (no control problems are considered) related to SVIEs, we  refer to \cite{barrasso_russo,wang_yong_zhous2022} for the case of regular kernels and, for rough kernels, to \cite{bonesini_jacquier_pannier,pannier2023,viens_zhang} for gaussian processes  and \cite{gasteratos_pannier} for the general case.
\subsection{Our results}
From the literature review above, it is evident that
\begin{itemize}
    \item the dynamic programming approach for the optimal control problem of SVIEs \eqref{eq:SVIE_dynamics_intro}-\eqref{eq:functional_volterra_intro} in the case of completely monotone kernels is not understood. In particular, the corresponding Hamilton–Jacobi–Bellman (HJB) equation has not been solved, and there are currently no available results establishing verification theorems or providing a construction of optimal feedback controls in this setting.
    \item The  optimal control problem  of SDEs on Hilbert spaces \eqref{eq:state_sde_intro}-\eqref{eq:cost_functional_intro} with unbounded control and diffusion operators 
    has not been studied in the literature, although these are an important class of problems motivated not only by control problems of SVIEs but also, e.g., by  SPDEs with boundary noise and boundary control \cite{FabbriGozziSwiech,GuaMas13}.
\end{itemize}
The goal of this paper is then to fill these  gaps in the literature, by solving the HJB equations corresponding to both problems, establishing verification theorems, and constructing optimal feedback controls.
Since optimal control problems of SVIEs of the form \eqref{eq:SVIE_dynamics_intro}-\eqref{eq:functional_volterra_intro} can be rewritten as optimal control problems of the form \eqref{eq:state_sde_intro}-\eqref{eq:cost_functional_intro}, we start attacking the latter ones first.
\paragraph{Optimal control of SDEs on Hilbert spaces with unbounded control and diffusion operators.}  To attack optimal control problems of the form \eqref{eq:state_sde_intro}-\eqref{eq:cost_functional_intro}, introduced in Section \ref{sec:preliminaries}, we generalize the approach of \cite{GozziMasiero23} to this new setup where not only $B$ but also  $G$ is also  unbounded on $H$. 

To solve   optimal control problems a standard strategy is the following: prove existence and uniqueness of mild solutions of the HJB equation directly and use this solution to prove verification theorems and construct optimal feedback control. To follow this strategy, however, we need to show some smoothing properties of the associated Ornstein-Uhlenbeck (OU) semigroup. In the literature, two smoothing properties have been studied, i.e. the standard (full) smoothing condition  \cite{DaPratoZabczyk91,FabbriGozziSwiech} or a weaker version, the so called partial smoothing in control directions \cite{GozziMasiero23}. However, in optimal control problems of SVIEs \eqref{eq:SVIE_dynamics_intro}-\eqref{eq:functional_volterra_intro}, we cannot expect either one.  Nevertheless, we are able to show that the OU semigroup associated to SVIEs \eqref{eq:SVIE_dynamics_intro} satisfies a new smoothing property  in control directions through the reconstruction operator $\Gamma \in \mathcal L(H,\mathbb R)$. Therefore, in Section \ref{sec:smoothing}, in the general setting \eqref{eq:state_sde_intro}-\eqref{eq:cost_functional_intro}, we  derive a more general notion of  smoothing, which we call $\Gamma$-smoothing  in control directions, or $\Gamma$-smoothing for brevity, for a general observation operator $\Gamma\in \mathcal L(H,\mathcal Y)$, where the observation space $\mathcal Y$ is another separable Hilbert space. In this setting, we prove existence of $B$-directional derivatives for functions defined on $\overline H$, see Theorem \ref{thm:partial_reg}. 

In Section \ref{sec:min_energy}, inspired by the classical work \cite{Z92}, we establish a rigorous control-theoretic interpretation of the $\Gamma$-smoothing through a minimum energy analysis via virtual control, see Theorem \ref{thm:min_energy}. This analysis is also new in the case of partial smoothing \cite{GozziMasiero23}.

In order to solve the HJB equation  on the extended space $\overline{H}$, it is necessary to preserve regularity through the time convolution $\int_0^t P_{t-s}[\cdot] \mathrm{d}s$. Therefore, in Section \ref{sec:smoothing_conv}, we prove $\Gamma$--smoothing properties of the convolution, extending the partial smoothing ones in \cite[Section 4]{FGFM-I} and \cite[Section5]{GozziMasiero23}.

In Section \ref{sec:hjb}, we prove existence, uniqueness, and suitable regularity properties of mild solutions of the HJB equation, see Theorem \ref{thm:HJB_existence}.

Our final goal is to completely solve the control problem by establishing verification theorems and constructing optimal feedback controls. Therefore, in Section \ref{sec:strong_solutions}, 
we prove that our mild solution can be approximated via $\mathcal{K}$-strong solutions, i.e. classical solutions of perturbed HJB equations, also converging in a suitable sense to the mild solution. This allows us to prove a verification theorem through a suitable approximation argument and to characterize the value function of the problem as the mild solution of the HJB equation, see Theorem \ref{thm:verification}.

In Section \ref{sec:optimal_feedback}, we construct feedback controls by completely solving the optimal control problem \eqref{eq:state_sde_intro}-\eqref{eq:cost_functional_intro}.

To the best of our knowledge these are the first results of this kind for the class of control problems on Hilbert spaces with unbounded control and diffusion operators \eqref{eq:state_sde_intro}-\eqref{eq:cost_functional_intro}.
\paragraph{Optimal control of SVIEs with completely monotone kernels.}
In Section \ref{sec:application_svie_detailed} we tackle to optimal control problems of SVIEs \eqref{eq:SVIE_dynamics_intro}-\eqref{eq:functional_volterra_intro}. Using the Markovian lift recently proposed in \cite{FGW2024} (see also \cite{bianchi_bonaccorsi_canadas_friesen,WiedermannPhD}), we lift the problem to Hilbert spaces. We verify that the assumptions of our theory, so that we can apply the theory developed to completely solve the control problem, i.e. solving the corresponding HJB equation, establishing verification theorems, and constructing optimal feedback controls. To the best of our knowledge these are the first results of this kind for the  optimal control problems of SVIEs with completely monotone kernels \eqref{eq:SVIE_dynamics_intro}-\eqref{eq:functional_volterra_intro}. Finally, for the financially oriented reader, we recall examples of completely monotone kernels, particularly used in mathematical finance under rough volatility, as well as other applications.
\section{The abstract stochastic optimal control problem}
\label{sec:preliminaries}

\paragraph{Notations.} 
Let $H, K, Z$ be real separable Hilbert spaces and $C \in \mathcal{L}(K, H)$. A function $f: H \to Z$ is \textit{$C$-directionally differentiable} at $x \in H$ in the direction $k \in K$ if the limit 
$\nabla^C f(x;k) \coloneqq \lim_{s \to 0} \frac{f(x + sCk) - f(x)}{s}$
exists in $Z$. The function is \textit{$C$-Gâteaux differentiable} if $k \mapsto \nabla^C f(x;k)$ defines an element of $\mathcal{L}(K, Z)$, and \textit{$C$-Fréchet differentiable} if the convergence is uniform for $k \in B_K(0,1)$. 

For $Z=\mathbb{R}$ and $\alpha \in [0, 1)$, we define the following Banach spaces: $C^{1,C}_b(H)$ is the space of $f \in C_b(H)$ with a continuous and bounded $C$-Fréchet derivative $\nabla^C f: H \to K$. 

The space $C^{0,1,C}_{\alpha}([0,T] \times H)$ contains functions $f \in C_b([0,T] \times H)$ such that $f(t, \cdot) \in C^{1,C}_b(H)$ for $t \in (0,T]$, with 
$(t,x) \mapsto t^{\alpha} \nabla^C f(t,x)$ 
continuous and bounded from $(0,T] \times H$ to $K$, and norm 
$\| f \|_{C^{0,1,C}_{\alpha}} \coloneqq \sup_{[0,T] \times H} |f| + \sup_{(0,T] \times H} t^{\alpha} \| \nabla^C f \|_{K}.$

Finally, $C^{0,2,C}_{\alpha}([0,T] \times H)$ comprises $f \in C^{0,1}_b([0,T] \times H)$ such that 
$\nabla^C \nabla f \in \mathcal{L}(H, K)$ 
exists for $t \in (0,T]$ with $(t,x) \mapsto t^{\alpha} \nabla^C \nabla f(t,x)$ 
continuous and bounded from $(0,T] \times H$ to $\mathcal{L}(H,K)$, with norm 
$\| f \|_{C^{0,2,C}_{\alpha}} \coloneqq \sup_{[0,T] \times H} | f | + \sup_{[0,T] \times H}\| \nabla f \|_H  + \sup_{(0,T] \times H} t^{\alpha} \| \nabla^C \nabla f \|_{\mathcal{L}(H, K)}$.
\paragraph{Abstract framework.}We now delineate the functional analytic and probabilistic foundations necessary for the analysis of the infinite-dimensional control problem. The mathematical setting is grounded on real, separable Hilbert spaces $(H,\langle \cdot ,\cdot\rangle_H)$, $(\tilde{U},\langle \cdot,\cdot\rangle_{\tilde{U}})$, and $(\Xi,\langle \cdot,\cdot\rangle_\Xi)$, which respectively denote the state space, the control space, and the noise space. 
To rigorously handle singular control operators that do not take values directly in the state space $H$ we introduce a larger separable Hilbert space $\overline{H}$ with continuous and dense embedding
\begin{equation} \label{eq:embedding}
H \hookrightarrow \overline{H}.
\end{equation}
 We denote by $\norm{\cdot}_H$ and $\norm{\cdot}_{\overline{H}}$ the norms in $H$ and $\overline{H}$, respectively.

\paragraph{State equation.}
The stochastic analysis is conducted on a complete probability space $(\Omega, \mathcal{F}, \mathbb{P})$, endowed with a filtration $\mathbb{F}=(\mathcal{F}_t)_{t\geq 0}$ satisfying the standard assumptions of right-continuity and completeness. Within this environment, we consider the state dynamics governed by the linear stochastic evolution equation on the extended space $\overline{H}$:
\begin{equation}\label{eq:state_sde}
    \diff X(s) = (AX(s) + Bu(s))\,\diff s + G \,\diff W(s), \quad s \in [t, T], \quad X(t)=x \in \overline{H}.
\end{equation}
 Since $B$ maps into the larger space $\overline{H}$ defined in \eqref{eq:embedding}, this equation is understood strictly in the mild sense defined below in Definition \ref{def:mild_sol}, consistent with the approach for unbounded control operators \cite{BolliGozzi25, GozziMasiero23, FGFM-III}.

\begin{hypothesis}[Structural Assumptions on System Data]\label{hyp:system_data}
The operators appearing in the state equation \eqref{eq:state_sde} satisfy the following structural conditions, consistent with the framework of $C_0$-semigroups \cite{Pazy83, EngelNagelBook}:

\begin{enumerate}[label=\textit{(\roman*)}, itemsep=5pt, topsep=5pt]
    \item \label{hyp:item:semigroup}
    The operator $A\colon D(A) \subseteq H \to H$ is the infinitesimal generator of a strongly continuous semigroup $S(t) \coloneqq e^{tA}$ on $H$. 
    We assume that $S(t)$ can be extended to a strongly continuous semigroup $\overline{S}(t) \coloneqq \overline{e^{tA}}$ on the larger space $\overline{H}$ with values on $H$.
    Both semigroups satisfy the standard exponential growth bound:
    \[
    \norm{S(t)}_{\mathcal{L}(H)} \le M e^{\omega t}, \quad \norm{\overline{S}(t)}_{\mathcal{L}(\overline{H})} \le \overline{M} e^{\overline{\omega} t}, \quad \forall t \ge 0.
    \]

    \item \label{hyp:item:smoothing_B}
    The control operator $B\in \mathcal{L}(K, \overline{H})$. That is, the control acts effectively in the larger space $\overline{H}$ but implies a singularity in the basic state space $H$.
    However, thanks to the smoothing condition in \ref{hyp:item:semigroup}, for any $t > 0$, the operator $\overline{S}(t)B$ maps $K$ into $D(A) \subset H$.

    \item \label{hyp:item:diffusion}
    The diffusion operator $G$ belongs to $\mathcal{L}(\Xi, \overline{H})$. That is, the noise acts in the extended space $\overline{H}$, allowing for singular diffusion kernels.
    However, thanks to the smoothing condition in \ref{hyp:item:semigroup}, for any $t > 0$, the operator $\overline{S}(t)G$ maps $\Xi$ into $D(A) \subset H$.

    \item    
    The process $W$ is an $(\mathcal{F}_t)$-adapted cylindrical Wiener process with values in the Hilbert space $\Xi$.

    \item \label{hyp:item:covariance} 
    To ensure the well-posedness of the stochastic convolution in $H$, the linear covariance operator $Q_t \in \mathcal{L}(\overline{H})$, defined formally by the integral in $\overline{H}$:
    \begin{equation}\label{eq:Qt_definition}
    Q_t \coloneqq \int_0^t \overline{S}(s)GG^*\overline{S}(s)^*\,\diff s,
    \end{equation}
    is assumed to be a trace class operator on $\overline{H}$ for all $t>0$, i.e., $\Tr(Q_t) < \infty$.

    \item \label{hyp:item:controls}
    The set of control values $U$ is a bounded and closed subset of $K$. The space of admissible controls $\mathcal{U}$ consists of all $(\mathcal{F}_s)$-progressively measurable processes $u\colon [{t},\infty) \times \Omega \to U$.
\end{enumerate}
\end{hypothesis}

The concept of solution is the mild form on the extended space.

\begin{definition}[Mild Solution]
\label{def:mild_sol}
Given an initial time $t \in [0, T]$ and an initial state $x \in \overline{H}$, an $\overline{H}$-valued (for $s>t$) adapted process $X(\cdot; t, x, u)$ is defined as a \textit{mild solution} to the stochastic differential equation \eqref{eq:state_sde} if it satisfies the following integral equation in the space $\overline{H}$, $\mathbb{P}$-almost surely, for all $s \in [t, T]$:
    \begin{equation}\label{eq:mild_solution}
        X(s) = \overline{S}(s-t)x + \int_t^s \overline{S}(s-r) B u(r) \,\diff r + \int_t^s \overline{S}(s-r) G \,\diff W(r).
    \end{equation}
    Note that for $s>t$, each term on the right hand side belongs to $H$ due to the analytic smoothing assumptions.
\end{definition}
Clearly, uniqueness of mild solutions is immediate.
\paragraph{Cost functional.} Given $t \in [0, T]$, $x \in \overline{H}$
the goal is to minimize, over all admissible controls $u(\cdot)\in \mathcal U$, the following cost functional
\begin{equation} \label{eq:cost_functional}
J(t,x,u) = \mathbb{E} \left[ \int_t^T \ell_1(u(s)) \, \mathrm{d}s + \phi(X(T)) \right],
\end{equation}
where $l_1:U\to \mathbb R, \phi:H\to \mathbb R$ are the running cost and terminal cost, respectively.
\section{$\Gamma$-Smoothing for the Ornstein-Uhlenbeck Semigroup}
\label{sec:smoothing}
In applications with singular control {operators}, the standard global smoothing on $\overline{H}$ \cite{DaPratoZabczyk91} typically fails. Moreover, in control problems of SVIEs (see Section \ref{sec:application_svie_detailed}), we cannot use the so called partial smoothing approach  \cite{FGFM-I, FGFM-III, GozziMasiero23}. 
However, motivated by these applications, we can obtain differentiability in the directions affected by the control operator $B \in \mathcal{L}(K, \overline{H})$ through a suitable observation operator $\Gamma$, which we call {$\Gamma$}-smoothing in control directions or \red{$\Gamma$}-Smoothing for brevity. 

We analyze the regularity of the transition semigroup associated with the uncontrolled process. Let $Z(\cdot)$ be the solution to the linear stochastic equation evolved starting from $\overline{H}$:
\begin{equation} \label{eq:OU_process_uncontrolled}
dZ(t)=A Z(t)\,\mathrm{d}t + G\,\mathrm{d}W(t), \quad t\geq0,\quad
Z(0)=x\in \overline{H}.
\end{equation}
The transition semigroup $P_t$ acting on bounded Borel functions $\phi \in B_b(\overline{H})$ is defined by:
\begin{equation} \label{eq:Pt_semigroup_def}
P_{t}[\phi](x) := \mathbb{E}\left[\phi\left(Z(t;x)\right)\right] = \int_{\overline{H}}\phi\left(\overline{S}(t)x+y\right)\,\mathcal{N}(0,Q_{t})(\mathrm{d}y), \quad x\in \overline{H}, \quad t\geq 0.
\end{equation}
Here, $\mathcal{N}(0, Q_t)$ denotes the Gaussian measure on $\overline{H}$ with covariance operator $Q_t$.

\begin{definition}[$ \Gamma$-Cylindrical Functions]
Let $H$ be the regular state space and let $\mathcal{Y}$ be another separable Hilbert space. Consider a bounded linear operator $\Gamma \in \mathcal{L}(H, \mathcal{Y})$. The set $B_{b}^{\Gamma}(H)$ is defined as the class of functions $\phi: H \to \mathbb{R}$ satisfying the following factorization property: there exists a bounded Borel measurable function $\bar{\phi} \in B_b(\mathcal{Y})$ such that
\begin{equation}
    \phi(x) = \bar{\phi}(\Gamma x) \quad \text{for all } x \in H.
\end{equation}
\end{definition}

\begin{remark}[Well-posedness of the evaluation on $\overline{H}$]
The evaluation $P_t[\phi](x)$ for $x \in \overline{H}$ and $t>0$ is well-defined. Under Hypothesis \ref{hyp:system_data}, for $t>0$, $\overline{S}(t)x \in H$, thus $\Gamma$ can be applied to the deterministic part. The noise term also lives in $H$ almost surely.
\end{remark}

\begin{hypothesis}[$\Gamma$-Smoothing Condition]\label{hyp:partial_smoothing}
We assume that the noise regularizes the system in the specific directions driven by the control operator $B \in \mathcal{L}(K, \overline{H})$ through the observation  operator $\Gamma$.
\begin{enumerate}[label=\textit{(\roman*)}]
\item For every $t> 0$, we assume the following inclusion of ranges in the observation space $\mathcal{Y}$:
\begin{equation} \label{eq:range_inclusion_partial}
\operatorname{Im}\left(\Gamma \overline{S}(t)B\right) \subseteq \operatorname{Im}\left( (Q_{t}^\Gamma)^{1/2}\right), \quad \textit{where }  Q_{t}^\Gamma:=\Gamma Q_{t} \Gamma^*=\int_0^t \Gamma \overline{S}(s)GG^*\overline{S}(s)^*\Gamma^*\,\diff s.
\end{equation}
By Hypothesis \ref{hyp:system_data}, the operator $\overline{S}(t)B$ maps $K$ into $H$. Since $\Gamma \in \mathcal{L}(H, \mathcal{Y})$, the composition $\Gamma \overline{S}(t)B$ is a well-defined bounded linear operator from $K$ to $\mathcal{Y}$.
Consequently, the regularization operator $\Lambda^{\Gamma,B}(t):K\to \mathcal{Y}$ defined by
\begin{equation} \label{eq:Lambda_partial_def}
\Lambda^{\Gamma,B}(t)k:=\left(\Gamma Q_{t} \Gamma ^*\right)^{-1/2} \Gamma \overline{S}(t)Bk
\end{equation}
is well-defined and bounded.

\item There exist $\kappa_0>0$ and $\gamma \in (0,1)$ such that:
\begin{equation} \label{eq:Lambda_partial_bound}
\|\Lambda^{\Gamma,B}(t)\|_{\mathcal{L}(K,\mathcal{Y})} \le \kappa_0 \left(t^{-\gamma}\vee 1\right), \quad \forall t > 0.
\end{equation}
\end{enumerate}
\end{hypothesis}

We can now state the main result on the existence of $B$-directional derivatives for functions defined on $\overline{H}$.

\begin{theorem}[{$B$-Differentiability via $\Gamma$-regularization}]
\label{thm:partial_reg}
Let $\phi\in B_{b}^{\Gamma}(H)$ with $\Gamma \in \mathcal{L}(H, \mathcal{Y})$ and assume Hypothesis \ref{hyp:partial_smoothing} holds. Then, for any $t>0$, $P_{t}[\phi]$ is differentiable in the generalized directions of $\operatorname{Im}(B)$. Specifically, for any $x \in \overline{H}$, there exists a vector $\nabla^B P_t[\phi](x) \in K$ such that the derivative in direction $k$ satisfies the Bismut-Elworthy-Li type formula:
\begin{equation}\label{eq:BEL_formula_partial}
\langle\nabla^{B}P_{t}[\phi](x),k\rangle_{K}= \int_{\overline{H}} \phi\left(\overline{S}(t)x+y\right)\langle\Lambda^{\Gamma,B}(t)k,(\Gamma Q_{t} \Gamma^{*})^{-1/2}\Gamma y\rangle_{\mathcal{Y}} \,\mathcal{N}(0,Q_{t})(\mathrm{d}y).
\end{equation}
Here, the term $\Gamma y$ is understood in the sense of the projected Gaussian measure on $\mathcal{Y}$.
Moreover, we have the smoothing estimate:
\begin{equation}\label{eq:smoothing_est_partial}
\|\nabla^{B} P_{t}[\phi](x)\|_{K} \leq \| \phi\|_{\infty} \|\Lambda^{\Gamma,B}(t)\|_{\mathcal{L}(K,\mathcal{Y} )} \leq \kappa_0 t^{-\gamma} \| \phi\|_{\infty}.
\end{equation}
\end{theorem}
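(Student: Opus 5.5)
The idea is to reduce everything to a finite-or-infinite dimensional Gaussian computation on the observation space $\mathcal{Y}$, where the Cameron--Martin theorem applies. Since $\phi=\bar\phi\circ\Gamma$, we have for $x\in\overline{H}$, $t>0$,
\[
P_t[\phi](x)=\int_{\overline H}\bar\phi\big(\Gamma\overline S(t)x+\Gamma y\big)\,\mathcal N(0,Q_t)(\mathrm dy)=\int_{\mathcal Y}\bar\phi\big(\Gamma\overline S(t)x+z\big)\,\mathcal N(0,Q^\Gamma_t)(\mathrm dz),
\]
using that the image of $\mathcal N(0,Q_t)$ under $\Gamma$ (defined a.s. because the stochastic convolution lives in $H$ for $t>0$, cf. the Remark above) is the centered Gaussian measure $\mathcal N(0,\Gamma Q_t\Gamma^*)$ on $\mathcal Y$. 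I would check this by computing characteristic functionals: $\mathbb E\, e^{i\langle \Gamma y,h\rangle_{\mathcal Y}}=\exp(-\tfrac12\langle Q_t\Gamma^*h,\Gamma^*h\rangle)$, with $Q_t\Gamma^*h$ interpreted through the $H$-valued stochastic convolution, giving exactly $Q_t^\Gamma$ as in \eqref{eq:range_inclusion_partial}. The shift in direction $sBk$ becomes, after applying $\overline S(t)$ and $\Gamma$, the shift $s\,\Gamma\overline S(t)Bk\in\mathcal Y$.

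By Hypothesis \ref{hyp:partial_smoothing}(i), $a:=\Gamma\overline S(t)Bk\in \operatorname{Im}((Q_t^\Gamma)^{1/2})$, i.e. $a$ lies in the Cameron--Martin space of $\mu:=\mathcal N(0,Q_t^\Gamma)$, with $(Q^\Gamma_t)^{-1/2}a=\Lambda^{\Gamma,B}(t)k$ (pseudo-inverse). The Cameron--Martin formula gives $\mu(\cdot - sa)\ll\mu$ with density
\[
\rho_s(z)=\exp\Big(s\,\langle \Lambda^{\Gamma,B}(t)k,(Q^\Gamma_t)^{-1/2}z\rangle_{\mathcal Y}-\tfrac{s^2}{2}\|\Lambda^{\Gamma,B}(t)k\|^2_{\mathcal Y}\Big),
\]
where $z\mapsto\langle h,(Q_t^\Gamma)^{-1/2}z\rangle$ is the white-noise map, an $L^2(\mu)$ limit, centered Gaussian with variance $\|h\|^2$. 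Hence $P_t[\phi](x+sBk)=\int\bar\phi(\Gamma\overline S(t)x+z)\rho_s(z)\,\mu(\mathrm dz)$, and the difference quotient equals $\int\bar\phi(\cdots)\frac{\rho_s-1}{s}\,\mathrm d\mu$. Since $\frac{\rho_s-1}{s}\to \langle \Lambda k,(Q^\Gamma_t)^{-1/2}z\rangle$ in $L^2(\mu)$ (explicit Gaussian moment computation: $\int\rho_s^2=e^{s^2\|\Lambda k\|^2}$) and $\bar\phi$ is bounded, we obtain the directional derivative, which is \eqref{eq:BEL_formula_partial} after pulling back to $\overline H$. Linearity and boundedness in $k$ follow from the formula: by Cauchy--Schwarz, $|\langle\nabla^B P_t\phi(x),k\rangle|\le\|\phi\|_\infty\|\Lambda^{\Gamma,B}(t)k\|$, so Riesz gives $\nabla^BP_t[\phi](x)\in K$ and \eqref{eq:smoothing_est_partial} (with the second inequality from \eqref{eq:Lambda_partial_bound}, for $t\le1$ say). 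Uniformity of the $L^2$ convergence over $\|k\|\le1$ (the remainder is bounded by $C s\|\Lambda k\|^2$) also gives $B$-Fréchet differentiability.

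The main obstacle I expect is the measure-theoretic justification of the push-forward step and the white-noise map: $\Gamma$ is only defined on $H$, while $\mathcal N(0,Q_t)$ lives on $\overline H$, so "$\Gamma y$" must be constructed as the $\mathcal Y$-valued random variable $\Gamma\int_0^t\overline S(t-r)G\,\mathrm dW(r)$ and its law identified with $\mathcal N(0,Q^\Gamma_t)$; also, if $Q_t^\Gamma$ is not injective one must restrict to $\overline{\operatorname{Im}}(Q_t^\Gamma)$ (the support of $\mu$) and use the pseudo-inverse, checking that $a$ lies there. Once this is in place, the rest is the standard Cameron--Martin argument.
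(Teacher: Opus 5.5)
Your proposal is correct and follows essentially the same route as the paper. Both arguments push $\mathcal N(0,Q_t)$ forward to $\mathcal N(0,Q_t^\Gamma)$ on $\mathcal Y$, apply Cameron--Martin to the shift $\Gamma\overline S(t)Bk\in\operatorname{Im}((Q_t^\Gamma)^{1/2})$, differentiate the density at $0$, and bound the result by Cauchy--Schwarz. You also make explicit several points the paper only sketches: the $L^2(\mu)$ convergence of $(\rho_s-1)/s$, the Riesz step, uniformity in $k$, and the fact that the final bound $\kappa_0 t^{-\gamma}$ follows from $\kappa_0(t^{-\gamma}\vee 1)$ only when $t\le 1$.
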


\begin{proof}
The proof proceeds by reduction to the projected Gaussian measure on $\mathcal{Y}$.
Let $\phi \in B_{b}^{\Gamma}(H)$, so $\phi(z) = \bar{\phi}(\Gamma z)$ for $z \in H$.
For $t>0$ and $x \in \overline{H}$, the term $\overline{S}(t)x$ belongs to $H$.
Let $\nu_t := \mathcal{N}(0, \mathcal{Q}_t^\Gamma)$ be the Gaussian measure on $\mathcal{Y}$ defined by the projection of $\mathcal{N}(0, Q_t)$ via $\Gamma$. The transition semigroup can be written as
$
P_{t}[\phi](x) = \int_{\mathcal{Y}} \bar{\phi}(\Gamma \overline{S}(t)x + \xi) \, \nu_t(\mathrm{d}\xi).$
Fix $x \in \overline{H}$, a direction $k \in K$ and $\alpha>0$. We consider the perturbation of the initial condition in the direction $B$. The perturbed trajectory at time $t$ is effectively $\overline{S}(t)x + \alpha \overline{S}(t)Bk$ (viewed in $H$).
Since $\overline{S}(t)B$ maps $K$ into $H$ (Hypothesis \ref{hyp:system_data}), the vector $\overline{S}(t)Bk$ lies in the domain of $\Gamma$. Thus, by linearity,
$
\Gamma \left( \overline{S}(t)x + \alpha \overline{S}(t)Bk \right) = \Gamma \overline{S}(t)x + \alpha \Gamma \overline{S}(t) B k.
$
Let $h := \Gamma \overline{S}(t)Bk \in \mathcal{Y}$. Note that $h$ is well-defined precisely because of the smoothing action of $\overline{S}(t)$ on the image of $B$.
The difference quotient becomes:
\begin{equation}
\Delta_\alpha [\phi](x):={\frac{1}{\alpha}[P_{t}[\phi](x+\alpha Bk)-P_{t}[\phi](x)] }= \frac{1}{\alpha} \left[ \int_{\mathcal{Y}} \bar{\phi}(\Gamma \overline{S}(t)x + \alpha h + \xi) \, \nu_t(\mathrm{d}\xi) - \int_{\mathcal{Y}} \bar{\phi}(\Gamma \overline{S}(t)x + \xi) \, \nu_t(\mathrm{d}\xi) \right].
\end{equation}
We now invoke the Cameron-Martin Theorem on the space $\mathcal{Y}$ (see \cite[Proposition 2.26]{DaPratoZabczyk14}). By Hypothesis \ref{hyp:partial_smoothing}(i), the shift $h$ satisfies
$h \in \operatorname{Im}(\Gamma \overline{S}(t)B) \subseteq \operatorname{Im}((\mathcal{Q}_t^\Gamma)^{1/2}).$
This inclusion guarantees that the shifted measure $\nu_t(\cdot - \alpha h)$ is absolutely continuous with respect to $\nu_t$. The Radon-Nikodym derivative is  
$\rho_\alpha(\xi) = \exp\left( \alpha \langle (\mathcal{Q}_t^\Gamma)^{-1/2} h, (\mathcal{Q}_t^\Gamma)^{-1/2} \xi \rangle_{\mathcal{Y}} - \frac{\alpha^2}{2} \| (\mathcal{Q}_t^\Gamma)^{-1/2} h \|_{\mathcal{Y}}^2 \right).$
Identifying $(\mathcal{Q}_t^\Gamma)^{-1/2} h = \Lambda^{\Gamma,B}(t)k$, and proceeding {similarly to} the standard case (taking the limit $\alpha \to 0$), we obtain:
\begin{equation}
\langle \nabla^B P_t[\phi](x), k \rangle_K = \int_{\mathcal{Y}} \bar{\phi}(\Gamma \overline{S}(t)x + \xi) \langle \Lambda^{\Gamma,B}(t)k, (\mathcal{Q}_t^\Gamma)^{-1/2} \xi \rangle_{\mathcal{Y}} \, \nu_t(\mathrm{d}\xi).
\end{equation}
Rewriting the integral over $\overline{H}$ (where $\xi$ corresponds to the projection $\Gamma y$ of the noise $y \sim \mathcal{N}(0, Q_t)$), we recover formula \eqref{eq:BEL_formula_partial}.
The estimate \eqref{eq:smoothing_est_partial} follows immediately from the Hölder inequality on $\mathcal{Y}$:
\begin{equation}
|\langle \nabla^B P_t[\phi](x), k \rangle_K| \leq \|\bar{\phi}\|_\infty \left( \mathbb{E}_{\nu_t}\left[ \left| \langle \Lambda^{\Gamma,B}(t)k, (\mathcal{Q}_t^\Gamma)^{-1/2} \xi \rangle_{\mathcal{Y}} \right|^2 \right] \right)^{1/2} = \|\phi\|_\infty \|\Lambda^{\Gamma,B}(t)k\|_{\mathcal{Y}}.
\end{equation}
\end{proof}

\section{Minimum Energy Analysis via Virtual Control}
\label{sec:min_energy}

In this section, we establish a rigorous control-theoretic interpretation of the singularity operator $\Lambda^{\Gamma,B}(t)$ introduced in Hypothesis \ref{hyp:partial_smoothing}. We demonstrate that its operator norm quantifies the minimum energy required to replicate the state displacement induced by the control operator $B$ using the system's inherent noise channels. This analysis bridges the analytic properties of the singular kernel with the controllability of the projected system.

We generalize the approach of \cite[Section 15.1]{Z92} to the case of partial controllability where observations are restricted to the regular space $H$.
Consider the linear stochastic system \eqref{eq:state_sde} with zero initial condition and zero drift. The diffusion term generates a reachable set in the extended space $\overline{H}$ which is instantaneously smoothed into $H$. To analyze its geometric relation with the control operator $B$, we formulate an auxiliary \textit{deterministic} control problem. In this setting, the noise operator $G$ acts as the control operator. Let $v \in L^2(0,t; \Xi)$ denote a \textit{virtual control} acting through $G$. The state trajectory $X_v(\cdot)$ (viewed in $\overline{H}$) is governed by the evolution equation:
\begin{equation}
\label{eq:state_dyn_virtual_diff}
\begin{aligned}
\mathrm{d}X_v(s) &= A X_v(s)\,\mathrm{d}s + G v(s)\,\mathrm{d}s, \quad s \in [0, t], \quad
X_v(0) = 0.
\end{aligned}
\end{equation}
Its mild solution is given by 
$X_v(s) = \int_0^s \overline{S}(s-r) G v(r) \, \mathrm{d}r, $ $ s \in [0, t].$
 By the analytic smoothing condition (Hypothesis \ref{hyp:system_data}), for any $r < s$, the operator $\overline{S}(s-r)G$ maps $\Xi$ into the regular space $H$. Consequently, the integral takes values in $H$. This ensures that the application of the observation  operator $\Gamma \in \mathcal{L}(H, \mathcal{Y})$ is well-defined. We define the \textit{input-to-projected-state} operator $\mathcal{L}_t^\Gamma: L^2(0,t; \Xi) \to \mathcal{Y}$ as the map taking a control history to the final projected state:
\begin{equation} \label{eq:input_to_state_op}
\mathcal{L}_t^\Gamma v \coloneqq \Gamma X_v(t) = \int_0^t \Gamma \overline{S}(t-r) G v(r) \, \mathrm{d}r.
\end{equation}
By duality, the adjoint operator $(\mathcal{L}_t^\Gamma)^*: \mathcal{Y} \to L^2(0,t; \Xi)$ is uniquely defined. For any $z \in \mathcal{Y}$ and $v \in L^2(0,t; \Xi)$, the identity $\langle \mathcal{L}_t^\Gamma v, z \rangle_{\mathcal{Y}} = \langle v, (\mathcal{L}_t^\Gamma)^* z \rangle_{L^2}$ yields 
$\left[ (\mathcal{L}_t^\Gamma)^* z \right](r) = G^* \overline{S}(t-r)^* \Gamma^* z, $ $ r \in [0,t].$
A crucial observation is that the projected controllability Gramian $Q_t^\Gamma$, defined in \eqref{eq:range_inclusion_partial}, admits the factorization:
\begin{equation} \label{eq:gramian_factorization}
Q_t^\Gamma = \mathcal{L}_t^\Gamma (\mathcal{L}_t^\Gamma)^*.
\end{equation}
Fix a control direction $k \in K$ and consider the displacement caused by the physical control term at time $t$. We define the target vector in the observation space $\mathcal{Y}$ as:
\begin{equation} \label{eq:target_eta}
\eta_k \coloneqq -\Gamma \overline{S}(t)Bk.
\end{equation}
Note that $\eta_k$ is well-defined because $\overline{S}(t)B$ maps $K$ into $H$ (Hypothesis \ref{hyp:system_data}).
The condition that the virtual system can nullify the total projected state is equivalent to the existence of a solution $v$ to the linear operator equation $\mathcal{L}_t^\Gamma v = \eta_k$.

\begin{theorem}[Minimum Energy Characterization]
\label{thm:min_energy}
Let $t > 0$ and $k \in K$.
\begin{enumerate}[label=(\roman*)]
    \item \label{item:min_energy_i}
     The target displacement $\eta_k$ lies in the reachable set of the virtual system, i.e., $\eta_k \in \operatorname{Im}(\mathcal{L}_t^\Gamma)$, if and only if the following range inclusion holds:
    \begin{equation} \label{eq:douglas_inclusion}
    \eta_k \in \operatorname{Im}\left((Q_t^\Gamma)^{1/2}\right).
    \end{equation}
    Therefore, $\eta_k \in \operatorname{Im}(\mathcal{L}_t^\Gamma)$, for all $k\in K$ if and only if \eqref{eq:range_inclusion_partial} holds.
    \item \label{item:min_energy_ii}
     Assume that condition \eqref{eq:douglas_inclusion} holds. Let $\hat{v}$ be the unique minimum-energy control solving $\mathcal{L}_t^\Gamma v = \eta_k$, defined by 
    $
    \hat{v} \coloneqq \operatorname*{arg\,min} \left\{ \|v\|_{L^2(0,t;\Xi)} : \mathcal{L}_t^\Gamma v = \eta_k \right\}.
    $
    Then, the following isometric relation holds:
    \begin{equation}
    \label{eq:norm_identity}
    \|\hat{v}\|_{L^2(0,t;\Xi)} = \left\| (Q_t^\Gamma)^{-1/2} \eta_k \right\|_{\mathcal{Y}} = \|\Lambda^{\Gamma,B}(t)k\|_{\mathcal{Y}}.
    \end{equation}
    Consequently, $\|\Lambda^{\Gamma,B}(t)\|_{\mathcal{L}(K, \mathcal{Y})}$ represents the worst-case energy required to steer a unit vector from $K$ using the diffusion channel.
     
    \item \label{item:min_energy_iii}
     If the stronger condition $\eta_k \in \operatorname{Im}(Q_t^\Gamma)$ holds, the optimal virtual control is given explicitly by:
    \begin{equation}
    \label{eq:opt_ctrl_form}
    \hat{v}(s) = - G^* \overline{S}(t-s)^* \Gamma^* (Q_t^\Gamma)^{\dagger} \Gamma \overline{S}(t)Bk, \quad s \in [0, t],
    \end{equation}
    where $(Q_t^\Gamma)^{\dagger}$ denotes the Moore-Penrose pseudoinverse.
\end{enumerate}
\end{theorem}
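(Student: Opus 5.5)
The argument is the standard Hilbert-space theory of minimum-norm solutions, applied to $\mathcal{L}_t^\Gamma\colon L^2(0,t;\Xi)\to\mathcal{Y}$ and combined with the factorization \eqref{eq:gramian_factorization}. First I check that $\mathcal{L}_t^\Gamma$ is bounded. By \eqref{eq:gramian_factorization}, $\|(\mathcal{L}_t^\Gamma)^*z\|_{L^2}^2=\langle Q_t^\Gamma z,z\rangle_{\mathcal{Y}}\le \|\Gamma\|^2\,\|Q_t\|\,\|z\|^2$. The quantity $\|\Gamma\|\,\|Q_t\|$ is finite because $Q_t$ is trace class, viewed as a covariance with values in $H$. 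This is the same implicit requirement that makes $Q_t^\Gamma$ well defined in Hypothesis \ref{hyp:partial_smoothing}. Hence $\mathcal{L}_t^\Gamma$ is bounded. The factorization itself comes from the explicit adjoint formula $[(\mathcal{L}_t^\Gamma)^*z](r)=G^*\overline{S}(t-r)^*\Gamma^*z$ together with the change of variables $s=t-r$ in \eqref{eq:range_inclusion_partial}.

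For (i), I apply the Douglas lemma, or equivalently the polar decomposition. For a bounded operator $L$ between Hilbert spaces, $\operatorname{Im}(L)=\operatorname{Im}((LL^*)^{1/2})$. Concretely, write $L^*=V(LL^*)^{1/2}$ with $V$ a partial isometry from $\overline{\operatorname{Im}(LL^*)^{1/2}}$ onto $\overline{\operatorname{Im}L^*}$. Then $L=(LL^*)^{1/2}V^*$, and $V^*$ maps $L^2$ onto $\overline{\operatorname{Im}}(LL^*)^{1/2}=(\ker(LL^*)^{1/2})^\perp$. This gives the equality of ranges. With $L=\mathcal{L}_t^\Gamma$ we get $\eta_k\in\operatorname{Im}\mathcal{L}_t^\Gamma$ if and only if $\eta_k\in\operatorname{Im}(Q_t^\Gamma)^{1/2}$. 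Quantifying over $k$, and noting that $\eta_k=-\Gamma\overline{S}(t)Bk$ with $\operatorname{Im}$ a linear subspace, gives the equivalence with \eqref{eq:range_inclusion_partial}.

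For (ii), the solution set $\{v:\mathcal{L}_t^\Gamma v=\eta_k\}$ is a nonempty closed affine subspace. Its unique element of minimal norm is the one lying in $(\ker\mathcal{L}_t^\Gamma)^\perp=\overline{\operatorname{Im}(\mathcal{L}_t^\Gamma)^*}$. Using $L=(LL^*)^{1/2}V^*$, I take $\hat v=V\,(Q_t^\Gamma)^{-1/2}\eta_k$, where $(Q_t^\Gamma)^{-1/2}$ is the pseudo-inverse onto $(\ker (Q_t^\Gamma)^{1/2})^\perp$; this is the convention under which $\Lambda^{\Gamma,B}$ is defined in \eqref{eq:Lambda_partial_def}. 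Then $L\hat v=(Q_t^\Gamma)^{1/2}V^*V(Q_t^\Gamma)^{-1/2}\eta_k=\eta_k$, since $V^*V$ is the projection onto $(\ker(Q_t^\Gamma)^{1/2})^\perp$. Moreover $\hat v\in\operatorname{Im}V=\overline{\operatorname{Im}L^*}$, so $\hat v$ is the minimizer. Because $V$ is isometric on that subspace, $\|\hat v\|=\|(Q_t^\Gamma)^{-1/2}\eta_k\|$. The second equality in \eqref{eq:norm_identity} is the definition of $\Lambda^{\Gamma,B}(t)k$ together with the linearity $\eta_k=-\Gamma\overline{S}(t)Bk$; the sign disappears under the norm. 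The main obstacle is this step: making the pseudo-inverse conventions consistent and verifying the identification of $\operatorname{Im}V$ with $(\ker L)^\perp$. A cleaner alternative route is to show directly that $\|(Q^\Gamma_t)^{-1/2}\eta\|=\inf\{\|v\|:Lv=\eta\}$ by the variational formula $\sup_z\{\langle\eta,z\rangle^2/\|L^*z\|^2\}$. I would use that as a cross-check.

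For (iii), if $\eta_k=Q_t^\Gamma w$, set $w=(Q_t^\Gamma)^\dagger\eta_k$ and $v=(\mathcal{L}_t^\Gamma)^*w$. Then $\mathcal{L}_t^\Gamma v=Q_t^\Gamma(Q_t^\Gamma)^\dagger\eta_k=\eta_k$, since $\eta_k\in\operatorname{Im}Q_t^\Gamma$. Also $v\in\operatorname{Im}L^*\subseteq(\ker L)^\perp$, so $v=\hat v$ by uniqueness. Substituting the adjoint formula and $\eta_k=-\Gamma\overline{S}(t)Bk$ yields \eqref{eq:opt_ctrl_form}.
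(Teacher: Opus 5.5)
Your proposal is correct and follows essentially the same route as the paper: the Douglas range identity $\operatorname{Im}\mathcal{L}_t^\Gamma=\operatorname{Im}(Q_t^\Gamma)^{1/2}$, obtained from the factorization $Q_t^\Gamma=\mathcal{L}_t^\Gamma(\mathcal{L}_t^\Gamma)^*$, for (i); the minimum-norm solution as the unique element of $(\ker\mathcal{L}_t^\Gamma)^\perp$, with the isometry giving its norm, for (ii); and lifting the multiplier $(Q_t^\Gamma)^\dagger\eta_k$ through the adjoint for (iii). Your polar-decomposition argument just makes explicit the isometry that the paper attributes to the Douglas lemma, and your check that $\mathcal{L}_t^\Gamma$ is bounded addresses a point the paper leaves implicit.
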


\begin{proof}
We employ the Douglas Lemma \cite{Douglas1966} (see also \cite[Appendix B]{DaPratoZabczyk14}), which characterizes range inclusions and factorizations of operators on Hilbert spaces.

\textit{Proof of \ref{item:min_energy_i}.}
Identifying the operators via the factorization \eqref{eq:gramian_factorization}, the Douglas Lemma implies the identity of ranges:
$
\operatorname{Im}(\mathcal{L}_t^\Gamma) = \operatorname{Im}\left( (\mathcal{L}_t^\Gamma (\mathcal{L}_t^\Gamma)^*)^{1/2} \right) = \operatorname{Im}\left((Q_t^\Gamma)^{1/2}\right).
$
Thus, solvability is equivalent to $\eta_k$ belonging to the range of the square root of the Gramian.

\textit{Proof of \ref{item:min_energy_ii}.}
The set of admissible controls is the affine subspace $\hat{v} + \ker(\mathcal{L}_t^\Gamma)$. The minimum norm solution $\hat{v}$ is the unique element in $\ker(\mathcal{L}_t^\Gamma)^\perp = \overline{\operatorname{Im}((\mathcal{L}_t^\Gamma)^*)}$.
The Douglas Lemma guarantees that the map $\mathcal{L}_t^\Gamma$ restricts to an isometry between $\ker(\mathcal{L}_t^\Gamma)^\perp$ and the range space $\operatorname{Im}((Q_t^\Gamma)^{1/2})$ equipped with the graph norm induced by the inverse operator, i.e.
$$
\|\hat{v}\|_{L^2(0,t;\Xi)} = \inf \{ \|v\| : \mathcal{L}_t^\Gamma v = \eta_k \} =\left\| (Q_t^\Gamma)^{-1/2} \eta_k \right\|_\mathcal{Y}= \left\| (Q_t^\Gamma)^{-1/2} (-\Gamma \overline{S}(t) B k) \right\|_\mathcal{Y} = \| \Lambda^{\Gamma,B}(t)k \|_\mathcal{Y}.
$$
where to obtain the last two equalities we have  used  the definition of the regularization operator in Hypothesis \ref{hyp:partial_smoothing}, $\Lambda^{\Gamma,B}(t)k = (Q_t^\Gamma)^{-1/2} \Gamma \overline{S}(t) B k$.
and the definition of $\eta_k$ from \eqref{eq:target_eta}.

\textit{Proof of \ref{item:min_energy_iii}.}
Assume $\eta_k \in \operatorname{Im}(Q_t^\Gamma)$. Then, there exists a vector $z \in \mathcal{Y}$ (formally $z = (Q_t^\Gamma)^\dagger \eta_k$) such that $\eta_k = \mathcal{L}_t^\Gamma (\mathcal{L}_t^\Gamma)^* z$.
The minimum norm solution is recovered by lifting the multiplier $z$ via the adjoint operator:
$
\hat{v} = (\mathcal{L}_t^\Gamma)^* z.
$
Using the explicit adjoint form {above} and substituting $z = -(Q_t^\Gamma)^\dagger \Gamma \overline{S}(t)Bk$, we obtain \eqref{eq:opt_ctrl_form}.
\end{proof}

\section{$\Gamma$--Smoothing properties of the convolution}\label{sec:smoothing_conv}
In order to solve the HJB equation in Section \ref{sec:hjb} on the extended space $\overline{H}$, it is necessary to preserve regularity through the time convolution $\int_0^t P_{t-s}[\cdot] \mathrm{d}s$. {Therefore, we prove} $\Gamma$--smoothing properties of the convolution. 

\begin{definition}[Smoothing Spaces on $\overline{H}$]
\label{def:Sigma_space}
Let $T>0$ and $\alpha \in (0,1)$. We define the space $\Sigma^1_{T,\alpha}$ as the set of functions $g \in C_b([0,T]\times \overline{H})$ possessing the following structure:
\begin{enumerate}
    \item There exists a function $f \in C^{0,1}_{\alpha}([0,T]\times \mathcal{Y})$;
    \item For any $t \in (0, T]$ and $x \in \overline{H}$, the evaluation is given by $g(t,x)=f(t, \Gamma \overline{S}(t)x)$.
\end{enumerate}
\end{definition}

\begin{remark}
\label{rem:B_grad_unif}
For any $g \in \Sigma^1_{T,1/2}$, the chain rule yields
\begin{equation}
\label{eq:B_grad_explicit}
\nabla^B g(t,x) = (\Gamma \overline{S}(t)B)^* \nabla_{\mathcal{Y}} f(t, \Gamma \overline{S}(t)x), \quad t \in (0,T], \ x \in \overline{H}.
\end{equation}
This representation shows that $\nabla^B g$ inherits the factorization of $g$. Specifically, there exists a bounded continuous map $\bar{f}: (0,T] \times \mathcal{Y} \to K$ such that
\begin{equation}
t^{1/2} \nabla^{B}g(t,x) = \bar f\left(t, \Gamma \overline{S}(t)x\right).
\end{equation}
\end{remark}

\begin{lemma}[Preservation of Regularity on $\overline{H}$]
\label{lemma:convolution_smoothing}
Assume Hypothesis \ref{hyp:partial_smoothing} with $\gamma=1/2$. Let $\psi: K \rightarrow \mathbb{R}$ be a Lipschitz continuous function.
\begin{enumerate}[label=(\roman*)]
\item \label{item:lemma_conv_i} If $g \in \Sigma^1_{T,{1/2}}$, then the convolution $\hat g(t,x) :=\int_{0}^{t} P_{t-s} [\psi(\nabla^{B}g(s,\cdot))](x) \, \mathrm{d}s$ is well-defined for $x \in \overline{H}$ and belongs to $\Sigma^1_{T,{1/2}}$.
\item \label{item:lemma_conv_ii} There exists a constant $C_T>0$ such that:
\begin{equation}\label{eq:conv_grad_est}
\sup_{x \in \overline{H}} \left\vert \nabla ^B(\hat g(t,\cdot))(x) \right\vert_{K} \leq C_T \left(t^{{1/2}}+\Vert g \Vert_{C^{0,1,B}_{{1/2}}}\right).
\end{equation}
\end{enumerate}
This lemma ensures that the fixed-point map of the HJB equation maps the solution space into itself.
\end{lemma}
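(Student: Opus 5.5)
The plan is to use the factorized structure of $g$ (Remark \ref{rem:B_grad_unif}) to write the integrand as a $\Gamma$-cylindrical function, and then apply Theorem \ref{thm:partial_reg} at each fixed $s$.

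\textbf{Step 1: cylindrical form of the integrand.} Since $g\in\Sigma^1_{T,1/2}$, Remark \ref{rem:B_grad_unif} gives $s^{1/2}\nabla^B g(s,x)=\bar f(s,\Gamma\overline S(s)x)$ with $\bar f$ bounded and continuous. Hence
\[
\psi(\nabla^B g(s,x))=\psi\bigl(s^{-1/2}\bar f(s,\Gamma\overline S(s)x)\bigr)=:\Phi_s(\Gamma\overline S(s)x).
\]
By the Lipschitz property of $\psi$,
\[
\sup|\Phi_s|\le|\psi(0)|+L_\psi s^{-1/2}\|g\|_{C^{0,1,B}_{1/2}}.
\]
Consider the transition semigroup. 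For $y\sim\mathcal N(0,Q_{t-s})$,
\[
\overline S(s)\bigl(\overline S(t-s)x+y\bigr)=\overline S(t)x+\overline S(s)y .
\]
Therefore
\[
P_{t-s}[\psi(\nabla^Bg(s,\cdot))](x)=\int_{\mathcal Y}\Phi_s\bigl(\Gamma\overline S(t)x+\xi\bigr)\,\nu_{t,s}(d\xi),
\]
where $\nu_{t,s}$ is the Gaussian law of $\Gamma\overline S(s)y$. Integrating in $s$ gives
\[
\hat g(t,x)=\hat f(t,\Gamma\overline S(t)x),\qquad \hat f(t,z):=\int_0^t\int_{\mathcal Y}\Phi_s(z+\xi)\,\nu_{t,s}(d\xi)\,ds .
\]
This shows that $\hat g$ is well defined. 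The factor $s^{-1/2}$ is integrable, so $\hat f$ is bounded by $C(t+t^{1/2}\|g\|)$. Continuity of $\hat f$ follows by dominated convergence, using continuity of $\bar f$ and weak continuity of $\nu_{t,s}$ in $(t,s)$.

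\textbf{Step 2: $B$-derivative.} Fix $s<t$. Apply Theorem \ref{thm:partial_reg} with time $t-s$ to the function $P_{t-s}[\phi_s]$, where $\phi_s:=\Phi_s\circ\Gamma\overline S(s)$ is $\Gamma$-cylindrical. Strictly, this step requires the observation operator $\Gamma\overline S(s)$. The cleaner route is to note that the derivative in direction $Bk$ at $x$ is the derivative of $\int\Phi_s(\Gamma\overline S(t)x+\cdot)\,d\nu_{t,s}$. So I would directly use the Cameron--Martin argument from the proof of Theorem \ref{thm:partial_reg} for the measure $\nu_{t,s}$ and the shift $\Gamma\overline S(t)Bk=\Gamma\overline S(s)\,\overline S(t-s)Bk$. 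This yields
\[
|\langle\nabla^B P_{t-s}[\psi(\nabla^Bg(s,\cdot))](x),k\rangle|\le\sup|\Phi_s|\,\kappa_0(t-s)^{-1/2}\|k\|,
\]
provided the range inclusion and bound \eqref{eq:Lambda_partial_bound} hold for the pair $(\nu_{t,s},\Gamma\overline S(t)B)$. When $s$ is small this is essentially Hypothesis \ref{hyp:partial_smoothing} with $\gamma=1/2$.

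\textbf{Step 3: estimate and regularity.} Integrating the bound from Step 2 gives
\[
\int_0^t(|\psi(0)|+L_\psi s^{-1/2}\|g\|)\,\kappa_0(t-s)^{-1/2}\,ds=\kappa_0\bigl(2|\psi(0)|t^{1/2}+L_\psi\pi\|g\|\bigr),
\]
where the Beta integral is $B(1/2,1/2)=\pi$. This is exactly \eqref{eq:conv_grad_est}, and in particular $t^{1/2}\nabla^B\hat g$ is bounded. Differentiation under the $ds$ integral is justified by the integrable majorant. To place $\hat g$ in $\Sigma^1_{T,1/2}$, I would also show that $\hat f\in C^{0,1}_{1/2}([0,T]\times\mathcal Y)$. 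This follows from the same Cameron--Martin computation with arbitrary shifts $h\in\mathcal Y$ in the Cameron--Martin space of $\nu_{t,s}$. Continuity of the derivative comes from the BEL-type formula by dominated convergence.

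\textbf{Main obstacle.} The main obstacle is Step 2. The measure $\nu_{t,s}$ is the law of $\Gamma\overline S(s)\mathcal N(0,Q_{t-s})$, which is not $Q^\Gamma_{t-s}$. One therefore needs a range inclusion $\Gamma\overline S(t)B\subseteq\operatorname{Im}(\Gamma\overline S(s)Q_{t-s}\overline S(s)^*\Gamma^*)^{1/2}$ with the $(t-s)^{-1/2}$ bound, uniformly in $s$. I would try to derive this from Theorem \ref{thm:min_energy}. A virtual control on $[0,t-s]$ steering $\Gamma\overline S(t-s)$-observed displacement, propagated by $\overline S(s)$, should give the needed control. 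Failing that, I would add this uniform condition as the precise reading of the hypothesis.
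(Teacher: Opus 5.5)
Your argument follows the paper's proof step for step: factor the integrand through $\Gamma\overline{S}(t)x$, apply Cameron--Martin to the image measure $\nu_{t,s}$ with shift $\Gamma\overline{S}(t)Bk$, and integrate the Beta kernel. The obstacle you isolate in Step 2 is a genuine gap, and the paper's proof does not close it either.

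\textbf{The Step 2 gap.} The paper writes the density of the shifted $\nu_{t,s}$ with exponent $\Lambda^{\Gamma,B}(t-s)k$. In doing so it silently identifies $\mathcal{Q}_{t,s}^{-1/2}\Gamma\overline{S}(t)Bk$ with $(\Gamma Q_{t-s}\Gamma^*)^{-1/2}\Gamma\overline{S}(t-s)Bk$, where $\mathcal{Q}_{t,s}:=\Gamma\overline{S}(s)Q_{t-s}\overline{S}(s)^*\Gamma^*=\int_s^t\Gamma\overline{S}(\sigma)GG^*\overline{S}(\sigma)^*\Gamma^*\,\mathrm{d}\sigma$. Hypothesis \ref{hyp:partial_smoothing} controls only the second quantity.

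Your proposed repair through Theorem \ref{thm:min_energy} cannot work in general. A virtual control that cancels $\Gamma$ of the displacement at time $t-s$ need not cancel $\Gamma\overline{S}(s)$ of it. Here is a concrete example. Take $H=\overline{H}=\mathbb{R}^2$, $A$ the generator of rotations, $\Gamma=e_1^\top$, $G=e_1$, $B=e_2$, and $T\in(\pi/2,\pi)$. Then $\Gamma\overline{S}(\sigma)G=\cos\sigma$ and $\Gamma\overline{S}(\sigma)B=-\sin\sigma$. Moreover $|\Lambda^{\Gamma,B}(\tau)|=|\sin\tau|\,(\int_0^\tau\cos^2\sigma\,\mathrm{d}\sigma)^{-1/2}$ is bounded on $(0,T]$, so the hypothesis holds. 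Yet at $t=\pi/2$ one has $\mathcal{Q}_{t,s}\sim(t-s)^3/3$ while the shift has modulus $1$. The Cameron--Martin norm is therefore of order $(t-s)^{-3/2}$, which is not integrable against $s^{-1/2}$.

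The lemma itself fails here, not just the method. Take $\psi=\mathrm{id}$ and $g\in\Sigma^1_{T,1/2}$ with $\nabla^B g(s,x)=\chi(s)\Phi(\Gamma\overline{S}(s)x)$, where $\chi$ is a cutoff vanishing near $0$. Choose $\Phi$ odd, compactly supported, nonnegative on $(0,\infty)$ and only $\tfrac14$-H\"older at $0$. Then the difference quotients of $\hat g(\pi/2,\cdot)$ at points with $\Gamma\overline{S}(\pi/2)x=0$ diverge.

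So your fallback is the right move: one must assume $\operatorname{Im}(\Gamma\overline{S}(t)B)\subseteq\operatorname{Im}(\mathcal{Q}_{t,s}^{1/2})$ with $\|\mathcal{Q}_{t,s}^{-1/2}\Gamma\overline{S}(t)B\|\le C(t-s)^{-1/2}$ uniformly in $0\le s<t\le T$. In the SVIE application this condition does hold. Run the constant-control argument of Proposition \ref{prop:general_energy_bound} on $[s,t]$: the constraint becomes $g\bar v\int_s^tK(\sigma)\,\mathrm{d}\sigma=-K(t)bk$. Monotonicity of $K$ gives $\int_s^tK\ge(t-s)K(t)$, so the energy is at most $|b/g|\,(t-s)^{-1/2}|k|$.

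\textbf{A smaller gap in Step 3.} Putting $\hat g$ in $\Sigma^1_{T,1/2}$ requires $\hat f(t,\cdot)$ to be Fr\'echet differentiable on all of $\mathcal{Y}$, with $t^{1/2}\nabla\hat f$ bounded. Cameron--Martin only gives derivatives along $\operatorname{Im}(\mathcal{Q}_{t,s}^{1/2})$, which is a proper subspace of $\mathcal{Y}$ when $\dim\mathcal{Y}=\infty$. Even for finite-dimensional $\mathcal{Y}$, no hypothesis bounds $\mathcal{Q}_{t,s}^{-1/2}$ outside $\operatorname{Im}(\Gamma\overline{S}(t)B)$. So your phrase ``arbitrary shifts $h\in\mathcal{Y}$ in the Cameron--Martin space'' does not deliver the claim. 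The paper only observes that $\nabla^B\hat g(t,x)$ depends on $x$ through $\Gamma\overline{S}(t)x$, so it does not address this either. In the scalar SVIE case the issue is harmless. There $\Gamma\overline{S}(t)B=bK(t)\neq0$ spans $\mathcal{Y}=\mathbb{R}$ and $K(t)\ge K(T)>0$, so $\partial_z\hat f$ is recovered from $\nabla^B\hat g$ with a comparable bound.
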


\begin{proof}
We start by proving that $\hat g$ defined in \textit{\ref{item:lemma_conv_i}} is $B$-Fréchet differentiable on $\overline{H}$ and we exhibit its $B$-Fréchet derivative.
Recalling the definition of the semigroup $P_t$, we extend it to act on $\overline{H}$ via $\overline{S}(t)$ and write $\hat g$ as
\begin{small}
\begin{align*}
 &\hat g(t,x) = \int_{0}^{t} P_{t-s} \left[\psi\left(\nabla^{B}(g(s,\cdot))\right)\right](x) \, \mathrm{d}s
 = \int_{0}^{t} \int_{\overline{H}} \psi\left(\nabla^{B}g(s, \overline{S}(t-s)x + y)\right)\mathcal{N}(0,Q_{t-s})(\mathrm{d}y) \, \mathrm{d}s\\
 &=\int_{0}^{t} \int_{\overline{H}} \psi\left(s^{-1/2}\bar f(s,\Gamma \overline{S}(s) \overline{S}(t-s)x +\Gamma \overline{S}(s) y)\right)\mathcal{N}(0,Q_{t-s})(\mathrm{d}y) \, \mathrm{d}s=\int_{0}^{t} \int_{\overline{H}} \psi\left(s^{-1/2}\bar f(s,\Gamma \overline{S}(t)x +\Gamma \overline{S}(s) y)\right)\mathcal{N}(0,Q_{t-s})(\mathrm{d}y) \, \mathrm{d}s.
\end{align*}
\end{small}
where we have used the definition of the space $\Sigma^1_{T,{1/2}}$ (Definition \ref{def:Sigma_space}), i.e. we know that $g(s,z)=f(s, \Gamma \overline{S}(s)z)$, so that there exists a bounded continuous function $\bar f: (0,T] \times \mathcal{Y} \to K$ such that  $s^{1/2} \nabla^{B}g(s,z) = \bar f\left(s, \Gamma \overline{S}(s)z\right), $ $ s \in (0, T], \; $ $ z \in \overline{H}.$ Note also that for $x \in \overline{H}$, $\overline{S}(t-s)x$ lives in $H$ for $s<t$.
Note that, since the integral runs over $s \in (0,t)$, we have $s > 0$. Thus, $\overline{S}(s)$ maps the noise $y \in \overline{H}$ into $H$, allowing the operator $\Gamma$ to be applied. Similarly, $\overline{S}(t)x$ is in $H$ for $x \in \overline{H}$.
To compute the $B$-directional derivative in a direction $k \in K$, we look at the limit of the difference quotient:
\begin{gather*}
 \lim_{\alpha\rightarrow 0}\dfrac{1}{\alpha}
 \left[\hat g(t, x+\alpha B k) - \hat g(t, x)\right].
\end{gather*}
Note that we use the action $B k$ which lands in $\overline{H}$. This is valid as $x \in \overline{H}$.
The perturbation on $x$ propagates to the argument of $\bar{f}$. Specifically, replacing $x$ with $x + \alpha B k$, the argument becomes $\Gamma \overline{S}(s) \left( \overline{S}(t-s)(x + \alpha B k) + y \right) 
= \Gamma \overline{S}(t)x + \alpha \Gamma \overline{S}(t)Bk + \Gamma \overline{S}(s)y.$
Thus, the difference quotient corresponds to the integral:
\begin{gather*}
 \lim_{\alpha\rightarrow 0}\dfrac{1}{\alpha}
 \int_{0}^{t} \int_{\overline{H}} \Bigg[ \psi\left( s^{-1/2} \bar{f}\left(s, \Gamma \overline{S}(t)x + \alpha \Gamma \overline{S}(t)Bk + \Gamma \overline{S}(s)y \right) \right) 
 - \psi\left( s^{-1/2} \bar{f}\left(s, \Gamma \overline{S}(t)x + \Gamma \overline{S}(s)y \right) \right) \Bigg] \mathcal{N}(0,Q_{t-s})(\mathrm{d}y) \,\mathrm{d}s.
\end{gather*}
We now proceed with a change of measure. Let $\nu_{t,s}$ be the image measure of $\mathcal{N}(0, Q_{t-s})$ under the linear map $y \mapsto \Gamma \overline{S}(s)y$. This is a Gaussian measure on $\mathcal{Y}$ with covariance operator $\mathcal{Q}_{t-s}^\Gamma \coloneqq \Gamma \overline{S}(s) Q_{t-s} \overline{S}(s)^* \Gamma^*$.
We identify the shift vector $h = \Gamma \overline{S}(t)Bk$.
By Hypothesis \ref{hyp:partial_smoothing}, the shift satisfies the range inclusion required for the Cameron-Martin theorem (relative to the projected covariance structure at time $t$).
Let $d(t, t-s, \alpha Bk, \xi)$ denote the Radon-Nikodym derivative of the shifted measure with respect to the original measure $\nu_{t,s}$. Explicitly
$d(t, t-s, \alpha Bk, \xi) = \exp\left\{ \alpha \left\langle \Lambda^{\Gamma,B}(t-s)k, (\mathcal{Q}^\Gamma_{t-s})^{-{1/2}}\xi\right\rangle_{\mathcal{Y}}
-\frac{\alpha^2}{2}\left\| \Lambda^{\Gamma,B}(t-s)k \right\|_{\mathcal{Y}}^{2}\right\}.$
Using this density, we can rewrite the first term of the difference quotient as an integral over the unperturbed measure, but multiplied by the density. The limit becomes:
\begin{gather*}
\lim_{\alpha\rightarrow 0} \int_{0}^{t} \int_{\overline{H}} \psi\left( s^{-1/2} \bar{f}\left(s, \Gamma \overline{S}(t)x + \Gamma \overline{S}(s)y \right) \right) \left[ \frac{d(t, t-s, \alpha Bk, \Gamma y) - 1}{\alpha} \right] \mathcal{N}(0,Q_{t-s})(\mathrm{d}y) \,\mathrm{d}s.
\end{gather*}
Differentiating the density with respect to $\alpha$ at $\alpha=0$, i.e.
$\lim_{\alpha\to 0} \frac{d(t,t-s,\alpha Bk, \Gamma y)-1}{\alpha} = \left\langle \Lambda^{\Gamma,B}(t-s)k, (\mathcal{Q}^\Gamma_{t-s})^{-{1/2}}\Gamma y\right\rangle_{\mathcal{Y}},$
we obtain the explicit formula for the derivative:
\begin{gather}
\langle\nabla ^B \hat g(t,x), k \rangle_{K} = \label{eq:derBconvnew_A}
\int_{0}^{t} \int_{\overline{H}}
\psi\left(
s^{-{1/2}} \bar f\left(s, \Gamma \overline{S}(t)x + \Gamma \overline{S}(s)y\right)
\right)
\langle \Lambda^{\Gamma,B}(t-s)k, (\mathcal{Q}^\Gamma_{t-s})^{-{1/2}}\Gamma y \rangle_{\mathcal{Y}}
\mathcal{N}\left(0,Q_{t-s}\right)(\mathrm{d}y)\,\mathrm{d}s.
\end{gather}
This formula confirms that $\nabla^B \hat g(t,x)$ depends on $x \in \overline{H}$ only through $\Gamma \overline{S}(t)x$, preserving the structure of $\Sigma^1_{T,{1/2}}$.

Finally, we prove the estimate \eqref{eq:conv_grad_est}. Using the representation \eqref{eq:derBconvnew_A} and the Hölder inequality with respect to the measure $\mathcal{N}(0, Q_{t-s})$, we have:
\begin{gather*}
\left\vert\langle\nabla ^B \hat g(t,x), k \rangle_{K}\right\vert
\leq \int_{0}^{t}
 \left(\int_{\overline{H}} \left\vert \psi\left(s^{-{1/2}} \bar f\left(s, \Gamma \overline{S}(t)x + \Gamma \overline{S}(s)y\right)\right) \right\vert^2 \mathcal{N}(0,Q_{t-s})(\mathrm{d}y)\right)^{1/2}
 \\
 \quad \times \left(\int_{\overline{H}} \left \vert
\langle \Lambda^{\Gamma,B}(t-s)k, (\mathcal{Q}^\Gamma_{t-s})^{-{1/2}}\Gamma y \rangle_{\mathcal{Y}}
 \right\vert^2
\mathcal{N}(0,Q_{t-s})(\mathrm{d}y) \right)^{1/2} \mathrm{d}s.
\end{gather*}
Using the Lipschitz property of $\psi$ (i.e., $|\psi(z)| \le C_\psi(1+|z|)$) and the norm definition of $g \in \Sigma^1_{T,1/2}$, the first factor is bounded by:
$
\sup_{y \in \overline{H}} \left\vert \psi\left(s^{-{1/2}} \bar f\left(s, \Gamma \overline{S}(t)x + \Gamma \overline{S}(s)y\right)\right) \right\vert \leq C_\psi\left(1 + s^{-1/2}\|g\|_{C^{0,1,B}_{1/2}}\right).$
The second factor equals $\|\Lambda^{\Gamma,B}(t-s)k\|_{\mathcal{Y}}$.
Using Hypothesis \ref{hyp:partial_smoothing} (Eq. \eqref{eq:Lambda_partial_bound}) with $\gamma=1/2$, we get:
\begin{gather*}
\left\vert\langle\nabla ^B \hat g(t,x), k \rangle_{K}\right\vert
 \leq C \int_{0}^{t}\left(1+s^{-{1/2}}\left\Vert g \right\Vert_{C^{0,1,B}_{{1/2}}}\right)
 \left\Vert \Lambda^{\Gamma,B}(t-s) \right\Vert_{\mathcal{L}(K;\mathcal{Y})} |k|_K \,\mathrm{d}s \\
 \leq C \kappa_0 \left[ \int_{0}^{t} (t-s)^{-{1/2}} \,\mathrm{d}s + \left\Vert g \right\Vert_{C^{0,1,B}_{{1/2}}} \int_{0}^{t} s^{-{1/2}}(t-s)^{-{1/2}}\,\mathrm{d}s \right] |k|_K.
\end{gather*}
Computing the time integrals, we have $\int_{0}^{t} (t-s)^{-{1/2}} \mathrm{d}s = 2t^{1/2}$ and
 $\int_0^t s^{-{1/2}}(t-s)^{-{1/2}}\,\mathrm{d}s =\beta \left({1/2},{1/2} \right) = \pi,$
where $\beta(\cdot,\cdot)$ is the Euler Beta function. Thus:
\begin{equation*}
\sup_{x \in \overline{H}} \|\nabla^B \hat g(t,x)\|_K \leq C_T \left(t^{{1/2}} + \Vert g \Vert_{C^{0,1,B}_{{1/2}}} \right),
\end{equation*}
which concludes the proof.
\end{proof}

\section{The Abstract HJB Equation}
\label{sec:hjb}
We study the infinite-dimensional Hamilton-Jacobi-Bellman equation associated with the optimal control problem \eqref{eq:cost_functional}.

Define the Hamiltonian $H_{min}: K \to \R$ by $H_{min}(p) := \inf_{u \in U} \{ \langle p, u \rangle_K + \ell_1(u) \}$.

To correctly handle the singular nature of the control operator $B$, we solve the HJB equation for the value function $v(t,x)$ defined on the extended space $\overline{H}$. The equation is formally written as:
\begin{equation}
\label{eq:HJB_formal}
\left\{\begin{array}{l}\displaystyle
-\frac{\partial v(t,x)}{\partial t}=\mathcal{L} [v(t,\cdot)](x) +
H_{min} (\nabla^B v(t,x)),\quad t\in [0,T],\, x\in \overline{H},\\
v(T,x)=\phi(x),\quad x\in H
\end{array}\right.
\end{equation}
where $\mathcal{L}$ is the generator of the Ornstein-Uhlenbeck process. Note however that, coherently with \eqref{eq:cost_functional} (and our application to SVIEs later), the final datum $\phi$ is only defined on $H$. We interpret this equation in the mild sense on $\overline{H}$.

\begin{definition}[Mild Solution of HJB on $\overline{H}$]
\label{def:mild_sol_HJB}
A function $v:[0,T]\times \overline{H} \rightarrow\mathbb{R}$ is a mild solution of \eqref{eq:HJB_formal} if:
\begin{enumerate}
\item $v(T-\cdot, \cdot) \in C^{0,1,B}_{{1/2}}\left([0,T]\times \overline{H}\right)$, i.e., it is continuous on $\overline{H}$ and has a $B$-directional derivative satisfying the singularity bound $\| \nabla^B v(t,x) \|_K \le C(T-t)^{-1/2}$.
\item It satisfies the integral equation for all $(t,x) \in [0,T]\times \overline{H}$:
\begin{equation}
\label{eq:HJB_mild_int}
v(t,x) = P_{T-t}[\phi](x) + \int_t^T P_{s-t}\left[ H_{min}(\nabla^B v(s,\cdot)) \right](x) \, \mathrm{d}s.
\end{equation}
\end{enumerate}
Note that for $x \in \overline{H}$, $P_{T-t}[\phi](x)$ is well defined because $\overline{S}$ maps $\overline{H}$ to $H$.
\end{definition}
\begin{theorem}[Existence and Uniqueness]
\label{thm:HJB_existence}
Let Hypotheses \ref{hyp:system_data} and \ref{hyp:partial_smoothing} (with $\gamma=1/2$) hold. Assume that the final datum $\phi$ belongs to $B_b^\Gamma(H)$ and is Lipschitz continuous, and that $\ell_1$ is such that the Hamiltonian $H_{min}$ is Lipschitz continuous.

Then the HJB equation \eqref{eq:HJB_mild_int} admits a mild solution $v$ on $\overline{H}$ according to Definition \ref{def:mild_sol_HJB}.
Moreover, $v$ is unique among the functions $w$ such that $w(T-\cdot,\cdot)\in\Sigma^1_{T,1/2}$ and it satisfies, for a suitable $C_T>0$, the estimate:
\begin{equation} \label{eq:v_regularity_est}
\Vert v(T-\cdot,\cdot)\Vert_{C^{0,1,B}_{{1/2}}}\le C_T\Vert\bar\phi \Vert_\infty.
\end{equation}
Finally, if the initial datum $\phi$ is also continuously $B$-Fréchet differentiable, then $v \in C^{0,1,B}_{b}([0,T]\times \overline{H})$ and, for a suitable $C_T>0$,
\begin{equation}\label{eq:v_regularity_est_smooth}
\Vert v\Vert_{C^{0,1,B}_{b}}\le C_T\left(\Vert\phi \Vert_\infty +\Vert\nabla^B\phi \Vert_\infty \right).
\end{equation}
\end{theorem}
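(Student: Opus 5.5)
Since the problem is backward in time, I will work with the forward unknown $w(t,x):=v(T-t,x)$. In this variable the mild formulation \eqref{eq:HJB_mild_int} becomes
\[
w(t,x)=P_t[\phi](x)+\int_0^t P_{t-s}\big[H_{min}(\nabla^B w(s,\cdot))\big](x)\,\mathrm{d}s .
\]
I will look for $w$ as a fixed point of the map
\[
\mathcal{T}(g)(t,x):=P_t[\phi](x)+\int_0^t P_{t-s}\big[H_{min}(\nabla^B g(s,\cdot))\big](x)\,\mathrm{d}s
\]
on the closed subspace $\Sigma^1_{T,1/2}$ of $C^{0,1,B}_{1/2}([0,T]\times\overline{H})$, with the norm from Section \ref{sec:preliminaries}. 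Closedness follows because uniform limits of functions of the form $f_n(t,\Gamma\overline{S}(t)x)$ keep this form on $\operatorname{Im}\Gamma\overline{S}(t)$, up to completing $f$ through uniform convergence of the $f_n$ there.

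\textbf{Step 1: the first term.} Since $\phi=\bar\phi\circ\Gamma$, we have $P_t[\phi](x)=\int_{\mathcal Y}\bar\phi(\Gamma\overline{S}(t)x+\xi)\,\nu_t(\mathrm{d}\xi)$. This equals $f_0(t,\Gamma\overline{S}(t)x)$ with $f_0(t,y):=\int\bar\phi(y+\xi)\,\nu_t(\mathrm{d}\xi)$, so the term has the $\Sigma^1$ structure. Theorem \ref{thm:partial_reg} gives $\|\nabla^B P_t[\phi]\|_K\le\kappa_0 t^{-1/2}\|\bar\phi\|_\infty$, hence
\[
\|P_\cdot[\phi]\|_{C^{0,1,B}_{1/2}}\le C\|\bar\phi\|_\infty .
\]
Continuity in $(t,x)$, including at $t=0$, uses the Lipschitz continuity of $\phi$ and the strong continuity of $\overline{S}$ and of $Q_t$.

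\textbf{Step 2: the convolution term.} Lemma \ref{lemma:convolution_smoothing} with $\psi=H_{min}$ shows that $\mathcal{T}$ maps $\Sigma^1_{T,1/2}$ into itself.

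\textbf{Step 3: contraction.} For $g_1,g_2$ I will apply the computation behind \eqref{eq:derBconvnew_A} to the difference. Because $H_{min}$ is Lipschitz, $|H_{min}(p)-H_{min}(q)|\le L|p-q|$, and therefore
\[
t^{1/2}\big\|\nabla^B(\mathcal{T}g_1-\mathcal{T}g_2)(t,x)\big\|_K\le L\kappa_0\,t^{1/2}\int_0^t (t-s)^{-1/2}s^{-1/2}\,\mathrm{d}s\,\sup_s s^{1/2}\|\nabla^B(g_1-g_2)(s)\|_K .
\]
The sup-norm of the difference is bounded by $Lt^{1/2}\cdot 2\,\|g_1-g_2\|$. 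These constants do not shrink with $t$, because the Beta integral equals $\pi$ and the prefactor $t^{1/2}$ does not help uniformly. I will therefore restore a contraction by one of two devices: an equivalent weighted norm $\sup_t e^{-\lambda t}(\cdots)$ with $\lambda$ large, using
\[
\int_0^t e^{-\lambda(t-s)}(t-s)^{-1/2}s^{-1/2}\,\mathrm{d}s\to 0 \quad\text{uniformly as }\lambda\to\infty,
\]
or by iterating on small intervals $[0,\delta]$, $[\delta,2\delta]$, and so on. The Banach fixed-point theorem then gives existence, and uniqueness in $\Sigma^1_{T,1/2}$. The estimate \eqref{eq:v_regularity_est} follows by a Gronwall/Volterra-type argument applied to $a(t):=\sup_x t^{1/2}\|\nabla^B w(t,x)\|$, which yields a bound by $C\|\bar\phi\|_\infty+C\,\|H_{min}(0)\|$. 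The latter contribution gets absorbed into $C_T$, or one may normalize $H_{min}(0)=0$.

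\textbf{Step 4: the smooth case.} If $\phi$ is $B$-Fréchet differentiable, I will differentiate $P_t[\phi](x)=\mathbb{E}\,\phi(\overline{S}(t)x+\cdot)$ directly, without the Cameron--Martin weight. This gives $\|\nabla^B P_t\phi\|\le C\|\nabla^B\phi\|_\infty$, using that $\overline{S}(t)B$ commutes appropriately; this needs the bound $\|\nabla\phi\,\overline{S}(t)B\|\lesssim\|\nabla^B\phi\|$, which is where I expect care to be needed. Rerunning the fixed point in the unweighted space $C^{0,1,B}_b$, with integrable kernel $(t-s)^{-1/2}$, then gives \eqref{eq:v_regularity_est_smooth}.

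The main obstacle I expect is Step 3. Lemma \ref{lemma:convolution_smoothing} only gives affine bounds, and the singular Beta kernel produces an $O(1)$ constant, so the weighted norm (or the stepping argument) is essential. I would also need to check that the $\Gamma$-structure is preserved under the limit in $\Sigma^1$.
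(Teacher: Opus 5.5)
Your architecture is the paper's: the forward variable $w=v(T-\cdot,\cdot)$, a Banach fixed point for the same map on $\Sigma^1_{T,1/2}$, and an extension to arbitrary $T$ by subdivision or an exponential weight. Theorem \ref{thm:partial_reg} handles $P_t[\phi]$, Lemma \ref{lemma:convolution_smoothing} handles the convolution, and you rerun the argument in $C^{0,1,B}_b$ for smooth data.

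In Step 3, however, your diagnosis is backwards and the fact you rely on is false. Your own bounds give Lipschitz constants $2LT^{1/2}$ and $\pi L\kappa_0T^{1/2}$. So the prefactor $t^{1/2}$ is exactly what produces a contraction for small $T$, which is the paper's argument. Conversely, substituting $s=t\sigma$ gives $\int_0^t e^{-\lambda(t-s)}(t-s)^{-1/2}s^{-1/2}\,\mathrm{d}s=\int_0^1 e^{-\lambda t(1-\sigma)}(1-\sigma)^{-1/2}\sigma^{-1/2}\,\mathrm{d}\sigma$. This tends to $\pi$ as $t\to0$ for every fixed $\lambda$, so there is no uniform decay, and an exponential weight alone does nothing. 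What is true (split at $s=t/2$) is $\sup_{0<t\le T}t^{1/2}\int_0^t e^{-\lambda(t-s)}(t-s)^{-1/2}s^{-1/2}\,\mathrm{d}s\le C\lambda^{-1/2}$. This is the scaling the paper invokes, and it uses precisely the weight you discarded. With that repair, existence and uniqueness go through.

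Your Gronwall route to \eqref{eq:v_regularity_est} is a fine alternative to the paper's absorption argument, and you are right that it produces a $|H_{min}(0)|$ term. The paper's chain contains the same affine term $C_TT^{1/2}$ and silently drops it. In fact \eqref{eq:v_regularity_est} is false as stated for $\phi\equiv0$, $\ell_1\equiv1$, where $v=T-t$. That term cannot be ``absorbed into $C_T$''. The honest conclusion is either a bound by $C_T(\Vert\bar\phi\Vert_\infty+|H_{min}(0)|)$ or the normalization $H_{min}(0)=0$.

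Step 4 is a genuine gap, and you located it correctly. $\nabla^B P_t[\phi]$ differentiates $\phi$ along $\overline{S}(t)Bk$, not along $Bk$, and nothing in Hypotheses \ref{hyp:system_data}--\ref{hyp:partial_smoothing} relates the two. For $\phi=\bar\phi\circ\Gamma$ with $\bar\phi\in C^1_b$, the natural bound is $\Vert\Gamma\overline{S}(t)B\Vert\,\Vert\nabla\bar\phi\Vert_\infty$. This is uniform on $(0,T]$ only under an extra hypothesis such as $\sup_{t\le T}\Vert\Gamma\overline{S}(t)B\Vert<\infty$.

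That hypothesis fails in the SVIE lift, where $\Gamma\overline{S}(t)B=K(t)b$ blows up at $t=0$. There $B$-differentiability of $\phi$ is not even defined, since $\phi$ lives on $H$ and $\operatorname{Im}B\not\subset H$. The paper does not close this either: it appeals to Theorem \ref{thm:partial_reg}, which yields only the singular bound $\kappa_0t^{-1/2}\Vert\phi\Vert_\infty$. So the final assertion needs an additional assumption. Your proof should state one explicitly instead of appealing to $\overline{S}(t)B$ ``commuting appropriately''.
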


\begin{proof}
We use a fixed point argument in $\Sigma^1_{T,{1/2}}$ (which consists of functions on $\overline{H}$). To this aim, first we rewrite \eqref{eq:HJB_mild_int} in a forward way. Namely, if $v$ satisfies \eqref{eq:HJB_mild_int} then, setting $w(t,x):=v(T-t,x)$ for any $(t,x)\in[0,T]\times \overline{H}$, we get that $w$ satisfies the mild form of the forward HJB equation:
\begin{equation}
 w(t,x) = P_{t}[\phi](x)+\int_0^t P_{t-s}\left[ H_{min}(\nabla^B w(s,\cdot)) \right](x) \, \mathrm{d}s, \qquad t\in [0,T], \ x\in \overline{H}.
 \label{solmildHJB-forward_A}
\end{equation}
Define the map $\mathcal{K}$ on $\Sigma^1_{T,{1/2}}$ by setting, for $g\in \Sigma^1_{T,{1/2}}$,
\begin{equation}\label{mappaK_A}
 \mathcal{K}(g)(t,x):= P_{t}[\phi](x)+\int_0^t P_{t-s}\left[ H_{min}(\nabla^B g(s,\cdot)) \right](x) \, \mathrm{d}s, \qquad t\in [0,T].
\end{equation}
By Theorem \ref{thm:partial_reg} and Lemma \ref{lemma:convolution_smoothing}-\ref{item:lemma_conv_i} we deduce that $\mathcal{K}$ is well defined in $\Sigma^1_{T,{1/2}}$ and takes its values in $\Sigma^1_{T,{1/2}}$. Since $\Sigma^1_{T,{1/2}}$ is a closed subspace of the Banach space $C^{0,1,B}_{{1/2}}([0,T]\times \overline{H})$, once we have proved that $\mathcal{K}$ is a contraction, by the Contraction Mapping Principle there exists a unique (in $\Sigma^1_{T,{1/2}}$) fixed point of the map $\mathcal{K}$, which gives a mild solution of \eqref{eq:HJB_formal}.

Let $g_1,g_2 \in \Sigma^1_{T,{1/2}}$. We evaluate $\Vert \mathcal{K}(g_1)-\mathcal{K}(g_2)\Vert_{\Sigma_{T,{1/2}}}$.
First, we estimate the uniform norm difference. Arguing as in the proof of Lemma \ref{lemma:convolution_smoothing}, and using the Lipschitz continuity of $H_{min}$ with constant $L$, we have for every $(t,x)\in [0,T]\times \overline{H}$:
\begin{gather*}
 \vert \mathcal{K}(g_1)(t,x)- \mathcal{K}(g_2)(t,x) \vert =\left\vert \int_0^t P_{t-s}\left[H_{min}\left(\nabla^B g_1(s,\cdot)\right) - H_{min}\left(\nabla^B g_2(s,\cdot)\right)\right](x)\,\mathrm{d}s\right\vert\\
 \le \int_0^t s^{-{1/2}} L \sup_{y \in \overline{H}}\vert s^{{1/2}}\nabla^B (g_1-g_2)(s,y)\vert_K \,\mathrm{d}s
 \leq 2Lt^{{1/2}}\Vert g_1-g_2 \Vert_{C^{0,1,B}_{{1/2}}}.
\end{gather*}
Similarly, regarding the weighted gradient norm, arguing exactly as in the proof of estimate \eqref{eq:conv_grad_est} (Lemma \ref{lemma:convolution_smoothing}), we get:
\begin{gather*}
t^{{1/2}}\vert \nabla^B\mathcal{K}(g_1)(t,x) - \nabla^B\mathcal{K}(g_2)(t,x) \vert_K 
= t^{{1/2}}\left\vert \nabla^B\int_0^t P_{t-s}\left[H_{min} \left(\nabla^B g_1(s,\cdot)\right)-H_{min}\left(\nabla^B g_2(s,\cdot)\right)\right](x)\,\mathrm{d}s\right\vert_K \\
 \leq t^{{1/2}} C_T L \Vert g_1-g_2 \Vert_{C^{0,1,B}_{{1/2}}} \int_0^t (t-s)^{-{1/2}} s^{-{1/2}} \,\mathrm{d}s 
\le t^{{1/2}} L \beta\left({1/2} , {1/2}\right) C_T \Vert g_1-g_2 \Vert_{C^{0,1,B}_{{1/2}}}.
\end{gather*}
Hence, if $T$ is sufficiently small, summing the two estimates we get
\begin{equation}\label{stima-contr_A}
 \left\Vert \mathcal{K}(g_1)-\mathcal{K}(g_2)\right\Vert _{C^{0,1,B}_{{1/2}}} \leq C \left\Vert g_1-g_2\right\Vert _{C^{0,1,B}_{{1/2}}}
\end{equation}
with $C<1$. So the map $\mathcal{K}$ is a contraction in $\Sigma^1_{T,{1/2}}$ and, denoting by $w$ its unique fixed point, $v:=w(T-\cdot,\cdot)$ is the unique mild solution of the HJB equation on $\overline{H}$.

Since the Lipschitz constant $L$ is independent of $t$, the case of generic $T>0$ follows by dividing the interval $[0,T]$ into a finite number of subintervals of length $\delta$ sufficiently small, or equivalently, by taking an equivalent norm with an adequate exponential weight.

The estimate \eqref{eq:v_regularity_est} follows from Theorem \ref{thm:partial_reg} and Lemma \ref{lemma:convolution_smoothing}. Specifically, since $w$ is the fixed point of $\mathcal{K}$, we have $w = \mathcal{K}(w)$. Thus, taking the norm in $\Sigma^1_{T,1/2}$:
\begin{gather*}
\Vert w\Vert_{C^{0,1,B}_{{1/2}}} \leq \Vert P_\cdot[\phi]\Vert_{C^{0,1,B}_{{1/2}}} + \left\Vert \int_0^\cdot P_{\cdot-s}[H_{min}(\nabla^B w(s,\cdot))] \, \mathrm{d}s \right\Vert_{C^{0,1,B}_{{1/2}}}.
\end{gather*}
By Theorem \ref{thm:partial_reg}, the first term satisfies $\Vert P_\cdot[\phi]\Vert_{C^{0,1,B}_{{1/2}}} \le C_T\Vert\bar\phi \Vert_\infty$.
For the integral term, we use the linear growth assumption on the Hamiltonian (Hypothesis \ref{hyp:system_data}), i.e., $|H_{min}(p)| \le L(1+|p|)$. Applying the convolution estimate \eqref{eq:conv_grad_est} from Lemma \ref{lemma:convolution_smoothing} to the function $\psi(p) = H_{min}(p)$, we obtain:
$\left\Vert \int_0^\cdot P_{\cdot-s}[H_{min}(\nabla^B w)] \, \mathrm{d}s \right\Vert_{C^{0,1,B}_{{1/2}}}
\le C_T \left( T^{1/2} + \Vert w \Vert_{C^{0,1,B}_{{1/2}}} \right),$
where the constant $C_T$ comes from the Beta function integral (as seen in the proof of Lemma \ref{lemma:convolution_smoothing}).
Combining these estimates, we get:
\[
\Vert w \Vert_{C^{0,1,B}_{{1/2}}} \le C_T\Vert\bar\phi \Vert_\infty + C_T T^{1/2} + \tilde{C}_T T^{1/2} \Vert w \Vert_{C^{0,1,B}_{{1/2}}}.
\]
For $T$ sufficiently small, the term $\tilde{C}_T T^{1/2} \Vert w \Vert$ can be absorbed into the left-hand side, yielding \eqref{eq:v_regularity_est}. To extend the result to arbitrary $T>0$, we equip $\Sigma^1_{T,1/2}$ with the equivalent norm $\|w\|_{\lambda, \Sigma}$ weighted by $e^{-\lambda t}$. Exploiting the properties of the Laplace transform on the singular convolution kernel, the integral term scales with $\lambda^{-1/2}$. Consequently, choosing $\lambda$ sufficiently large ensures the contraction property globally, yielding:
\[
\|w\|_{\lambda, \Sigma} \le \left(1 - C \lambda^{-1/2}\right)^{-1} C_T \|\bar{\phi}\|_\infty.
\]
The final estimate \eqref{eq:v_regularity_est} follows immediately from the equivalence of the weighted and standard norms on the compact interval $[0,T]$.

Finally, the proof of the last statement follows observing that, if $\phi$ is continuously $B$-Fréchet differentiable, then $P_t[\phi]$ is continuously $B$-Fréchet differentiable with $\nabla^B P_t[\phi]$ bounded in $[0,T]\times \overline{H}$, see Theorem \ref{thm:partial_reg}.
This allows to perform the fixed point argument exactly as done in the first part of the proof, but in the space $C^{0,1,B}_b([0,T]\times \overline{H})$ and to prove estimate \eqref{eq:v_regularity_est_smooth}.
\end{proof}

\begin{theorem}[Regularity]
\label{thm:value_regularity}
Let $v$ be the unique mild solution to the HJB equation \eqref{eq:HJB_mild_int} on $\overline{H}$. Assume Hypotheses \ref{hyp:system_data} and \ref{hyp:partial_smoothing} hold with singularity exponent $\gamma = 1/2$.
\begin{enumerate}[label=(\roman*)]
\item \label{item:val_reg_i}
If the final cost $\phi$ is continuously Fréchet differentiable with bounded derivative ($\phi \in C^1_b(H) \cap B^\Gamma_b(H)$) and $\nabla \phi$ is Lipschitz continuous, then the value function $v$ belongs to the space $\Sigma^2_{T,1/2}$ (generalized to $\overline{H}$). Consequently, the second-order derivatives $\nabla^B \nabla v$ and $\nabla \nabla^B v$ exist and coincide.
Moreover, there exists a constant $C>0$ such that, for all $(t,x) \in [0,T)\times \overline{H}$:
\begin{gather}
\label{stimanablav}
\vert \nabla v(t,x)\vert \leq C \Vert \nabla \phi\Vert_\infty, \\
\label{stimanablav^2}
\vert \nabla^B\nabla v(t,x)\vert_{\mathcal{L}(\overline{H},K)} \leq C (T-t)^{-{1/2}} \Vert \nabla \phi\Vert_\infty.
\end{gather}

\item \label{item:val_reg_ii}
If $\phi$ is merely bounded and continuous ($\phi \in B^\Gamma_b(H)$), then the function $(t,x)\mapsto (T-t)^{1/2}v(t,x)$ belongs to $\Sigma^2_{T,{1/2}}$.
Moreover, there exists a constant $C>0$ such that:
\begin{gather}
\label{stimanablavreg}
\vert \nabla v(t,x)\vert \leq C (T-t)^{-1/2} \Vert \phi\Vert_\infty, \\
\label{stimanablav^2reg}
\vert \nabla^B\nabla v(t,x)\vert_{\mathcal{L}(\overline{H},K)} \leq C (T-t)^{-1} \Vert \phi\Vert_\infty.
\end{gather}
\end{enumerate}
\end{theorem}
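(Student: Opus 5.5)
Starting from the forward mild formulation \eqref{solmildHJB-forward_A}, $w(t,x)=v(T-t,x)$ is the fixed point of $\mathcal{K}$. I would first differentiate this identity in the full direction $h\in\overline{H}$. The driving structural fact is the factorization $P_t[\psi](x)=\int\psi(\overline{S}(t)x+y)\,\mathcal{N}(0,Q_t)(\mathrm{d}y)$. A Fréchet derivative in $x$ therefore acts only on the deterministic shift, giving
\[
\langle\nabla P_t[\psi](x),h\rangle=\int\langle\nabla\psi(\overline{S}(t)x+y),\overline{S}(t)h\rangle\,\mathcal{N}(0,Q_t)(\mathrm{d}y).
\]
In particular the plain gradient commutes with $P_t$ up to composition with $\overline{S}(t)$.

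\textbf{Case (i).} I would consider the pair $(w,\nabla w)$ and run the fixed point of Theorem \ref{thm:HJB_existence} in the smaller space $\Sigma^2_{T,1/2}$. This space consists of functions in $\Sigma^1_{T,1/2}$ whose gradient is bounded and whose mixed derivative $\nabla^B\nabla$ carries a weight $t^{1/2}$. Differentiating $\mathcal{K}(g)$ gives $\nabla P_t[\phi](x)=P_t[\nabla\phi\circ\cdot\,]\,\overline{S}(t)$, bounded by $\overline{M}e^{\overline\omega T}\|\nabla\phi\|_\infty$. The convolution term is handled by differentiating under the integral. The chain rule gives $\nabla[H_{min}(\nabla^B g(s,\cdot))]=DH_{min}(\nabla^B g)\,\nabla\nabla^B g$. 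Since $H_{min}$ is only Lipschitz, I would first mollify it, or use the $\Gamma$-smoothing trick: put the derivative on the kernel via the analogue of formula \eqref{eq:derBconvnew_A} with the direction $\overline{S}(t-s)h$. The resulting bound is $\int_0^t s^{-1/2}\,\mathrm{d}s$ times the $\Sigma^2$-norm, which yields \eqref{stimanablav}.

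For the mixed derivative $\nabla^B\nabla w$, I would apply Theorem \ref{thm:partial_reg} to the function $\langle\nabla\phi(\cdot),\overline{S}(t)h\rangle$. This function is still $\Gamma$-cylindrical because $\phi=\bar\phi\circ\Gamma$, so $\nabla\phi=\Gamma^*\nabla\bar\phi(\Gamma\cdot)$. This gives a factor $\|\Lambda^{\Gamma,B}(t)\|\le\kappa_0t^{-1/2}$ times $\|\nabla\phi\|_\infty$. For the convolution, the computation of Lemma \ref{lemma:convolution_smoothing} gives the singular integral $\int_0^t(t-s)^{-1/2}s^{-1/2}\,\mathrm{d}s=\pi$, which produces \eqref{stimanablav^2}. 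Symmetry $\nabla^B\nabla=\nabla\nabla^B$ follows because both are obtained by differentiating the same Gaussian integral in two directions. The Cameron–Martin density depends only on $y$ and $t$, not on $x$, so the $x$-derivative and the $B$-shift commute, and continuity in $x$ allows interchanging the limits. Contraction for small $T$, followed by the exponential-weight extension, proceeds exactly as in Theorem \ref{thm:HJB_existence}.

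\textbf{Case (ii).} When $\phi$ is only bounded, $\nabla P_t[\phi]$ is not available from $\nabla\phi$. I would obtain it instead by a further smoothing step. Write $P_t=P_{t/2}P_{t/2}$. The factor $P_{t/2}$ applied to a $\Gamma$-cylindrical function gives a $B$-derivative with bound $t^{-1/2}$, and the full gradient is obtained from the $\Gamma$-smoothing of the observed component. This yields \eqref{stimanablavreg}, and one more application of Theorem \ref{thm:partial_reg} yields \eqref{stimanablav^2reg} with $(T-t)^{-1}$. The weighted function $(T-t)^{1/2}v$ then fits into $\Sigma^2_{T,1/2}$.

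\textbf{Main obstacle.} I expect the main difficulty to be precisely this full-gradient step in case (ii), together with the Lipschitz-only $H_{min}$ in the chain rule. Hypothesis \ref{hyp:partial_smoothing} gives smoothing only in directions $\Gamma\overline{S}(t)B$, not in arbitrary directions $\Gamma\overline{S}(t)h$. I would first try to use that $\nabla v$ enters only through $\Gamma^*$ and the cylindrical structure. If that fails, I would impose the analogous range inclusion for $\Gamma\overline{S}(t)$ with the same $t^{-1/2}$ rate. For the Hamiltonian, I would approximate it by smooth Lipschitz $H_n$ and pass to the limit using the uniform estimates above.
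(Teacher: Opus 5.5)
Your part (i) follows the paper's proof essentially step by step. The steps are: a contraction for $\mathcal{K}$ in $\Sigma^2_{T,1/2}$; the chain-rule bound on $\nabla\mathcal{K}(g)$ producing $\int_0^t s^{-1/2}\,\mathrm{d}s$; Theorem~\ref{thm:partial_reg} applied to the $\Gamma$-cylindrical function $\langle\nabla\phi(\cdot),\overline{S}(t)h\rangle$ for the mixed derivative; the Beta integral for the convolution; and an exponential weight to remove the small-$T$ restriction. Your mollification of $H_{min}$ is, if anything, more careful than the paper, which writes $\nabla H_{min}$ as if it existed. The alternative you mention, putting the derivative on the kernel along $\overline{S}(t-s)h$, is not available, for the reason given below. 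One caveat you share with the paper: the bound $\|\nabla P_t[\phi](x)\|\le\overline{M}e^{\overline{\omega}T}\|\nabla\phi\|_\infty$ in directions $h\in\overline{H}$ does not follow from the $\mathcal{L}(\overline{H})$ bound on $\overline{S}(t)$. Since $\nabla\phi=\Gamma^*\nabla\bar\phi(\Gamma\,\cdot)$, you actually need $\sup_{t\le T}\|\Gamma\overline{S}(t)\|_{\mathcal{L}(\overline{H},\mathcal{Y})}<\infty$. In the SVIE lift this norm equals $\bigl(\int e^{-2tx}(1+x)^{-\eta'}\bar\mu(\mathrm{d}x)\bigr)^{1/2}$, which blows up as $t\to0$ because $\eta'<\eta_*$.

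The genuine gap is in part (ii), and it has two layers.

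\emph{First layer.} The step ``the full gradient is obtained from the $\Gamma$-smoothing of the observed component'' is not supported by the hypotheses. Hypothesis~\ref{hyp:partial_smoothing} only controls $(Q_t^\Gamma)^{-1/2}\Gamma\overline{S}(t)B$, so Theorem~\ref{thm:partial_reg} yields $\nabla^B P_t[\phi]$ and never a Fréchet derivative along a general $h\in\overline{H}$. That would require $\operatorname{Im}(\Gamma\overline{S}(t))\subseteq\operatorname{Im}((Q_t^\Gamma)^{1/2})$ with a $t^{-1/2}$ rate. You flag this yourself, but adding that assumption proves a different theorem. The paper's own proof makes the same unjustified appeal (``global smoothing of $P_t$ on $\overline{H}$ (Theorem~\ref{thm:partial_reg})''). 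So you have located the real weak point, but neither argument closes it under the stated hypotheses.

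\emph{Second layer.} Even granting full-gradient smoothing, your split $P_t=P_{t/2}P_{t/2}$ only handles the linear term $P_{T-t}[\phi]$. The convolution term is left untreated, and rerunning the part (i) fixed point with the weaker weights fails. In the chain-rule term $\int_0^t\|\nabla^B\nabla w(s,\cdot)\|\,\mathrm{d}s$ the integrand now behaves like $s^{-1}$, so the integral diverges. The paper sidesteps this by restarting the nonlinear equation. Set $\phi^\epsilon:=v(T-\epsilon,\cdot)$; then $v$ restricted to $[0,T-\epsilon]$ is the mild solution with terminal datum $\phi^\epsilon$. Part (i) therefore applies with $\|\nabla\phi^\epsilon\|_\infty\lesssim\epsilon^{-1/2}\|\phi\|_\infty$, and choosing $\epsilon=(T-t)/2$ gives the rates $(T-t)^{-1/2}$ and $(T-t)^{-1}$. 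This reduction to part (i) is the idea missing from your plan for the nonlinear part. Note, however, that the bound on $\nabla\phi^\epsilon$ is exactly where the first layer re-enters.
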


\begin{proof}
We start by proving (i) by applying the Contraction Mapping Theorem in the space $\Sigma^2_{T,{1/2}}$.
Recall that $\Sigma^2_{T,{1/2}}$ is the space of functions $g \in C^{0,1}_{b}([0,T] \times \overline{H})$ such that $\nabla^B \nabla g$ exists, is continuous, and satisfies a weighted growth condition.
We consider the map $\mathcal{K}$ defined above. By Lemma \ref{lemma:convolution_smoothing} (extended to second derivatives), $\mathcal{K}$ maps $\Sigma^2_{T,{1/2}}$ into itself.

We verify the contraction property by estimating the derivatives. Let $g \in \Sigma^2_{T,{1/2}}$.
First, for the gradient $\nabla \mathcal{K}(g)$, using the differentiability of the semigroup (Theorem \ref{thm:partial_reg}) and differentiating under the integral sign:
\begin{gather}
\nonumber
\vert\nabla \mathcal{K} (g)(t,x) \vert \le \left\vert \nabla P_{t}[\phi](x) \right\vert + \left\vert \nabla\int_0^t P_{t-s}\left[H_{min}\left(\nabla^B g(s,\cdot)\right)\right](x)\,\mathrm{d}s \right\vert\\[2mm]
\nonumber
\le M \Vert \nabla\phi\Vert_\infty + M \int_0^t \left\Vert\nabla \left( H_{min}\left(\nabla^B g(s,\cdot)\right) \right) \right\Vert_\infty \,\mathrm{d}s
= M \Vert \nabla\phi\Vert_\infty + M \int_0^t \left\Vert \nabla H_{min}(\nabla^B g) \nabla\nabla^B g(s,\cdot)\right\Vert_\infty \,\mathrm{d}s \nonumber \\
 \leq M \Vert \nabla\phi\Vert_\infty + M L_H \int_0^t \Vert \nabla^B \nabla g(s,\cdot) \Vert_{\mathcal{L}(\overline{H},K)} \,\mathrm{d}s,
\label{eq:stimadersecnew1}
\end{gather}
where $M = \sup_{t \in [0,T]} \|\overline{S}(t)\|$ and we used the identity $\nabla \nabla^B g = \nabla^B \nabla g$ valid in $\Sigma^2$.

Second, for the mixed derivative $\nabla^B \nabla \mathcal{K}(g)$, we use the smoothing property (Theorem \ref{thm:partial_reg}) on the initial term and the convolution smoothing (analogous to Lemma \ref{lemma:convolution_smoothing}) on the integral term:
\begin{gather}
\nonumber
t^{{1/2}}\vert \nabla^B\nabla\mathcal{K} (g)(t,x)\vert_{\mathcal{L}(\overline{H},K)}
\le t^{{1/2}} \left\vert \nabla^B P_{t}[\nabla \phi](x) \right\vert + t^{{1/2}} \left\vert \nabla^B \int_0^t P_{t-s}\left[ \nabla \left( H_{min}\left(\nabla^B g(s,\cdot)\right) \right) \right](x)\,\mathrm{d}s\right\vert
\\[2mm]
\nonumber
 \leq C \Vert \nabla\phi\Vert_\infty + C t^{{1/2}} \int_0^t (t-s)^{-{1/2}} \left\Vert \nabla H_{min}(\nabla^B g) \nabla^B \nabla g(s,\cdot) \right\Vert_\infty \,\mathrm{d}s \\[2mm]
 \leq C \Vert \nabla\phi\Vert_\infty + C L_H t^{{1/2}} \int_0^t (t-s)^{-{1/2}} \Vert \nabla^B \nabla g(s,\cdot) \Vert_{\mathcal{L}(\overline{H},K)} \,\mathrm{d}s.
\label{eq:stimadersecnew2}
\end{gather}
Let $g_1, g_2 \in \Sigma^2_{T,1/2}$. By taking the difference $\mathcal{K}(g_1) - \mathcal{K}(g_2)$ and using the estimates above (replacing linear terms with differences), we observe that the term $\Vert \nabla^B \nabla (g_1 - g_2) \Vert$ appears inside the integral with a kernel $(t-s)^{-1/2}$.
Specifically, defining the norm $\|g\|_{2,\lambda} = \sup_t e^{-\lambda t} (\|\nabla g\| + t^{1/2}\|\nabla^B \nabla g\|)$, and choosing $\lambda$ sufficiently large (or $T$ small), the map $\mathcal{K}$ becomes a contraction.
Let $w$ be the unique fixed point. The a priori estimates \eqref{stimanablav} and \eqref{stimanablav^2} follow directly from \eqref{eq:stimadersecnew1} and \eqref{eq:stimadersecnew2} by applying the generalized Gronwall lemma (since the forcing terms depend only on $\|\nabla \phi\|_\infty$).

We now prove (ii).
Let $v$ be the mild solution of \eqref{eq:HJB_mild_int} (which exists by Theorem \ref{thm:HJB_existence}). For any $\epsilon \in (0, T)$, define $\phi^\epsilon(x) := v(T-\epsilon, x)$.
By the semigroup property, $v$ restricted to $[0, T-\epsilon]$ is the unique mild solution of the HJB equation with terminal condition $\phi^\epsilon$ at time $T-\epsilon$.
From Theorem \ref{thm:HJB_existence} (specifically estimate \eqref{eq:v_regularity_est}), we know that $\phi^\epsilon$ is $B$-differentiable and:
\begin{equation}
\Vert \nabla^B \phi^\epsilon \Vert_\infty = \Vert \nabla^B v(T-\epsilon, \cdot) \Vert_\infty \leq C_T \epsilon^{-1/2} \Vert \phi \Vert_\infty.
\end{equation}
However, to apply Part (i), we need $\phi^\epsilon$ to be fully Fréchet differentiable ($\nabla \phi^\epsilon$).
We observe that:
\begin{equation}
v(T-\epsilon, x) = P_\epsilon[\phi](x) + \int_{T-\epsilon}^T P_{s-(T-\epsilon)}\left[ H_{min}(\nabla^B v(s,\cdot)) \right](x) \,\mathrm{d}s.
\end{equation}
The term $P_\epsilon[\phi]$ is Fréchet differentiable for any $\epsilon > 0$ thanks to the global smoothing of $P_t$ on $\overline{H}$ (Theorem \ref{thm:partial_reg}), with gradient bounded by:
\begin{equation}
\Vert \nabla P_\epsilon[\phi] \Vert_\infty \leq C \epsilon^{-1/2} \Vert \phi \Vert_\infty.
\end{equation}
The integral term inherits this regularity. Thus, $\phi^\epsilon$ satisfies the assumptions of Part (i) with $\Vert \nabla \phi^\epsilon \Vert_\infty \leq C \epsilon^{-1/2} \Vert \phi \Vert_\infty.$
Applying the estimates from Part (i) to the problem on $[0, T-\epsilon]$:
\begin{gather}
\vert \nabla v(t,x) \vert \leq C \Vert \nabla \phi^\epsilon \Vert_\infty \leq C \epsilon^{-1/2} \Vert \phi \Vert_\infty, \\
\vert \nabla^B \nabla v(t,x) \vert \leq C (T-\epsilon - t)^{-1/2} \Vert \nabla \phi^\epsilon \Vert_\infty \leq C (T-\epsilon - t)^{-1/2} \epsilon^{-1/2} \Vert \phi \Vert_\infty.
\end{gather}
Since $\epsilon$ is arbitrary, we can choose $\epsilon = (T-t)/2$ (so that $t$ is far from the terminal time $T$). Substituting this into the estimates:
\begin{gather}
\vert \nabla v(t,x) \vert \leq C \left(\frac{T-t}{2}\right)^{-1/2} \Vert \phi \Vert_\infty = C' (T-t)^{-1/2} \Vert \phi \Vert_\infty, \\
\vert \nabla^B \nabla v(t,x) \vert \leq C \left(\frac{T-t}{2}\right)^{-1/2} \left(\frac{T-t}{2}\right)^{-1/2} \Vert \phi \Vert_\infty = C' (T-t)^{-1} \Vert \phi \Vert_\infty.
\end{gather}
This proves estimates \eqref{stimanablavreg} and \eqref{stimanablav^2reg}.
Finally, the fact that $(T-t)^{1/2}v \in \Sigma^2_{T,1/2}$ follows from the derived bounds on the second derivatives.
\end{proof}

\section{Verification theorem}
\label{sec:strong_solutions}

In this section, we introduce the notion of $\mathcal{K}$-strong solutions. This concept serves as a rigorous bridge between the mild solutions derived in Section \ref{sec:hjb} and the analytical requirements of Verification Theorems, bypassing the lack of full $C^{1,2}$-regularity inherent to infinite-dimensional problems with singular controls. This concept of solution follows the standard methodology for such problems.

\subsection{Functional Analytic Setting}

In this subsection, we establish the functional analytic framework required to interpret the HJB equation as a parabolic partial differential equation on the Hilbert space $\overline{H}$. We recall that while the state dynamics are driven by singular inputs, the associated transition semigroup $P_t$ acts on the space of bounded, uniformly continuous functions $UC_b(\overline{H})$.

We first introduce the infinitesimal generator of the uncontrolled Ornstein-Uhlenbeck process. Due to the unbounded nature of the drift operator $A$, the domain of the generator requires careful definition involving the adjoint operator $A^*$.

\begin{definition}[The Ornstein-Uhlenbeck Generator $\mathcal{L}$]
\label{def:OU_generator}
Let $UC^2_b(\overline{H})$ denote the space of functions in $UC_b(\overline{H})$ with uniformly continuous and bounded Fréchet derivatives up to order two. We define the domain $\mathcal{D}(\mathcal{L}) \subset UC^2_b(\overline{H})$ as the set of functions $\varphi$ satisfying the following regularity conditions:
\begin{enumerate}[label=(\roman*)]
    \item For all $x \in \overline{H}$, the gradient $\nabla \varphi(x)$ belongs to the domain of the adjoint operator $D(A^*) \subset \overline{H}$, and the map $x \mapsto A^*\nabla \varphi(x)$ is bounded and uniformly continuous on $\overline{H}$.
    \item The map $x \mapsto \Tr(Q \nabla^2 \varphi(x))$ is well-defined, bounded, and uniformly continuous.
\end{enumerate}
For any $\varphi \in \mathcal{D}(\mathcal{L})$, the infinitesimal generator $\mathcal{L}$ is defined by  $\mathcal{L}[\varphi](x) \coloneqq \frac{1}{2} \Tr\left(Q \nabla^2 \varphi(x)\right) + \langle x, A^* \nabla \varphi(x) \rangle_{\overline{H}}.$
\end{definition}

With this operator, the optimal control problem is formally associated with the following infinite-dimensional non-linear parabolic equation (HJB equation):
\begin{equation} \label{eq:HJB_parabolic_formal}
    \begin{cases}
        -\partial_t v(t,x) - \mathcal{L}[v(t,\cdot)](x) - H_{min}(\nabla^B v(t,x)) = 0, & (t,x) \in [0,T) \times \overline{H}, \\
        v(T,x) = \phi(x).
    \end{cases}
\end{equation}
A key technical challenge is that mild solutions to \eqref{eq:HJB_parabolic_formal} are not necessarily smooth enough to belong to $\mathcal{D}(\mathcal{L})$, particularly near the terminal time $T$ where the gradient $\nabla^B v$ may become singular. To overcome this, we rely on approximations that converge uniformly on compact sets.

\begin{definition}[$\mathcal{K}$-Convergence]
\label{def:K_convergence}
Let $(f_n)_{n \in \mathbb{N}}$ be a sequence of functions in $C_b([0,T] \times \overline{H})$. We say that $(f_n)$ is \textit{$\mathcal{K}$-convergent} to a function $f$, denoted by $f_n \xrightarrow{\mathcal{K}} f$, if the sequence is uniformly bounded and converges uniformly on every compact subset $K \subset \subset \overline{H}$, uniformly with respect to time:
\[
\sup_{n \in \mathbb{N}} \|f_n\|_\infty < \infty \quad \text{and} \quad \lim_{n \to \infty} \sup_{t \in [0,T]} \sup_{x \in K} |f_n(t,x) - f(t,x)| = 0, \quad \forall K \subset \subset \overline{H}.
\]
This topology is sufficiently strong to pass to the limit in the integral terms of the verification theorem, while being weak enough to allow for the approximation of functions on infinite-dimensional spaces.
\end{definition}
\subsection{Approximation via Local Strong Solutions}

The singularity of the value function's gradient at the terminal time $T$ (where $\|\nabla^B v(t, \cdot)\| \sim (T-t)^{-1/2}$) prevents the mild solution from being a classical solution on the closed interval $[0,T]$. However, the smoothing properties of the transition semigroup ensure regularity on any interval bounded away from $T$. We thus introduce the notion of local strong solutions.

\begin{definition}[Local $\mathcal{K}$-Strong Solution]
\label{def:strong_solution}
A function $v \in C_b([0,T] \times \overline{H})$ is a \textit{local $\mathcal{K}$-strong solution} to the HJB equation \eqref{eq:HJB_parabolic_formal} if, for any $\epsilon \in (0, T)$, there exist sequences $(v_n^\epsilon) \subset C^{1,2}([0,T-\epsilon] \times \overline{H})$ and $(g_n^\epsilon) \subset C_b([0,T-\epsilon] \times \overline{H})$ such that:
\begin{enumerate}[label=(\roman*)]
    \item For each $t \in [0, T-\epsilon]$, $v_n^\epsilon(t, \cdot) \in \mathcal{D}(\mathcal{L})$;
    \item $v_n^\epsilon$ satisfies pointwise the approximated equation on the restricted interval $[0, T-\epsilon]$:
    \begin{equation} \label{eq:strong_approx}
        -\partial_t v_n^\epsilon(t,x) - \mathcal{L}[v_n^\epsilon(t,\cdot)](x) = H_{min}(\nabla^B v_n^\epsilon(t,x)) + g_n^\epsilon(t,x);
    \end{equation}
    \item As $n \to \infty$, the following convergences hold in the $\mathcal{K}$-sense on the domain $[0, T-\epsilon] \times \overline{H}$: $v_n^\epsilon \xrightarrow{\mathcal{K}} v, $ $ \nabla^B v_n^\epsilon \xrightarrow{\mathcal{K}} \nabla^B v, $ $ g_n^\epsilon \xrightarrow{\mathcal{K}} 0.$
\end{enumerate}
\end{definition}
For the following results we explicitly assume the following analytic smoothing property.
\begin{hypothesis}\label{hp:analytic smoothing property}
    for any $t > 0$, the extended semigroup $\overline{S}(t)$ maps $\overline{H}$ into the domain of the generator in the smaller space, i.e.,
    \[
    \overline{S}(t)(\overline{H}) \subseteq D(A) \subset H.
    \]
\end{hypothesis}

\begin{remark}
    
    \begin{enumerate}
        \item Comparing with the analytic semigroup property we know that a necessary condition is that the semigroup maps istantaneously on $\mathcal{D}(\overline{A})$. However, here we require more regularity, being the image of the semigroup contained in the smallest susbspace $\mathcal{D}(A)$. Nevertheless, this does not require that the semigroup is analytic.
        \item We introduced this analytic smoothing property as it is satisfied in our application to SVIEs (see Proposition \ref{prop:analytic_smoothing}). This condition can be probably relaxed, generalizing the approach of \cite{GozziMasiero23}. However, this would require more effort in the proof of the verification theorem.
    \end{enumerate}
\end{remark}

\begin{proposition}[Local Strong Regularity]
\label{prop:mild_is_strong}
Let Hypotheses \ref{hyp:system_data}, \ref{hyp:partial_smoothing} and \ref{hp:analytic smoothing property} hold. Assume that the terminal datum $\phi$ satisfies the assumptions of Theorem \ref{thm:HJB_existence} and that the Hamiltonian $H_{min}$ is Lipschitz continuous.

Then, the unique mild solution $v$ of equation \eqref{eq:HJB_mild_int} is a local $\mathcal{K}$-strong solution.
\end{proposition}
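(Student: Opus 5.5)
Fix $\epsilon\in(0,T)$ and work on $[0,T-\epsilon]$. The plan is to build the approximating sequences by the standard Galerkin--mollification scheme of the partial smoothing literature (as in \cite{GozziMasiero23,FGFM-I}), applied to the mild equation \eqref{eq:HJB_mild_int} restarted at time $T-\epsilon$.

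\emph{Step 1 (shifting the terminal time).} As in the proof of Theorem \ref{thm:value_regularity}(ii), set $\phi^\epsilon:=v(T-\epsilon,\cdot)$. Then $v$ on $[0,T-\epsilon]$ is the mild solution with terminal datum $\phi^\epsilon$. By Theorem \ref{thm:HJB_existence}, $\nabla^B v$ is bounded on $[0,T-\epsilon]\times\overline H$ by $C\epsilon^{-1/2}\|\bar\phi\|_\infty$. Consequently the forcing $F(s,x):=H_{min}(\nabla^B v(s,x))$ is bounded and continuous on $[0,T-\epsilon]\times\overline H$, and the mild equation is linear in $v$ once $F$ is frozen:
\[
v(t,x)=P_{T-\epsilon-t}[\phi^\epsilon](x)+\int_t^{T-\epsilon}P_{s-t}[F(s,\cdot)](x)\,\mathrm{d}s .
\]

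\emph{Step 2 (regularization).} Take finite-dimensional orthogonal projections $\Pi_n$ onto spans of eigenvectors or of elements of $D(A^*)$. Replace $\phi^\epsilon$ and $F$ by
\[
\phi^\epsilon_n:=(\phi^\epsilon\ast\rho_n)\circ\Pi_n,\qquad F_n:=(F\ast\rho_n)\circ\Pi_n,
\]
where $\rho_n$ are smooth mollifiers on $\Pi_n\overline H$; these functions are smooth cylindrical. Then define
\[
v_n^\epsilon(t,x):=P_{T-\epsilon-t}[\phi^\epsilon_n](x)+\int_t^{T-\epsilon}P_{s-t}[F_n(s,\cdot)](x)\,\mathrm{d}s .
\]
For smooth cylindrical data, $P_t$ preserves $C^2$ with explicit formulas for $\nabla P_t\varphi$ and $\nabla^2P_t\varphi$ involving $\overline S(t)^*$. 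Hypothesis \ref{hp:analytic smoothing property} gives $\overline S(t)^*\nabla\varphi\in D(A^*)$, because $A\overline S(t)$ is bounded from $\overline H$ to $H$, and hence $x\mapsto A^*\nabla v_n^\epsilon(t,x)$ is bounded and continuous. The trace class property of $Q_t$ (Hypothesis \ref{hyp:system_data}\ref{hyp:item:covariance}) handles the term $\Tr(Q\nabla^2 v_n^\epsilon)$. By the standard Kolmogorov equation for the OU semigroup on $\mathcal D(\mathcal L)$, $v_n^\epsilon$ is then a classical solution of
\[
-\partial_t v_n^\epsilon-\mathcal L v_n^\epsilon=F_n .
\]
Setting $g_n^\epsilon:=F_n-H_{min}(\nabla^B v_n^\epsilon)$ gives \eqref{eq:strong_approx} by construction.

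\emph{Step 3 (convergences).} First, $v_n^\epsilon\to v$ in the $\mathcal K$-sense. This follows from $\phi^\epsilon_n\to\phi^\epsilon$ and $F_n\to F$ pointwise with uniform bounds, by dominated convergence in the Gaussian integrals; uniformity on compacts comes from equicontinuity. Second, $\nabla^B v_n^\epsilon\to\nabla^B v$. I would use the representation formulas of Theorem \ref{thm:partial_reg} and Lemma \ref{lemma:convolution_smoothing}: each $B$-derivative is an integral of the data against the weight $\langle\Lambda^{\Gamma,B}(\cdot)k,(Q^\Gamma)^{-1/2}\Gamma y\rangle$. This weight is square integrable with norm at most $\kappa_0(t-s)^{-1/2}$, and that bound is integrable in $s$. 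Dominated convergence plus Cauchy--Schwarz therefore gives uniform convergence on compacts. Finally, $g_n^\epsilon\to0$ in the $\mathcal K$-sense, because $H_{min}$ is Lipschitz, $F_n\to F$, and $\nabla^B v_n^\epsilon\to\nabla^B v$.

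\emph{Main obstacle.} I expect the main difficulty to be Step 3's uniformity on compact sets, together with making the composition with $\Pi_n$ compatible with the $\Gamma$-structure. The $\Gamma$-smoothing formulas require the data to factor through $\Gamma$. Approximating by $\bar\phi_n\circ\Gamma$, with $\bar\phi_n$ smooth on $\mathcal Y$, is preferable for the terminal term. For $F$, which is $\bar f(s,\Gamma\overline S(s)\cdot)$-type by Remark \ref{rem:B_grad_unif}, I would mollify $\bar f$ on $\mathcal Y$ (after finite-dimensional projection there), rather than in $\overline H$. To get uniform convergence on compacts $\mathcal C\subset\overline H$, I would use that $\Gamma\overline S(t)\mathcal C$ is compact in $\mathcal Y$, together with the joint continuity of $\bar f$ and the uniform bound $\epsilon^{-1/2}$ away from $T$.
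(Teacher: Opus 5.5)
Your proposal follows essentially the same route as the paper. It restarts at $T-\epsilon$ with $\phi^\epsilon=v(T-\epsilon,\cdot)$ and the frozen forcing $F=H_{min}(\nabla^B v)$, replaces the data by projected and mollified cylindrical approximations whose gradients lie in $D(A^*)$, and lets $v_n^\epsilon$ be the classical solution of the linear Kolmogorov equation. It then sets $g_n^\epsilon:=F_n-H_{min}(\nabla^B v_n^\epsilon)$ and concludes with the Bismut--Elworthy--Li bound, the integrable $(s-t)^{-1/2}$ kernel and the Lipschitz continuity of $H_{min}$.

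The differences are minor: the paper gets $D(A^*)$-regularity from the shift $\psi_n(\overline S(\delta_n)\,\cdot)$ rather than from projections onto $D(A^*)$-spans. Your suggestion to mollify on $\mathcal Y$, so as to keep the $\Gamma$-factorization required by Theorem~\ref{thm:partial_reg}, makes explicit something the paper leaves implicit. One caveat applies to both arguments: for the terminal term the factor $(T-\epsilon-t)^{-1/2}$ is not integrated, so uniformity up to $t=T-\epsilon$ needs either convergence of $\nabla^B\phi_n^\epsilon$ itself or building the approximations on $[0,T-\epsilon/2]$ and restricting them to $[0,T-\epsilon]$.
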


\begin{proof}
Fix $\epsilon \in (0, T)$ and consider the restricted time interval $[0, T-\epsilon]$.
We aim to construct a sequence of regularized functions approximating the mild solution $v$ that satisfies the requirements of Definition \ref{def:strong_solution}.

{\textit{Construction of $(\phi_n^\epsilon, F_n)$}.} Let us define the auxiliary terminal datum $\phi^\epsilon \in C_b(\overline{H})$ and the forcing term $F \in C_b([0, T-\epsilon] \times \overline{H})$ by
   $ \phi^\epsilon(x) \coloneqq v(T-\epsilon, x), $ $ F(s,x) \coloneqq H_{min}(\nabla^B v(s,x)).$
By virtue of Theorem \ref{thm:value_regularity}, both $v$ and its $B$-directional gradient are bounded and continuous on $[0, T-\epsilon] \times \overline{H}$, ensuring that $\phi^\epsilon$ and $F$ are well-defined.

We generate the approximating sequence $(\phi_n^\epsilon, F_n)$ via a simultaneous regularization procedure involving finite-dimensional projection, mollification, and a smoothing shift by the semigroup.
Let $\{e_k\}_{k \in \mathbb{N}}$ be an orthonormal basis for $\overline{H}$, and denote by $P_n$ the orthogonal projection onto the subspace $H_n \coloneqq \operatorname{span}\{e_1, \dots, e_n\}$.
Let $\rho_n \in C^\infty_c(\mathbb{R}^n)$ be a standard mollifier supported in the ball of radius $1/n$. We associate to $\phi^\epsilon$ a smooth cylindrical approximation $\psi_n \in UC^\infty_b(\overline{H})$ defined by
    $\psi_n(x) \coloneqq \left( (\phi^\epsilon \circ \mathcal{I}_n^{-1}) * \rho_n \right) ( \langle x, e_1 \rangle, \dots, \langle x, e_n \rangle ),$
where $\mathcal{I}_n: \mathbb{R}^n \to H_n$ is the canonical isomorphism.
Similarly, let $\Psi_n \in C^1([0, T-\epsilon]; UC^\infty_b(\overline{H}))$ be a smooth cylindrical approximation of $F$, obtained by applying spatial mollification as above and a standard temporal regularization. 

To guarantee compatibility with the unbounded operator $A^*$, we introduce the semigroup shift $\delta_n \coloneqq 1/n$. We define the candidate approximations as
    $\phi_n^\epsilon(x) \coloneqq \psi_n(\overline{S}(\delta_n)x), $ $ F_n(s,x) \coloneqq \Psi_n(s, \overline{S}(\delta_n)x).$
We now rigorously verify that $\phi_n^\epsilon \in \mathcal{D}(\mathcal{L})$ (the argument for $F_n$ is identical).
Let $z \coloneqq \overline{S}(\delta_n)x$. By the Chain Rule, the Fréchet derivative of $\phi_n^\epsilon$ is $\nabla \phi_n^\epsilon(x) = \overline{S}(\delta_n)^* \nabla \psi_n(z).$
Note that $\nabla \psi_n(z)$ lies in $H_n$ and is uniformly bounded.
To check the regularity condition for the drift term, we recall that the analytic smoothing property (Hypothesis \ref{hyp:system_data}) implies that the adjoint semigroup maps $\overline{H}$ into $D(A^*)$ for any $t > 0$.
Consequently, the gradient $\nabla \phi_n^\epsilon(x) = \overline{S}(\delta_n)^* \nabla \psi_n(z)$ belongs to $D(A^*)$. Moreover, the map $x \mapsto A^* \nabla \phi_n^\epsilon(x)$ is bounded and uniformly continuous on $\overline{H}$.
Regarding the diffusion term, the Hessian is given by
    $\nabla^2 \phi_n^\epsilon(x) = \overline{S}(\delta_n)^* \nabla^2 \psi_n(z) \overline{S}(\delta_n).$
Since $\psi_n$ is cylindrical, $\nabla^2 \psi_n(z)$ has finite rank. Therefore, the trace $\Tr(Q \nabla^2 \phi_n^\epsilon(x))$ is well-defined and continuous.
Thus, $\phi_n^\epsilon \in \mathcal{D}(\mathcal{L})$ and $F_n(s, \cdot) \in \mathcal{D}(\mathcal{L})$.

{\textit{Classical solutions of perturbed HJB equations.}}  Let $v_n^\epsilon$ be the unique mild solution to the linear parabolic equation associated with the data $(\phi_n^\epsilon, F_n)$, defined by the integral representation:
\begin{equation}
    v_n^\epsilon(t,x) = P_{T-\epsilon-t}[\phi_n^\epsilon](x) + \int_t^{T-\epsilon} P_{s-t}[F_n(s,\cdot)](x) \, \mathrm{d}s, \quad t \in [0, T-\epsilon].
\end{equation}
We observe that, by construction, the terminal datum $\phi_n^\epsilon$ belongs to $\mathcal{D}(\mathcal{L})$. Furthermore, the source term $s \mapsto F_n(s, \cdot)$ is continuously differentiable and takes values in $\mathcal{D}(\mathcal{L})$.
Under these regularity assumptions, standard results on abstract evolution equations (see, e.g.,  \cite[Chapter 4, proof of Theorem 4.135, step 2]{FabbriGozziSwiech}) ensure that the mild solution upgrades to a classical solution. Thus, $v_n^\epsilon \in C^{1,2}([0, T-\epsilon] \times \overline{H})$ and satisfies the pointwise equation:
\begin{equation}
    -\partial_t v_n^\epsilon(t,x) - \mathcal{L}[v_n^\epsilon(t,\cdot)](x) = F_n(t,x), \quad v_n^\epsilon(T-\epsilon, x) = \phi_n^\epsilon(x).
\end{equation}
Rewriting the source term as $$F_n = H_{min}(\nabla^B v_n^\epsilon) + g_n^\epsilon, \quad \textit{with error term } g_n^\epsilon \coloneqq F_n - H_{min}(\nabla^B v_n^\epsilon),$$ we recover the structure required by Definition \ref{def:strong_solution}.

{\textit{Convergence}.}  The properties of projections, mollifiers, and the strong continuity of the semigroup ensure that $\phi_n^\epsilon \xrightarrow{\mathcal{K}} \phi^\epsilon$ and $F_n \xrightarrow{\mathcal{K}} F$ (uniform convergence on compact sets).
Standard stability estimates for mild solutions of linear evolution equations imply $v_n^\epsilon \xrightarrow{\mathcal{K}} v$.
For the gradients, differentiating the mild representation yields
\begin{equation}
    \begin{aligned}
    \|\nabla^B v_n^\epsilon(t,\cdot) - \nabla^B v(t,\cdot)\|_{\infty, K} \le \|\nabla^B P_{T-\epsilon-t}[\phi_n^\epsilon - \phi^\epsilon]\|_{\infty, K} + \int_t^{T-\epsilon} \|\nabla^B P_{s-t}[F_n(s,\cdot) - F(s,\cdot)]\|_{\infty, K} \, \mathrm{d}s.
    \end{aligned}
\end{equation}
Exploiting the smoothing estimate (Theorem \ref{thm:partial_reg}) and the local integrability of the kernel, the $\mathcal{K}$-convergence of the data implies $\nabla^B v_n^\epsilon \xrightarrow{\mathcal{K}} \nabla^B v$.
Since $H_{min}$ is Lipschitz continuous, it follows that $g_n^\epsilon \xrightarrow{\mathcal{K}} 0$, concluding the proof.
\end{proof}

\begin{proposition}[Fundamental Identity]
\label{prop:fundamental_identity}
Let the assumptions of Proposition \ref{prop:mild_is_strong} hold. Let $v$ be the unique mild solution to the HJB equation \eqref{eq:HJB_mild_int} extended to $\overline{H}$.

Then, for any initial datum $(t,x) \in [0,T] \times \overline{H}$ and for any admissible control $u \in \mathcal{U}$ such that the integral term below is well-defined, the following identity holds:
\begin{equation}\label{eq:fundamental_rel}
    v(t,x) = J(t,x,u) + \mathbb{E}\left[ \int_t^T \left( H_{min}(\nabla^B v(s,X(s))) - H_{CV}(\nabla^B v(s,X(s)); u(s)) \right) \, \mathrm{d}s \right],
\end{equation}
where $X(\cdot)$ denotes the mild solution of the state equation \eqref{eq:state_sde} driven by $u$, and we define the control Hamiltonian as $H_{CV}(p; u) \coloneqq \langle p, u \rangle_K + \ell_1(u)$.
\end{proposition}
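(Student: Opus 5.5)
The plan is to apply Itô's formula to the smooth approximations $v_n^\epsilon$ given by Proposition \ref{prop:mild_is_strong}, along the controlled trajectory on $[t,T-\epsilon]$, and then let first $n\to\infty$ and afterwards $\epsilon\to 0$. Fix $(t,x)$ and $u\in\mathcal U$. By Definition \ref{def:mild_sol}, $X(s)\in H$ for $s>t$. To avoid the singularity at the initial time when $x\in\overline H$, I will work on $[t+\delta,T-\epsilon]$ and let $\delta\to0$ at the end, using continuity of $v$ and of the paths of $X$ in $\overline H$.

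The first step is the approximate identity. Since $v_n^\epsilon(s,\cdot)\in\mathcal D(\mathcal L)$ is $C^{1,2}$, and the state is only a mild solution, I would apply Itô's formula for mild solutions (as in \cite{FabbriGozziSwiech}, Prop.~1.164-type). This is legitimate because $\nabla v_n^\epsilon\in D(A^*)$ with $A^*\nabla v_n^\epsilon$ bounded, and because the cylindrical structure with the shift $\overline S(\delta_n)$ makes the trace term well defined. The drift contribution of the control is $\langle \nabla v_n^\epsilon, Bu\rangle = \langle\nabla^B v_n^\epsilon,u\rangle_K$. Combining this with \eqref{eq:strong_approx} and taking expectations (the stochastic integral is a true martingale since $G^*\nabla v_n^\epsilon$ is bounded) gives
\[
\mathbb E[v_n^\epsilon(T-\epsilon,X(T-\epsilon))]-\mathbb E[v_n^\epsilon(t+\delta,X(t+\delta))]=\mathbb E\int_{t+\delta}^{T-\epsilon}\bigl(\langle\nabla^Bv_n^\epsilon,u\rangle_K-H_{min}(\nabla^Bv_n^\epsilon)-g_n^\epsilon\bigr)(s,X(s))\,\mathrm ds .
\]
Adding and subtracting $\ell_1(u(s))$ rewrites the integrand as $H_{CV}(\nabla^B v_n^\epsilon;u)-H_{min}(\nabla^B v_n^\epsilon)-\ell_1(u)-g_n^\epsilon$.

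The second step is the limit $n\to\infty$. The laws of $X(s)$, $s\in[t+\delta,T-\epsilon]$, form a tight family in $\overline H$, because $s\mapsto X(s)$ is continuous and the time interval is compact. So for every $\eta>0$ there is a compact set $\mathcal C\subset\overline H$ with $\mathbb P(X(s)\notin\mathcal C)<\eta$ uniformly in $s$; I would use a compact of paths to make this uniform. The $\mathcal K$-convergences $v_n^\epsilon\to v$, $\nabla^B v_n^\epsilon\to\nabla^B v$ and $g_n^\epsilon\to0$, together with the uniform bounds, then let dominated convergence pass to the limit in every term. Here the Lipschitz property of $H_{min}$ is used, and $U$ is bounded so that $\langle\cdot,u\rangle$ is controlled. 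The result is the same identity with $v$ and without $g$.

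The last step, which I expect to be the main obstacle, is the limits $\delta\to0$ and $\epsilon\to0$. As $\delta\to0$, continuity of $v$ on $[0,T]\times\overline H$ gives $\mathbb E v(t+\delta,X(t+\delta))\to v(t,x)$. For $\epsilon\to0$, the integrand is dominated by $C(1+(T-s)^{-1/2})$ by Theorem \ref{thm:HJB_existence}, which is integrable; this is where the standing assumption that the integral term is well defined enters. For the terminal term, $v(T-\epsilon,X(T-\epsilon))\to\phi(X(T))$ needs care, since $\phi$ is defined only on $H$ and is continuous there. I would use the mild equation \eqref{eq:HJB_mild_int} and the Markov property to write $\mathbb E[v(T-\epsilon,X(T-\epsilon))]$ as $\mathbb E[P_\epsilon[\phi](X(T-\epsilon))]$ plus an $O(\epsilon^{1/2})$ term. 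I would then compare $P_\epsilon[\phi](X(T-\epsilon))$ with $\phi(X(T))$ using the Lipschitz continuity of $\phi$ on $H$ and the $H$-continuity of $X$ near $T$. The latter holds because $X(s)\in H$ for $s>t$, and the convolution terms are continuous in $H$ thanks to the smoothing in Hypothesis \ref{hp:analytic smoothing property} and the trace-class assumption. Rearranging the resulting identity yields \eqref{eq:fundamental_rel}.
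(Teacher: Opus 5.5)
Your argument has the same skeleton as the paper's proof. You apply It\^o's formula to the approximants $v_n^\epsilon$ of Proposition \ref{prop:mild_is_strong} along the controlled state, substitute \eqref{eq:strong_approx}, and let $n\to\infty$ using $\mathcal{K}$-convergence and dominated convergence. You then let $\epsilon\to0$ using $\|\nabla^B v(s,\cdot)\|_K\le C(T-s)^{-1/2}$ and the boundedness of $U$. The cut-off at $t+\delta$ and the tightness argument are harmless but unnecessary. It\^o's formula is applied in $\overline{H}$, where $v_n^\epsilon$ is smooth up to $s=t$, so there is no singularity at the initial time. As in the paper, compactness of $X([t,T-\epsilon])$ in $\overline{H}$ for each fixed $\omega$ already turns $\mathcal{K}$-convergence into uniform convergence along every path.

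The genuine difference is the terminal term, and there your route is more careful than the paper's. The paper infers $v(T-\epsilon,X(T-\epsilon))\to\phi(X(T))$ from continuity of $v$ on $[0,T]\times\overline{H}$. But $v(T,\cdot)=\phi$ is defined and continuous only on $H$, so this continuity is not really available at $t=T$. Your step, writing $v(T-\epsilon,\cdot)=P_\epsilon[\phi]+O(\epsilon^{1/2})$ uniformly via \eqref{eq:HJB_mild_int} and then using the Lipschitz continuity of $\phi$ on $H$, avoids this problem.

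What is not justified is your final step. $H$-regularity of $X$ near $T$ does not follow from the trace-class condition, which is a condition in $\overline{H}$. Nor does it follow from Hypothesis \ref{hp:analytic smoothing property}, which says nothing about how fast $\|\overline{S}(r)\|_{\mathcal{L}(\overline{H},H)}$ blows up as $r\downarrow0$. You need explicit assumptions such as $\int_0^T\|\overline{S}(r)G\|^2_{L_2(\Xi,H)}\,\mathrm{d}r<\infty$ (Hilbert--Schmidt norm) and $\int_0^T\|\overline{S}(r)B\|_{\mathcal{L}(K,H)}\,\mathrm{d}r<\infty$. This is the same kind of condition the paper tacitly relies on when it asserts $X(s)\in H$.

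With these assumptions you do not even need path continuity. By the mild formula on $[T-\epsilon,T]$, $X(T)-\overline{S}(\epsilon)X(T-\epsilon)$ is the sum of the control and stochastic convolutions over the last $\epsilon$ units of time. Both vanish in $L^1(\Omega;H)$ as $\epsilon\to0$. Together with $\int\|y\|_H\,\mathcal{N}(0,Q_\epsilon)(\mathrm{d}y)\to0$, this is exactly what comparing $P_\epsilon[\phi](X(T-\epsilon))$ with $\phi(X(T))$ requires.

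These conditions are not automatic. For the lift of Section \ref{sec:application_svie_detailed}, the first one reads $\int_{(0,\infty)}(1+x)^{\eta-1}\mu(\mathrm{d}x)<\infty$. This holds for $\eta<1-\eta_*$, but fails, for instance, for the Riemann--Liouville kernel when $\eta\in[1-\eta_*,1)$.
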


\begin{proof}
Fix $(t,x) \in [0,T] \times \overline{H}$ and let $u \in \mathcal{U}$ be an arbitrary admissible control. Let $X(\cdot)$ be the associated state process. The proof proceeds by regularization via local strong solutions, followed by limiting arguments.
Fix $\epsilon \in (0, T-t)$. By Proposition \ref{prop:mild_is_strong}, the mild solution $v$ is a local $\mathcal{K}$-strong solution. Thus, there exists an approximating sequence $(v_n^\epsilon)_{n \in \mathbb{N}} \subset C^{1,2}([0, T-\epsilon] \times \overline{H}) \cap \mathcal{D}(\mathcal{L})$ satisfying {the perturbed PDE  \eqref{eq:strong_approx} pointwise}, 
where the error term $g_n^\epsilon$ converges to zero uniformly on compact sets as $n \to \infty$.
Applying Itô's formula to the process $s \mapsto v_n^\epsilon(s, X(s))$ on the interval $[t, T-\epsilon]$ yields
\begin{small}
\begin{gather*}
    v_n^\epsilon(T-\epsilon, X(T-\epsilon)) - v_n^\epsilon(t,x)= \int_t^{T-\epsilon} \left( \partial_s + \mathcal{L} \right) v_n^\epsilon(s, X(s)) \, \mathrm{d}s + \int_t^{T-\epsilon} \langle \nabla^B v_n^\epsilon(s, X(s)), u(s) \rangle_K \, \mathrm{d}s + \int_t^{T-\epsilon} \langle \nabla v_n^\epsilon(s, X(s)), G \, \mathrm{d}W(s) \rangle_{\overline{H}}.
\end{gather*}
\end{small}
Substituting the PDE relation \eqref{eq:strong_approx} into the first integral and adding/subtracting the running cost $\ell_1(u(s))$, we rearrange terms to obtain:
\begin{gather} \label{eq:ito_rearranged}
    v_n^\epsilon(t,x) = v_n^\epsilon(T-\epsilon, X(T-\epsilon)) + \int_t^{T-\epsilon} \ell_1(u(s)) \, \mathrm{d}s \nonumber \\
    \quad + \int_t^{T-\epsilon} \left[ H_{min}(\nabla^B v_n^\epsilon) - H_{CV}(\nabla^B v_n^\epsilon; u(s)) + g_n^\epsilon \right] (s, X(s)) \, \mathrm{d}s- \int_t^{T-\epsilon} \langle \nabla v_n^\epsilon(s, X(s)), G \, \mathrm{d}W(s) \rangle_{\overline{H}}.
\end{gather}

We take the expectation in \eqref{eq:ito_rearranged}. The stochastic integral vanishes since $v_n^\epsilon$ has bounded derivatives on the compact interval.
Regarding the deterministic terms, recall that $X(\cdot)$ is continuous, so the image $X([t, T-\epsilon])$ is compact almost surely. By Proposition \ref{prop:mild_is_strong}, $v_n^\epsilon \to v$ and $\nabla^B v_n^\epsilon \to \nabla^B v$ uniformly on compacts, while $g_n^\epsilon \to 0$.
Invoking the Dominated Convergence Theorem (justified by the uniform bounds on the approximations), we pass to the limit $n \to \infty$:
\begin{small}
\begin{gather*}
    v(t,x) = \mathbb{E}\left[ v(T-\epsilon, X(T-\epsilon)) + \int_t^{T-\epsilon} \ell_1(u(s)) \, \mathrm{d}s \right]  + \mathbb{E}\left[ \int_t^{T-\epsilon} \left( H_{min}(\nabla^B v(s, X(s))) - H_{CV}(\nabla^B v(s, X(s)); u(s)) \right) \, \mathrm{d}s \right].
\end{gather*}
\end{small}
We extend the identity to $[t, T]$.
First, consider the terminal term. Due to the analytic smoothing property we have that $X(T) \in H$ almost surely. Since $v$ is continuous on $[0,T] \times \overline{H}$ and coincides with $\phi$ on $H$, we have the almost sure convergence $\lim_{\epsilon \to 0} v(T-\epsilon, X(T-\epsilon)) = \phi(X(T)).$
Since $v$ is bounded, convergence in $L^1(\Omega)$ follows.
Second, consider the integral term involving the Hamiltonians. By Theorem \ref{thm:value_regularity}, the gradient satisfies the singular estimate $\|\nabla^B v(s, \cdot)\|_K \le C (T-s)^{-1/2}$.
Using the linear growth of $H_{min}$ and $H_{CV}$ in the momentum variable $p$, the integrand is bounded by
 $\left| H_{min}(\nabla^B v) - H_{CV}(\nabla^B v; u) \right| \le C (T-s)^{-1/2} (1 + \|u(s)\|_K).$
Since the control set $U$ is bounded (Hypothesis \ref{hyp:system_data}), the term $\|u(s)\|_K$ is uniformly bounded. This guarantees the integrability of the product with the singular kernel $(T-s)^{-1/2}$, and the Dominated Convergence Theorem allows us to extend the integral to $T$.
Letting $\epsilon \to 0$  and identifying the cost functional $J(t,x,u)$, we recover the identity \eqref{eq:fundamental_rel}.
\end{proof}

\begin{theorem}[Verification Theorem]
\label{thm:verification}
Let the assumptions of Proposition \ref{prop:fundamental_identity} hold. Let $v$ be the unique mild solution of the HJB equation \eqref{eq:HJB_mild_int} extended to $\overline{H}$.
Then:

\begin{enumerate}[label=(\roman*)]
    \item \label{item:verification_bound}
     For any $(t,x) \in [0,T] \times \overline{H}$, $u \in \mathcal{U}$, we have $v(t,x) \le J(t,x,u).$
     Consequently, $v(t,x) \le V(t,x),$ where $V(t,x) \coloneqq \inf_{u \in \mathcal{U}} J(t,x,u)$ is the value function of the optimal control problem defined on $\overline{H}$.

    \item \label{item:verification_opt}
     Let $(t,x) \in [0,T] \times \overline{H}$ be fixed. Suppose there exists an admissible control $u^* \in \mathcal{U}$ such that, denoting by $X^*(\cdot)$ the corresponding state trajectory, the following feedback condition holds for $ds \otimes d\mathbb{P}$-almost all $(s, \omega) \in [t,T] \times \Omega$:
    \begin{equation}\label{eq:feedback_cond}
        u^*(s) \in \operatorname*{arg\,min}_{u \in U} H_{CV}(\nabla^B v(s, X^*(s)); u).
    \end{equation}
     Then the pair $(u^*, X^*)$ is optimal, and the value function coincides with the mild solution of the HJB equation:
    \begin{equation}
        v(t,x) = V(t,x) = J(t,x,u^*).
    \end{equation}
\end{enumerate}
\end{theorem}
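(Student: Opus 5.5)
The proof will be a direct consequence of the fundamental identity of Proposition \ref{prop:fundamental_identity}. The only additional ingredient is the pointwise sign of the Hamiltonian gap. By definition of $H_{min}$, for every $p \in K$ and every $u \in U$ we have
\[
H_{min}(p) = \inf_{w\in U}\{\langle p,w\rangle_K + \ell_1(w)\} \le \langle p,u\rangle_K + \ell_1(u) = H_{CV}(p;u).
\]
Hence the integrand $H_{min}(\nabla^B v(s,X(s))) - H_{CV}(\nabla^B v(s,X(s));u(s))$ in \eqref{eq:fundamental_rel} is nonpositive for $ds\otimes d\mathbb{P}$-a.e. $(s,\omega)$.

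For part \ref{item:verification_bound}, I will fix $(t,x)\in[0,T]\times\overline{H}$ and $u\in\mathcal{U}$, and first check that the correction term in \eqref{eq:fundamental_rel} is well defined. This uses the bound $\|\nabla^B v(s,\cdot)\|_K\le C(T-s)^{-1/2}$ from Theorem \ref{thm:HJB_existence}, the boundedness of $U$ (Hypothesis \ref{hyp:system_data}), and the Lipschitz property of $H_{min}$. Together these give an integrable majorant $C(T-s)^{-1/2}(1+\sup_{U}\|u\|_K) + \sup_U|\ell_1|$, so the identity applies to every admissible control. Since the integrand is nonpositive, its expectation is $\le 0$, and therefore $v(t,x)\le J(t,x,u)$. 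Taking the infimum over $u\in\mathcal{U}$ gives $v\le V$. I will also note that $\ell_1$ must be bounded on $U$ for $J$ to be finite; I will take this as implicit in the standing assumptions, since $H_{min}$ finite and Lipschitz is already assumed.

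For part \ref{item:verification_opt}, I will apply \eqref{eq:fundamental_rel} with $u=u^*$ and $X=X^*$. The feedback condition \eqref{eq:feedback_cond} says exactly that $H_{CV}(\nabla^B v(s,X^*(s));u^*(s)) = H_{min}(\nabla^B v(s,X^*(s)))$ for a.e. $(s,\omega)$. The integrand therefore vanishes a.e. and $v(t,x)=J(t,x,u^*)$. Combining this with part \ref{item:verification_bound} gives $J(t,x,u^*)=v(t,x)\le V(t,x)\le J(t,x,u^*)$, so all three quantities coincide and $u^*$ is optimal.

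The main point needing care is not the algebra but the applicability of Proposition \ref{prop:fundamental_identity} to an arbitrary admissible control. Its proof relies on Itô's formula for the approximations $v_n^\epsilon$ along $X(\cdot)$, and on the limit $v(T-\epsilon,X(T-\epsilon))\to\phi(X(T))$. I will simply invoke the proposition as stated, having verified its integrability hypothesis through the majorant above. I will also remark that the argmin in \eqref{eq:feedback_cond} is measurable along the trajectory, because $u^*$ is assumed admissible, i.e.\ progressively measurable. Existence of such a $u^*$ is not part of this statement.
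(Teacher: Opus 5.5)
Your proposal is correct and follows essentially the same route as the paper: part (i) comes from the nonpositivity of $H_{min}-H_{CV}$ inside the fundamental identity \eqref{eq:fundamental_rel}, and part (ii) from the vanishing of that integrand under the feedback condition. Your integrable-majorant check on the correction term and the chain $J(t,x,u^*)=v(t,x)\le V(t,x)\le J(t,x,u^*)$ make explicit steps that the paper's proof leaves implicit.
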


\begin{proof}
The proof follows directly from the Fundamental Identity \eqref{eq:fundamental_rel}.
For \ref{item:verification_bound}, observe that $H_{min}(p) \le H_{CV}(p; u)$ by definition, making the integrand in \eqref{eq:fundamental_rel} non-positive.
For \ref{item:verification_opt}, the feedback condition implies $H_{min} = H_{CV}$ almost everywhere, making the integral term zero.
\end{proof}

\begin{remark}[Extension to Lipschitz Payoffs]
\label{rem:nonsmooth_payoffs}
The validity of Theorem \ref{thm:verification} relies on the integrable singularity of the gradient $\|\nabla^B v(s, \cdot)\|_K \le C(T-s)^{-1/2}$. This condition holds even if the terminal payoff $\phi$ is merely Lipschitz continuous (rather than $C^1$), as shown in Theorem \ref{thm:value_regularity}-\ref{item:val_reg_ii}. Thus, the Verification Theorem extends to standard option payoffs via the density argument detailed in Proposition \ref{prop:fundamental_identity}.
\end{remark}
\section{Optimal Feedback Controls}
\label{sec:optimal_feedback}

This section addresses the construction of optimal controls in feedback form. Building upon the Verification Theorem \ref{thm:verification}, we introduce the \textit{multivalued feedback map} $\Psi: [0,T) \times \overline{H} \rightrightarrows U$ defined by the minimization of the Hamiltonian:
\begin{equation}\label{eq:feedback_map_def}
    \Psi(s,y) \coloneqq \operatorname*{arg\,min}_{u \in U} H_{CV}(\nabla^B v(s,y); u) 
    = \operatorname*{arg\,min}_{u \in U} \left\{ \langle \nabla^B v(s,y), u \rangle_K + \ell_1(u) \right\},
\end{equation}
where $v$ denotes the unique mild solution of the HJB equation \eqref{eq:HJB_mild_int}. For a given initial pair $(t,x) \in [0,T) \times \overline{H}$, the \textit{Closed Loop Equation} is formally described by the stochastic differential inclusion:
\begin{equation}\label{eq:CLE_inclusion}
        \diff Y(s) \in (A Y(s) + B \Psi(s,Y(s))) \,\diff s + G \,\diff W(s), \quad s \in [t,T), \quad
        Y(t) = x.
\end{equation}
The connection between the solvability of the closed loop system and optimality is established by the following corollary.

\begin{corollary}[Optimality of Feedback Controls]
\label{cor:optimal_feedback}
    Let the assumptions of Theorem \ref{thm:verification} hold. Let $v$ be the unique mild solution of \eqref{eq:HJB_mild_int}.
    Fix $(t,x) \in [0,T) \times \overline{H}$. Assume that the map $\Psi$ admits a measurable selection $\psi: [0,T) \times \overline{H} \to U$ such that the corresponding Closed Loop Equation:
    \begin{equation}
    \label{eq:CLE_selection}
            \diff Y(s) = (A Y(s) + B \psi(s, Y(s))) \,\diff s + G \,\diff W(s), \quad s \in [t,T), \quad
            Y(t) = x.
    \end{equation}
    admits a mild solution $Y_\psi(\cdot)$ in $\overline{H}$ (in the sense of Definition \ref{def:mild_sol}). Then, the feedback control $u_\psi(s) \coloneqq \psi(s, Y_\psi(s))$ is optimal for the problem starting at $(t,x)$, and $V(t,x) = v(t,x)$. Furthermore, if $\Psi$ is single-valued and the mild solution to \eqref{eq:CLE_selection} is unique, the optimal control is unique.
\end{corollary}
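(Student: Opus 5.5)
The plan is to reduce the corollary to part \ref{item:verification_opt} of Theorem \ref{thm:verification}. Define the feedback control $u_\psi(s) := \psi(s, Y_\psi(s))$ for $s\in[t,T)$; on the null-measure endpoint $s=T$ it may be extended arbitrarily with values in $U$. The first step is \emph{admissibility}: $u_\psi\in\mathcal U$. Since $\psi$ is Borel measurable with values in the closed bounded set $U$, and $Y_\psi$ is an $(\mathcal F_s)$-adapted $\overline H$-valued process with continuous trajectories (this follows from the mild representation \eqref{eq:mild_solution}), the process $Y_\psi$ is progressively measurable. Composing with the jointly measurable map $(s,y)\mapsto\psi(s,y)$ therefore gives a progressively measurable $U$-valued process. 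I would spell out this measurability composition explicitly, since it is the only point where the measurable selection hypothesis enters.

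The second step is to \emph{identify the state trajectory}. By Definition \ref{def:mild_sol}, $Y_\psi$ satisfies
\[
Y_\psi(s)=\overline S(s-t)x+\int_t^s \overline S(s-r)B\psi(r,Y_\psi(r))\,\diff r+\int_t^s\overline S(s-r)G\,\diff W(r).
\]
This is exactly the mild equation \eqref{eq:mild_solution} driven by the control $u_\psi$. By uniqueness of mild solutions for a given control (noted after Definition \ref{def:mild_sol}), we get $X(\cdot;t,x,u_\psi)=Y_\psi$ $\mathbb P$-a.s.

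The third step is to \emph{check the feedback condition} \eqref{eq:feedback_cond}. For every $(s,\omega)$ with $s<T$ we have $u_\psi(s)=\psi(s,Y_\psi(s))\in\Psi(s,Y_\psi(s))=\operatorname{arg\,min}_{u\in U}H_{CV}(\nabla^B v(s,X^*(s));u)$ with $X^*=Y_\psi$. So the condition holds $ds\otimes d\mathbb P$-a.e. We must also ensure the integral in the Fundamental Identity is well defined. By Theorem \ref{thm:HJB_existence} we have $|\nabla^B v(s,\cdot)|\le C(T-s)^{-1/2}$, and $U$ is bounded, so the integrand is dominated by $C(T-s)^{-1/2}$, which is integrable. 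Theorem \ref{thm:verification}\ref{item:verification_opt} then yields $v(t,x)=V(t,x)=J(t,x,u_\psi)$, so $u_\psi$ is optimal.

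For \emph{uniqueness}, let $\tilde u$ be any optimal control with state $\tilde X$. Then $J(t,x,\tilde u)=v(t,x)$, and the Fundamental Identity \eqref{eq:fundamental_rel} forces
\[
\mathbb E\int_t^T\bigl[H_{CV}(\nabla^B v(s,\tilde X(s));\tilde u(s))-H_{min}(\nabla^B v(s,\tilde X(s)))\bigr]\,\diff s=0.
\]
The integrand is nonnegative, so it vanishes $ds\otimes d\mathbb P$-a.e., i.e.\ $\tilde u(s)\in\Psi(s,\tilde X(s))$ a.e. If $\Psi$ is single-valued, $\Psi=\{\psi\}$, then $\tilde u(s)=\psi(s,\tilde X(s))$ a.e. Consequently $\tilde X$ is a mild solution of \eqref{eq:CLE_selection}, since modifying the control on a $ds\otimes d\mathbb P$-null set does not change the Bochner integral in \eqref{eq:mild_solution}. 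By the assumed uniqueness, $\tilde X=Y_\psi$, and hence $\tilde u=u_\psi$ a.e.

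The main delicate point is this last step: passing from ``equal a.e.\ in $(s,\omega)$'' to $\tilde X$ solving the closed-loop mild equation pathwise. I would handle it via Fubini, showing that for a.e.\ $\omega$ the two controls agree for a.e.\ $s$, so that the drift integrals coincide for all $s$ almost surely.
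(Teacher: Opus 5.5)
Your proposal is correct and follows the route the paper intends. The paper gives no separate proof and treats the corollary as a direct application of Theorem~\ref{thm:verification}\ref{item:verification_opt}, which is exactly your reduction: admissibility of $u_\psi$, the identification $X(\cdot;t,x,u_\psi)=Y_\psi$, and the feedback condition \eqref{eq:feedback_cond} holding by construction.

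Your uniqueness argument via \eqref{eq:fundamental_rel} forces $\tilde u(s)\in\Psi(s,\tilde X(s))$ a.e.\ and then invokes uniqueness for \eqref{eq:CLE_selection}, which correctly supplies the part the paper leaves implicit. One small caveat: pathwise continuity of $Y_\psi$ does not follow automatically from the mild form when $Q_t$ is only assumed trace class. What you actually need is progressive measurability, and that holds if you use a predictable version of the stochastic convolution.
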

To ensure the well-posedness of \eqref{eq:CLE_selection}, we require regularity conditions on the minimizer of the Hamiltonian. Let $\Gamma(p)$ denote the set of minimizers for a momentum $p \in K$:
\begin{equation}\label{eq:Gamma_set_def}
    \Gamma(p) \coloneqq \left\{ u \in U : \langle p, u \rangle_K + \ell_1(u) = H_{min}(p) \right\}.
\end{equation}
Consequently, $\Psi(t,x) = \Gamma(\nabla^B v(t,x))$. While $\Gamma(p)$ is non-empty under standard compactness or coercivity assumptions, the existence of a strong solution requires Lipschitz regularity of the selection.

\begin{hypothesis}[Regular Feedback]\label{hyp:feedback_regularity}
    The multivalued map $\Gamma: K \rightrightarrows U$ admits a Lipschitz continuous selection $\gamma: K \to U$. Specifically, there exists $L_\gamma > 0$ such that $\|\gamma(p_1) - \gamma(p_2)\|_K \le L_\gamma \|p_1 - p_2\|_K, $ $ p_1, p_2 \in K.$
\end{hypothesis}

\begin{remark}
    Hypothesis \ref{hyp:feedback_regularity} is satisfied, for example, when $\ell_1$ is strictly convex and smooth, where $\gamma$ is the unique minimizer.
\end{remark}

We consider the Closed Loop Equation driven by this regular selection:
\begin{equation}\label{eq:CLE_gamma}
        \diff Y(s) = (A Y(s) + B \gamma(\nabla^B v(s, Y(s)))) \,\diff s + G \,\diff W(s), \quad s \in [t,T], \quad 
        Y(t) = x.
\end{equation}

\begin{theorem}[Existence of Optimal Feedback Control]
\label{thm:feedback_existence}
    Let Hypotheses \ref{hyp:system_data}, \ref{hyp:partial_smoothing} (with $\gamma=1/2$), and \ref{hyp:feedback_regularity} hold.
    Assume that the final datum $\phi$ satisfies the regularity conditions of Proposition \ref{prop:mild_is_strong}:
    \begin{enumerate}[label=(\roman*)]
        \item $\phi \in B^\Gamma_b(H)$;
        \item $\phi \in C^1_b(H)$ and $\nabla \phi$ is Lipschitz continuous.
    \end{enumerate}

    Then, for any initial data $(t,x) \in [0,T) \times \overline{H}$, the closed loop equation \eqref{eq:CLE_gamma} admits a unique mild solution $Y_\gamma(\cdot)$ in the space of continuous paths $C([t,T]; \overline{H})$.
    Consequently, the feedback control process defined by
  $u^*(s) \coloneqq \gamma(\nabla^B v(s, Y_\gamma(s))), $ $ s \in [t,T],$
    is an optimal control for the problem starting at $(t,x)$, and the optimal cost is given by $J(t,x,u^*) = v(t,x)$.
\end{theorem}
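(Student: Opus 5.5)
The plan is to solve the closed loop equation \eqref{eq:CLE_gamma} by a Banach fixed point argument for its mild formulation, and then to invoke Corollary \ref{cor:optimal_feedback}. Set $F(s,y) \coloneqq \gamma(\nabla^B v(s,y))$, which takes values in the bounded set $U$. Consider the map
\[
\mathcal{T}(Y)(s) \coloneqq \overline{S}(s-t)x + \int_t^s \overline{S}(s-r) B F(r,Y(r))\,\diff r + \int_t^s \overline{S}(s-r) G\,\diff W(r),
\]
defined on the space $\mathcal{H}$ of $\overline{H}$-valued, progressively measurable, pathwise continuous processes, equipped with the norm $\bigl(\sup_{s\in[t,T]} \mathbb{E}\, e^{-2\lambda s}\|Y(s)\|^2_{\overline{H}}\bigr)^{1/2}$. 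By Hypothesis \ref{hyp:system_data}\ref{hyp:item:covariance} the stochastic convolution is a well-defined $\overline{H}$-valued process; continuity of its paths would be obtained by the factorization method, if needed after upgrading the trace condition to an integrable bound on $\Tr(\overline{S}(s)GG^*\overline{S}(s)^*)$. The drift term is bounded because $B\in\mathcal{L}(K,\overline{H})$ and $U$ is bounded. Hence $\mathcal{T}$ maps $\mathcal{H}$ into itself, provided $F$ is measurable.

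The key step, and the main obstacle, is a Lipschitz estimate for $y\mapsto F(s,y)$ in the $\overline{H}$-norm. By Hypothesis \ref{hyp:feedback_regularity}, it suffices to bound $\|\nabla^B v(s,y_1)-\nabla^B v(s,y_2)\|_K$. Theorem \ref{thm:value_regularity}\ref{item:val_reg_i} applies under assumptions (i) and (ii). It gives $\nabla v$ bounded and the mixed derivative $\nabla\nabla^B v=\nabla^B\nabla v$ with $\|\nabla^B\nabla v(s,\cdot)\|_{\mathcal{L}(\overline{H},K)}\le C(T-s)^{-1/2}\|\nabla\phi\|_\infty$. By the mean value theorem along the segment joining $y_1$ and $y_2$ in $\overline{H}$,
\[
\|F(s,y_1)-F(s,y_2)\|_K \le L_\gamma C (T-s)^{-1/2}\|\nabla\phi\|_\infty\,\|y_1-y_2\|_{\overline{H}}.
\]
This requires checking that the mixed derivative is genuinely a derivative of $\nabla^B v$ along $\overline{H}$-directions, which is what the identification $\nabla\nabla^B v=\nabla^B\nabla v$ in $\Sigma^2_{T,1/2}$ provides. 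The singular weight $(T-s)^{-1/2}$ is integrable on $[t,T]$, so it does not spoil the argument. Measurability and continuity of $F$ follow from the continuity of $\nabla^B v$ on $[0,T)\times\overline{H}$, established in Theorem \ref{thm:HJB_existence}.

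For the contraction step, with $Y_1,Y_2\in\mathcal{H}$ we use $\|\overline{S}(s-r)B\|\le \overline{M}e^{\overline{\omega}T}\|B\|$ to obtain
\[
\mathbb{E}\|\mathcal{T}Y_1(s)-\mathcal{T}Y_2(s)\|^2_{\overline{H}} \le C\int_t^s (T-r)^{-1/2}\,\diff r \int_t^s (T-r)^{-1/2}\,\mathbb{E}\|Y_1(r)-Y_2(r)\|^2_{\overline{H}}\,\diff r,
\]
where the first factor comes from Cauchy--Schwarz. Multiplying by $e^{-2\lambda s}$ gives a constant of order $\int_t^s (T-r)^{-1/2}e^{-2\lambda(s-r)}\,\diff r\to 0$ as $\lambda\to\infty$, uniformly in $s$, since the kernel is integrable. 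Alternatively, one can split $[t,T]$ into finitely many short subintervals. Either way $\mathcal{T}$ is a contraction, and we obtain a unique mild solution $Y_\gamma\in C([t,T];\overline{H})$. Pathwise uniqueness follows from a Gronwall-type lemma with the same integrable kernel.

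Finally, set $u^*(s)=\gamma(\nabla^B v(s,Y_\gamma(s)))$. This process is progressively measurable and $U$-valued, so it belongs to $\mathcal{U}$. By construction $X^*=Y_\gamma$ and $u^*(s)\in\Gamma(\nabla^B v(s,X^*(s)))=\Psi(s,X^*(s))$, so the feedback condition \eqref{eq:feedback_cond} holds. Theorem \ref{thm:verification}\ref{item:verification_opt}, equivalently Corollary \ref{cor:optimal_feedback}, then yields optimality and $J(t,x,u^*)=v(t,x)=V(t,x)$. This step uses the assumptions of Proposition \ref{prop:mild_is_strong}, including Hypothesis \ref{hp:analytic smoothing property}, which we take as implicit in the statement.
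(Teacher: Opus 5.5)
Your proposal is correct and follows essentially the paper's route: a contraction argument for the mild closed-loop map, driven by the Lipschitz bound $\|\nabla^B v(s,y_1)-\nabla^B v(s,y_2)\|_K\le C(T-s)^{-1/2}\|y_1-y_2\|_{\overline{H}}$, which comes from Theorem~\ref{thm:value_regularity}-\ref{item:val_reg_i} and the mean value theorem, followed by Theorem~\ref{thm:verification}-\ref{item:verification_opt}. The paper absorbs the integrable singularity pathwise in $C([t,T];\overline{H})$ with the uniform norm, via singular Gronwall or a high power of $\mathcal{T}$, whereas you use an exponentially weighted mean-square norm; one small fix is that continuous progressively measurable processes are not complete under $\sup_s\mathbb{E}\,e^{-2\lambda s}\|Y(s)\|^2_{\overline{H}}$, so take the fixed point among all progressively measurable processes with finite norm and recover path continuity afterwards from $Y=\mathcal{T}(Y)$, using continuity of the drift convolution and of the stochastic convolution, which you and, implicitly, the paper both assume.
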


\begin{remark}[Integrability and Payoff Regularity]
    The assumption that $\phi \in C^1_b(H)$ is critical for the well-posedness of the closed loop equation up to the terminal time $T$.
    Specifically, under this assumption, Theorem \ref{thm:value_regularity}-\ref{item:val_reg_i} ensures that the second-order derivative $\nabla^B \nabla v(s, \cdot)$ satisfies the estimate $\|\nabla^B \nabla v(s, \cdot)\| \le C(T-s)^{-1/2}$. This singularity is integrable in time, allowing for the application of the singular Gronwall lemma.
    Conversely, if $\phi$ were merely continuous (as in typical option payoffs), the Hessian singularity would scale as $(T-s)^{-1}$ (Theorem \ref{thm:value_regularity}-\ref{item:val_reg_ii}), rendering the feedback term non-integrable and the control potentially unbounded as $s \to T$.
\end{remark}

\begin{proof}
    We prove the existence and uniqueness of the solution to the closed loop equation via a fixed-point argument in the Banach space $\mathcal{Z} \coloneqq C([t,T]; \overline{H})$ equipped with the uniform norm.
    The mild formulation of \eqref{eq:CLE_gamma} is given by the operator $\mathcal{T}: \mathcal{Z} \to \mathcal{Z}$:
    \[
    \mathcal{T}(Y)(s) \coloneqq \overline{S}(s-t)x + \int_t^s \overline{S}(s-r) B \gamma(\nabla^B v(r, Y(r))) \,\diff r + \int_t^s \overline{S}(s-r) G \,\diff W(r).
    \]
    We verify the contraction property. Let $Y_1, Y_2 \in \mathcal{Z}$. We estimate the difference in the $\overline{H}$-norm:
    \begin{align*}
        \|\mathcal{T}(Y_1)(s) - \mathcal{T}(Y_2)(s)\|_{\overline{H}}
        &\le \int_t^s \|\overline{S}(s-r)\|_{\mathcal{L}(\overline{H})} \|B\|_{\mathcal{L}(K,\overline{H})} \|\gamma(\nabla^B v(r, Y_1(r))) - \gamma(\nabla^B v(r, Y_2(r)))\|_K \,\diff r.
    \end{align*}
    Using the Lipschitz continuity of the selection $\gamma$ (Hypothesis \ref{hyp:feedback_regularity}) with constant $L_\gamma$, and the uniform bound on the semigroup $M_T = \sup_{r \le T} \|\overline{S}(r)\|$, we have:
    \[
    \|\mathcal{T}(Y_1)(s) - \mathcal{T}(Y_2)(s)\|_{\overline{H}} \le M_T \|B\| L_\gamma \int_t^s \|\nabla^B v(r, Y_1(r)) - \nabla^B v(r, Y_2(r))\|_K \,\diff r.
    \]
    Since $\phi \in C^1_b(H)$, Theorem \ref{thm:value_regularity}-\ref{item:val_reg_i} implies that $v(r, \cdot)$ is twice differentiable with $\nabla^B \nabla v(r, \cdot)$ continuous. By the Mean Value Theorem, the Lipschitz constant of the map $y \mapsto \nabla^B v(r, y)$ is bounded by the norm of the second derivative. Using the estimate \eqref{stimanablav^2} from Theorem \ref{thm:value_regularity}:
    \[
    \|\nabla^B v(r, Y_1(r)) - \nabla^B v(r, Y_2(r))\|_K \le \|\nabla^B \nabla v(r, \cdot)\|_{\mathcal{L}(\overline{H}, K)} \|Y_1(r) - Y_2(r)\|_{\overline{H}} \le C (T-r)^{-1/2} \|Y_1(r) - Y_2(r)\|_{\overline{H}}.
    \]
    Substituting this back into the integral inequality:
    \begin{equation}
        \|\mathcal{T}(Y_1)(s) - \mathcal{T}(Y_2)(s)\|_{\overline{H}} \le \tilde{C} \int_t^s (T-r)^{-1/2} \|Y_1(r) - Y_2(r)\|_{\overline{H}} \,\diff r,
    \end{equation}
    where $\tilde{C}$ depends on the system data and $\|\nabla \phi\|_\infty$.
    Since the kernel $k(s,r) = (T-r)^{-1/2}$ is integrable on $[t,T]$, the generalized singular Gronwall lemma guarantees that the map $\mathcal{T}$ (or a sufficiently high power of it) is a strict contraction on $\mathcal{Z}$.
    Thus, there exists a unique fixed point $Y_\gamma \in C([t,T]; \overline{H})$.
    The optimality of the feedback control $u^*(s) = \gamma(\nabla^B v(s, Y_\gamma(s)))$ then follows directly from the Verification Theorem (Theorem \ref{thm:verification}-\ref{item:verification_opt}), as the pair $(u^*, Y_\gamma)$ satisfies the feedback condition by construction.
\end{proof}

\section{Optimal Control of Stochastic Volterra Integral Equations}
\label{sec:application_svie_detailed}

Here, we apply the abstract infinite-dimensional control framework developed in previous sections to the optimal control of Stochastic Volterra Integral Equations (SVIEs) with completely monotone kernels. We recover Markovianity by lifting the state equation onto suitable Hilbert spaces \cite{FGW2024} (see also \cite{bianchi_bonaccorsi_canadas_friesen,WiedermannPhD}).

Consider the one-dimensional controlled SVIE for the state process $y(s) \in \mathbb{R}$:
\begin{equation}
    \label{eq:SVIE_dynamics_final}
    y(s) = z(s) + \int_0^s K(s-r) [ c y(r) + { b} u(r) ]\, \mathrm{d}r + \int_0^s K(s-r) {g} \, \mathrm{d}W(r),
\end{equation}
where $g:[0,T]\to \mathbb{R}$ denotes an admissible initial forward curve (defined below), $W$ is a one-dimensional standard Wiener process, and the coefficients $ c,  b, g \in \mathbb{R}$ are real constants. We assume the \textit{geometric controllability condition} holds, i.e., $ b \neq 0$ and $g \neq 0$. The set of control values $U$ is a compact subset of $\mathbb{R}$. The space of admissible controls $\mathcal{U}$ consists of all $(\mathcal{F}_s)$-progressively measurable processes $u\colon [{t},\infty) \times \Omega \to U$. 

The goal is to minimize the cost functional:
\begin{equation}\label{eq:functional_volterra}
  \tilde{J}(t,z;u) = \mathbb{E} \left[ \int_t^T { \ell_1}(u(s)) \, \mathrm{d}s + {\tilde \phi}(y(T)) \right],
\end{equation}
where $\ell_1:U\to \mathbb{R}$ and $\tilde \phi :\mathbb{R} \to \mathbb{R}$ are the running cost and the terminal cost, respectively.
\subsection{Case $c=0$}\label{subsec:c=0}
In this subsection, we  assume that ${c}=0$. 
{However, in Subsection \ref{subsec:perturb_volterra}, we will drop this assumption by working with the  perturbation theory of analytic semigroups}. 

To resolve the non-Markovian nature of \eqref{eq:SVIE_dynamics_final}, we rely on a Markovian lifting determined by the analytic properties of the kernel $K$.

\begin{definition}[Admissible Kernel]\label{def:admissible_kernel}
A measurable function $K \colon (0,\infty) \to [0, \infty)$ is termed an \textit{admissible kernel} if it admits the Laplace representation 
$ K(t)=K(\infty)+\int_{(0,\infty)}e^{-x t}\,\mu(\mathrm{d}x), \quad t>0,$
where $\mu$ is a Borel measure on $(0,\infty)$. We define the critical integrability exponent $\eta_*$ as:
\begin{equation}
    \eta_* \coloneqq \inf\Big\{\eta\in\mathbb{R} : \int_{(0,\infty)} (1+x)^{-\eta}\,\mu(\mathrm{d}x)<\infty\Big\}.
\end{equation}
We require $\eta_* \in [-\infty, \frac{1}{2})$ to ensure square-integrability of the controlled trajectories.
\end{definition}

\begin{lemma}[Asymptotic Growth Condition]
\label{lemma:growth_condition}
Let $K$ be an admissible kernel in the sense of Definition \ref{def:admissible_kernel} and let $I_K(t) \coloneqq \int_0^t K(s) \,\mathrm{d}s$ denote its primitive. Then, $0 \le \frac{t K(t)}{I_K(t)} \le 1, \quad \forall t > 0.$
Hence, the singularity of the kernel at the origin is strictly controlled by its average, i.e., $\rho_0 \coloneqq \limsup_{t \to 0^+} \left( \frac{ tK(t)}{I_K(t)} \right) \le 1.$
\end{lemma}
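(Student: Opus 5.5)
The argument uses only that $K$ is nonnegative and nonincreasing on $(0,\infty)$; both facts follow from the Laplace representation in Definition \ref{def:admissible_kernel}. Nonnegativity holds because $K(\infty)\ge 0$ and $\mu\ge 0$. Monotonicity holds because $t\mapsto e^{-xt}$ is nonincreasing for every $x>0$, so integrating against $\mu$ and adding the constant $K(\infty)$ preserves this.

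First I will check that $I_K(t)$ is finite and strictly positive for $t>0$, so the quotient makes sense. By Tonelli,
\[
I_K(t)=K(\infty)t+\int_{(0,\infty)}\frac{1-e^{-xt}}{x}\,\mu(\mathrm{d}x).
\]
Since $\frac{1-e^{-xt}}{x}\le C_t(1+x)^{-1}$ and $\eta_*<\tfrac12<1$, we have $\int(1+x)^{-1}\mu(\mathrm{d}x)<\infty$, so $I_K(t)<\infty$. Positivity holds unless $K\equiv 0$, a degenerate case that I will exclude (or treat by convention). The lower bound $tK(t)/I_K(t)\ge 0$ is then immediate from $K\ge 0$.

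For the upper bound, monotonicity gives $K(s)\ge K(t)$ for every $s\in(0,t]$. Integrating over $(0,t)$ yields
\[
I_K(t)=\int_0^t K(s)\,\mathrm{d}s\ge tK(t),
\]
which is exactly $tK(t)/I_K(t)\le 1$. Taking the $\limsup$ as $t\to 0^+$ of a quantity lying in $[0,1]$ gives $\rho_0\le 1$.

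No step here is a real obstacle. The only point needing care is the finiteness and positivity of $I_K$, that is, local integrability of $K$ at $0$. This is where the integrability assumption $\eta_*<\tfrac12$ enters, through the elementary bound on $(1-e^{-xt})/x$ above.
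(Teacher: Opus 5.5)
Your proposal is correct and follows the paper's proof: both derive nonnegativity and monotonicity of $K$ from its Laplace representation, exclude $K\equiv 0$, and integrate $K(s)\ge K(t)$ over $(0,t]$ to obtain $I_K(t)\ge tK(t)$. Your additional check that $I_K(t)<\infty$, via Tonelli and $\int(1+x)^{-1}\mu(\mathrm{d}x)<\infty$ (which holds because $\eta_*<1$), is a detail the paper leaves implicit, and it is a worthwhile addition.
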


\begin{proof}
Since $K$ is an admissible kernel, it admits a representation via a Borel measure $\mu$ on $[0, \infty)$. Consequently, $K$ is completely monotone, which implies $K$ is non-increasing on $(0, \infty)$ and non-negative.
Assume $K$ is not identically zero (otherwise the statement is trivial). Then $I_K(t) > 0$ for all $t > 0$.
By the monotonicity of $K$, for any $s \in (0, t]$, we have $K(s) \ge K(t)$. Integrating this inequality over the interval $(0, t]$ yields $I_K(t) = \int_0^t K(s) \,\mathrm{d}s \ge \int_0^t K(t) \,\mathrm{d}s = t K(t). $
Rearranging the terms (since $I_K(t) > 0$), we obtain $0 \le \frac{t K(t)}{I_K(t)} \le 1$ for all $t > 0$. Taking the limit superior as $t \to 0^+$ immediately yields $\rho_0 \le 1$.
\end{proof}

We now rigorously define the Hilbert spaces and operators that transform the SVIE \eqref{eq:SVIE_dynamics_final} into a Stochastic Evolution Equation (SEE).

\paragraph{Spaces $H_\eta$.} Let $\bar{\mu} \coloneqq \delta_0 + \mu$ be the extended measure on $[0, \infty)$, where $\delta_0$ accounts for the constant part of the kernel (if any) or acts as an auxiliary coordinate. We introduce a family of weighted Hilbert spaces $H_{\eta}$ parameterized by $\eta \in \mathbb{R}$:
\begin{equation}
    H_{\eta} \coloneqq L^2\left(\mathbb{R}_+, (1+x)^{\eta} \bar{\mu}(\mathrm{d}x); \mathbb{R}\right),\quad  \langle f, g \rangle_{H_\eta} \coloneqq \int_0^\infty f(x) g(x) \, (1+x)^\eta \, \bar{\mu}(\mathrm{d}x).
\end{equation}
To handle singular kernels where the "input" direction may not be in the state space, we distinguish between:
\begin{enumerate}
    \item $H \coloneqq H_{\eta}$, where we choose $\eta \in (\eta_*, 1)$. This choice ensures the reconstruction operator is bounded.
    \item $\overline{H} \coloneqq H_{\eta'}$, where we choose $\eta' < \eta_*$. This larger space contains the singular inputs.
\end{enumerate}
Note that $\eta' < \eta$ implies $H \hookrightarrow \overline{H}$ with continuous and dense embedding.

\paragraph{Semigroup.}
The operator $A$ describes the relaxation of the forward curve modes. It is defined as the multiplication operator
\[ D(A) = \left\{ \varphi \in H : \int_0^\infty x^2 \|\varphi(x)\|^2 (1+x)^\eta \, \bar{\mu}(\mathrm{d}x) < \infty \right\}, \quad (A\varphi)(x) = -x \varphi(x), \quad \bar{\mu}\text{-a.e. } x \in \mathbb{R}_+. \]
Since $x \ge 0$, $A$ is dissipative and generates an analytic and self-adjoint semigroup of contractions $S(t)$ on $H$ (and clearly on $\overline{H}$ as well). The semigroup is explicitly given by $(S(t)\varphi)(x) = e^{-tx} \varphi(x).$

The following result establishes the analytic smoothing property of the extended semigroup. Specifically, it demonstrates that the action of $\overline{S}(t)$ is sufficient to instantaneously recover regularity, mapping elements from the extended space $\overline{H}$ directly into the domain of the operator $A$ for all $t > 0$.

\begin{proposition}[Analytic Smoothing of the Lifted Semigroup]
\label{prop:analytic_smoothing}
Let $\eta, \eta'$ be chosen such that $\eta' < \eta_* < \eta < 1$. For any $t > 0$, the extended semigroup $\overline{S}(t)$ (the extension of $S(t)$ to $\overline{H}$) maps $\overline{H}$ continuously into $D(A) \subset H$. Specifically, there exists a constant $C > 0$ depending on $\eta, \eta'$ such that:
\begin{equation}
    \|A\overline{S}(t)z\|_{H} \le C t^{-(1 + \frac{\eta-\eta'}{2})} \|z\|_{\overline{H}}, \quad \forall z \in \overline{H}.
\end{equation}
\end{proposition}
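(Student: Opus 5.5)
Since every operator in sight is a multiplication operator on $L^2(\bar\mu)$ with different weights, I will reduce the claim to a pointwise supremum bound. Take $z\in\overline{H}=H_{\eta'}$ and $t>0$. Then $(A\overline{S}(t)z)(x) = -x e^{-tx} z(x)$, and therefore
\begin{equation*}
\|A\overline{S}(t)z\|_H^2 = \int_0^\infty x^2 e^{-2tx} |z(x)|^2 (1+x)^{\eta}\,\bar\mu(\mathrm{d}x) = \int_0^\infty \Big[x^2 e^{-2tx}(1+x)^{\eta-\eta'}\Big] |z(x)|^2 (1+x)^{\eta'}\,\bar\mu(\mathrm{d}x).
\end{equation*}
This gives $\|A\overline{S}(t)z\|_H^2\le \sup_{x\ge0} m_t(x)\,\|z\|_{\overline H}^2$, where $m_t(x):=x^2e^{-2tx}(1+x)^{\delta}$ and $\delta:=\eta-\eta'>0$. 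Showing that $\overline S(t)z\in D(A)\subset H$ needs the same computation with the weight $(1+x^2)e^{-2tx}(1+x)^\delta$. The $\delta_0$ atom contributes nothing to $A$, and the extension $\overline S(t)$ is just multiplication by $e^{-tx}$ on $H_{\eta'}$, which is consistent with $S(t)$ on the dense subspace $H$.

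Next I bound the supremum. For $x\le 1$ we have $m_t(x)\le 2^\delta$. For $x\ge1$, use $(1+x)^\delta\le 2^\delta x^\delta$, so $m_t(x)\le 2^\delta x^{2+\delta}e^{-2tx}$. Maximizing at $x=(2+\delta)/(2t)$ yields $\sup_{x} x^{2+\delta}e^{-2tx} = \big(\tfrac{2+\delta}{2e}\big)^{2+\delta}t^{-(2+\delta)}$. Taking square roots,
\begin{equation*}
\|A\overline S(t)z\|_H \le C\big(1\vee t^{-(1+\delta/2)}\big)\|z\|_{\overline H}.
\end{equation*}
For $t\le T$ (or for $t\le1$), the maximum is absorbed into $C t^{-(1+\frac{\eta-\eta'}{2})}$ with $C$ depending on $T$. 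The same estimate without the factor $x^2$ gives $\|\overline S(t)z\|_H\le C(1\vee t^{-\delta/2})\|z\|_{\overline H}$. So the map $\overline H\to D(A)$ is bounded, and therefore continuous.

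The step I expect to cause the most trouble is the large-$t$ regime. Because of the $\delta_0$ atom and the region near $x=0$, the bound $Ct^{-(1+\delta/2)}$ cannot hold uniformly as $t\to\infty$ for the graph norm: the $H$-norm part does not decay. For $\|A\overline S(t)z\|$ alone, however, $\sup_x x^2e^{-2tx}(1+x)^\delta$ does behave like $t^{-2}$ for large $t$, and $t^{-2}\le t^{-(2+\delta)}$ fails there. So I will state the estimate on bounded time intervals $t\in(0,T]$, which is all the later sections need, and make the $T$-dependence of $C$ explicit. A cleaner alternative is to write the bound as $C(t^{-1}+t^{-(1+\delta/2)})$, which is valid for all $t>0$. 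The hypotheses $\eta'<\eta_*<\eta<1$ play no role in this computation beyond $\delta>0$; they matter only for the boundedness of $B,G$ on $\overline H$ and of $\Gamma$ on $H$.
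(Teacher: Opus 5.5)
Your argument is the same as the paper's: bound the multiplier $x^2e^{-2tx}(1+x)^{\eta-\eta'}$ pointwise, use $(1+x)^{\delta}\le (2x)^{\delta}$ for $x\ge 1$, then rescale $y=2tx$. You also check $\overline{S}(t)z\in H$, which membership in $D(A)$ requires and the paper omits. Your large-$t$ caveat is correct: the paper's claim $\sup_{x\ge0}\psi_t(x)\le\tilde C\, t^{-(2+\eta-\eta')}$ fails because $\psi_t(1/t)\ge e^{-2}t^{-2}$. So when $\mu$ charges every neighbourhood of $0$ (e.g.\ Riemann--Liouville with $\beta=0$, or the logarithmic kernel), the estimate holds only on $(0,T]$ with $C=C(T)$, or for all $t>0$ in your form $C\bigl(t^{-1}+t^{-(1+\delta/2)}\bigr)$. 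Either version is all that the later sections need.
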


\begin{proof}
Let $z \in \overline{H}$. We compute the norm of $A\overline{S}(t)z$ in $H$.
\begin{align*}
    \|A\overline{S}(t)z\|_{H}^2 &= \int_0^\infty \left\| -x e^{-tx} z(x) \right\|_{\mathbb{R}^n}^2 (1+x)^\eta \, \bar{\mu}(\mathrm{d}x) = \int_0^\infty x^2 e^{-2tx} (1+x)^{\eta-\eta'} \|z(x)\|_{\mathbb{R}^n}^2 (1+x)^{\eta'} \, \bar{\mu}(\mathrm{d}x).
\end{align*}
Define the weighting function $\psi_t(x) \coloneqq x^2 e^{-2tx} (1+x)^{\eta-\eta'}$. We estimate its supremum over $x \ge 0$. Note that for $x \ge 1$, $(1+x)^{\eta-\eta'} \le (2x)^{\eta-\eta'}$. Thus, the dominant term behaves as $x^{2 + \eta - \eta'} e^{-2tx}$.
Standard calculus shows that $\sup_{y \ge 0} y^k e^{-y} = (k/e)^k$. Letting $y = 2tx$, we have $x = y/(2t)$, and the function scales as $(2t)^{-(2+\eta-\eta')}$.
Therefore, $\sup_{x \ge 0} \psi_t(x) \le \tilde{C} t^{-(2+\eta-\eta')}.$
Substituting this back into the integral:
\[
    \|A\overline{S}(t)z\|_{H}^2 \le \tilde{C} t^{-(2+\eta-\eta')} \int_0^\infty \|z(x)\|^2 (1+x)^{\eta'} \, \bar{\mu}(\mathrm{d}x) = \tilde{C} t^{-(2+\eta-\eta')} \|z\|_{\overline{H}}^2.
\]
Taking the square root yields the result. This confirms that the singularity of the kernel is smoothed by the semigroup evolution, allowing the state to enter the domain of the generator instantly.
\end{proof}

\paragraph{Reconstruction operator $\Gamma$.}
The state $y(t)$ of the original SVIE will be recovered from the infinite-dimensional state $X(t) \in H$ via the operator $\Gamma\in \mathcal{L}( H ;\mathbb{R}), \quad \Gamma \varphi \coloneqq \int_0^\infty \varphi(x) \, \bar{\mu}(\mathrm{d}x). $
The boundedness of $\Gamma$ on $H$ is guaranteed by the Cauchy-Schwarz inequality and the condition $\eta > \eta_*$, i.e.,
$|\Gamma \varphi| \le \left( \int_0^\infty (1+x)^{-\eta} \bar{\mu}(\mathrm{d}x) \right)^{1/2} \|\varphi\|_{H}. $

\paragraph{Representation of $K$.} Taking into account Definition \ref{def:admissible_kernel} and the notations above, we have $K(t)=K(\infty)+\int_{(0,\infty)}e^{-x t}\,\mu(\mathrm{d}x)=K(\infty) \Gamma S(t) I_{\{x=0\}}(\cdot)+\Gamma S(t)I_{x >0}(\cdot)$, i.e., the kernel $K$ has the representation
\begin{equation}\label{eq:representation_K}
    K(t)=\Gamma S(t) \xi_K, \quad \textit{where } \xi_K(x):=K(\infty) I_{\{x=0\}}(x)+I_{x\in(0, \infty)}(x).
\end{equation}
The function $\xi_K$ can be understood as the lift of $K$ to $H$ (similarly to the lift $\xi_g$ of initial curves later). Crucially, for singular kernels, $\xi_K \notin H$ because $\int (1+x)^\eta \mu(\mathrm{d}x)$ diverges for $\eta > \eta_*$. However, by construction, $\xi_K \in \overline{H}$.

\paragraph{Unbounded control and diffusion operators.}
We define the control operator $B: \mathbb{R} \to \overline{H}$ and diffusion operator $G: \mathbb{R} \to \overline{H}$ as 
$(B u)(x) \coloneqq \xi_{K}(x)b u, $ $ (G w)(x) \coloneqq \xi_{K}(x)g w, \quad \forall x \in \mathbb{R}_+. $
These act as rank-$1$ operators mapping into the extrapolation space.

\paragraph{Lifted SDE.}
The lifted process $X(t)$ is the mild solution to the stochastic evolution equation in $\overline{H}$:
\begin{equation} \label{eq:lifted_SEE_detailed}
    \mathrm{d}X(t) = AX(t) \mathrm{d}t + Bu(t) \mathrm{d}t + G\mathrm{d}W(t), \quad X(0) = \xi_g.
\end{equation}

The lifting of the Stochastic Volterra Integral Equation \eqref{eq:SVIE_dynamics_final} into a Markovian framework requires the initial signal $g(\cdot)$ to be consistent with the dissipative structure of the infinitesimal generator $A$. In this setting, the forward curve is not merely a boundary condition but the projection of an initial functional state.

\begin{definition}[Admissible Initial Curve]
\label{def:admissible_g_rigorous}
A measurable function $g \colon [0, T] \to \mathbb{R}^n$ is said to be an admissible initial curve if there exists a unique element $\xi_g \in \overline{H}$ (the \textit{lifting} of $g$) such that, for almost every $t \in [0, T]$, the curve admits the representation $z(t) = \Gamma \overline{S}(t) \xi_z = \int_0^\infty e^{-xt} \xi_z(x) \, \bar{\mu}(\mathrm{d} x). $
We denote by $\mathcal Z$ the set of admissible curves.
\end{definition}

We refer to \cite{AMP2019,bianchi_bonaccorsi_canadas_friesen} for Theorems relating solutions to SVIEs and Mild solutions of the lifted SDE \eqref{eq:lifted_SEE_detailed}. Here, we only give a quick intuition of the lifting procedure.

\paragraph{Intuition for the lift.}
Consider the SVIE \eqref{eq:SVIE_dynamics_final} with ${c}=0$ and $z(t)=\Gamma S(t)\xi_z$.
We seek an \(H_\eta\)-valued process $X(t)$ such that $\Gamma X(t)=y(t)$; by the representation \eqref{eq:representation_K}, we have
\begin{align*}
\Gamma X(t)=y(t) &= z(t) + \int_0^t K(t-s)\, b u(s)\, \mathrm{d}s + \int_0^t K(t-s)\, g\, \mathrm{d}W_s\\
&= \Gamma S(t)\xi_z + \int_0^t \Gamma S(t-s)\xi_K\, b u(s)\, \mathrm{d}s + \int_0^t \Gamma S(t-s)\xi_K\,g\, \mathrm{d}W_s\\
&= \Gamma \left [S(t)\xi_z + \int_0^t S(t-s) Bu(s)\, \mathrm{d}s + \int_0^t S(t-s)G\, \mathrm{d}W_s \right].
\end{align*}
Hence we are led to $X(t) = S(t)\xi_g + \int_0^t S(t-s)Bu(s)\, \mathrm{d}s + \int_0^t S(t-s)G\, \mathrm{d}W_s,$
which is the mild solution of \eqref{eq:lifted_SEE_detailed}.

\paragraph{Lifted cost functional.} For $t\in [0,T]$ and $z\in \mathcal G$, the cost functional $\tilde{J}(t,g;u)$ is rewritten as:
\begin{equation}\label{eq:functional_volterra_lifted}
     J(t,\xi_z,u):= \mathbb{E} \left[ \int_t^T { \ell_1}(u(s)) \, \mathrm{d}s + {\phi}(X(T))\right] = \tilde J(t,z;u),
\end{equation}
where $\phi=\tilde \phi \circ \Gamma$.
Since the state equation \eqref{eq:lifted_SEE_detailed} and functional \eqref{eq:functional_volterra_lifted} are of the same form as those considered in Section \ref{sec:preliminaries}, they are well-defined for all $x\in \overline{H}=H_{\eta'}$. Therefore, with the above notations, the abstract control problem we are led to solve is the one in Section \ref{sec:preliminaries}.

\paragraph{Smoothing.} Finally, we check that Hypothesis \ref{hyp:partial_smoothing} is satisfied. We do this by means of Theorem \ref{thm:min_energy}.

\begin{proposition}[Universal Energy Bound]
\label{prop:general_energy_bound}
Let $K$ be an admissible kernel in the sense of Definition \ref{def:admissible_kernel}. Assume that the geometric controllability condition holds (i.e., ${b} \neq 0$ and ${g} \neq 0$).
Then, the partial regularization operator $\Lambda^{\Gamma,B}(t)$, defined in \eqref{eq:Lambda_partial_def}, satisfies the following norm estimate:
\begin{equation} \label{eq:energy_bound_est}
    \|\Lambda^{\Gamma,B}(t)\|_{\mathcal{L}(\mathbb{R}, \mathbb{R})} \leq C t^{-1/2}, \quad \forall t \in (0, T],
\end{equation}
where $C$ is a positive constant depending on the system coefficients and the asymptotic behaviour of the kernel at the origin.
\end{proposition}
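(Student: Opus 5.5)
Since $\mathcal{Y}=\mathbb{R}$ and $K=\mathbb{R}$, everything reduces to scalar computations, so the plan is to compute $\Lambda^{\Gamma,B}(t)$ explicitly rather than bound it abstractly. By the representation \eqref{eq:representation_K}, $\Gamma\overline{S}(t)Bk = b\,K(t)\,k$ and, since $G^*\overline{S}(s)^*\Gamma^*$ is again scalar, $\Gamma\overline{S}(s)G = g\,K(s)$. Hence the projected Gramian is
\begin{equation*}
Q_t^\Gamma=\int_0^t g^2 K(s)^2\,\mathrm{d}s, \qquad \Lambda^{\Gamma,B}(t)=\frac{b\,K(t)}{|g|\left(\int_0^t K(s)^2\,\mathrm{d}s\right)^{1/2}}.
\end{equation*}
Finiteness of $\int_0^t K^2$ follows from $\eta_*<1/2$. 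Positivity of $Q_t^\Gamma$ holds because $K\not\equiv 0$ is completely monotone, hence strictly positive on $(0,\infty)$. These two facts give the range inclusion of Hypothesis \ref{hyp:partial_smoothing}(i) trivially, since every nonzero scalar has full range. Equivalently, one may invoke Theorem \ref{thm:min_energy}: the minimal energy control steering $0$ to $-bK(t)k$ through the channel $gK(t-\cdot)$ has norm exactly the expression above.

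The required estimate is therefore $K(t)^2 \le C^2 t^{-1}\int_0^t K(s)^2\,\mathrm{d}s$, i.e. $tK(t)^2\le C^2\int_0^tK^2$. This follows from the monotonicity argument used in Lemma \ref{lemma:growth_condition}, applied to $K^2$ instead of $K$. Since $K$ is non-increasing and nonnegative, $K(s)^2\ge K(t)^2$ for $s\le t$, so $\int_0^t K(s)^2\,\mathrm{d}s\ge tK(t)^2$. This yields $|\Lambda^{\Gamma,B}(t)|\le \frac{|b|}{|g|}t^{-1/2}$ for every $t>0$, with the explicit constant $C=|b|/|g|$, consistent with the ratio $tK/I_K\le 1$ highlighted in Lemma \ref{lemma:growth_condition}.

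The main obstacle is the first step, namely rigorously justifying the identities $\Gamma\overline{S}(t)\xi_K=K(t)$ and $Q_t^\Gamma=g^2\int_0^tK^2$ on the extended space. Here $\xi_K\notin H$, so $\Gamma$ can only be applied after the smoothing of $\overline{S}(t)$ (Proposition \ref{prop:analytic_smoothing}), and the identity $\Gamma Q_t\Gamma^*=\int_0^t \Gamma\overline{S}(s)GG^*\overline{S}(s)^*\Gamma^*\,\mathrm{d}s$ requires integrability of $s\mapsto K(s)^2$ near $0$. I would check this last point by writing $K(s)^2$ via the Laplace representation and using $\int_0^t e^{-(x+y)s}\,\mathrm{d}s\le \min(t,(x+y)^{-1})$ together with $\int(1+x)^{-\eta}\mu(\mathrm{d}x)<\infty$ for some $\eta<1/2$.

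If this fails for some borderline kernels, I would fall back on assuming $K\in L^2_{\mathrm{loc}}$ explicitly. The rest of the argument is elementary, and the bound holds uniformly on $(0,T]$, in fact on all of $(0,\infty)$.
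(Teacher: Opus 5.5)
Your argument is correct and reaches the same constant $C=|b|/|g|$ as the paper, but by a different route.

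\textbf{The paper's route.} The paper never computes the Gramian. It invokes Theorem~\ref{thm:min_energy}(ii) to identify $|\Lambda^{\Gamma,B}(t)k|$ with the minimal $L^2$-energy of a control $v$ solving the first-kind Volterra equation $\int_0^t K(t-s)\,g\,v(s)\,\mathrm{d}s=-K(t)bk$. It then bounds this from above by the energy of the suboptimal constant control $\bar v=-\frac{K(t)}{I_K(t)}\frac{b}{g}k$. That energy, $\sqrt{t}\,\frac{K(t)}{I_K(t)}\left|\frac{b}{g}k\right|$, is controlled through Lemma~\ref{lemma:growth_condition}, i.e.\ $tK(t)\le I_K(t)$.

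\textbf{Your route.} You evaluate $\Lambda^{\Gamma,B}(t)$ exactly as $bK(t)/(|g|\,\|K\|_{L^2(0,t)})$ and apply the same monotonicity to $K^2$ instead of $K$. By Cauchy--Schwarz, $I_K(t)^2\le t\int_0^t K^2$, so your exact value is dominated by the paper's bound. This is consistent with the true minimizer being $\hat v\propto K(t-\cdot)$ (Theorem~\ref{thm:min_energy}(iii)) rather than a constant.

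\textbf{What each approach buys.} Your route is more elementary, since it needs no Douglas lemma, and it gives the exact value rather than an upper bound. It also makes explicit the local square integrability of $K$ under $\eta_*<1/2$, which the paper uses tacitly. That fact is needed in either approach: otherwise $Q_t^\Gamma$ is infinite and $\mathcal{L}_t^\Gamma$ is not bounded on $L^2(0,t)$. Your sketch settles it, via
\[
\min\bigl(t,(x+y)^{-1}\bigr)\le 2\max(t,1)\,(1+x)^{-1/2}(1+y)^{-1/2}
\]
together with $\int(1+x)^{-1/2}\mu(\mathrm{d}x)<\infty$. So your fallback assumption $K\in L^2_{\mathrm{loc}}$ is unnecessary. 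The paper's test-control argument, by contrast, never requires inverting the Gramian explicitly. It therefore carries over more readily to settings where $Q_t^\Gamma$ is not a scalar (for example vector-valued observations), and it illustrates the control-theoretic interpretation developed in Section~\ref{sec:min_energy}.
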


\begin{proof}
The proof relies on the isometric isomorphism established in Section \ref{sec:min_energy}, which relates the operator norm of the singular kernel to a minimum energy control problem. We first derive the fundamental link between the abstract displacement and the scalar kernel $K(t)$. For any direction $k \in \mathbb{R}$:
\begin{equation} \label{eq:fundamental_link_proof}
    \Gamma \overline{S}(t) B k = \int_0^\infty e^{-tx} (B k) \, \bar{\mu}(\mathrm{d}x) = \left( \int_0^\infty e^{-tx} \bar{\mu}(\mathrm{d}x) \right) b k = K(t) b k,
\end{equation}
where the last equality follows from the representation of $K$.
We now invoke Theorem \ref{thm:min_energy} (part \ref{item:min_energy_ii}), which identifies the operator norm with the value function of a deterministic optimal control problem. Specifically:
\begin{equation} \label{eq:variational_norm}
    |\Lambda^{\Gamma,B}(t)k| = \inf \left\{ \|v\|_{L^2(0,t; \mathbb{R})} : \mathcal{L}_t^\Gamma v = -\Gamma \overline{S}(t) B k \right\}.
\end{equation}
Here, $\mathcal{L}_t^\Gamma$ represents the input-to-projected-state map for the virtual system driven by the diffusion operator $G$. Analogously to \eqref{eq:fundamental_link_proof}, the action of the convolution is given by
\[ \mathcal{L}_t^\Gamma v = \int_0^t \Gamma \overline{S}(t-s) G v(s) \, \mathrm{d}s = \int_0^t K(t-s) g v(s) \, \mathrm{d}s. \]
Thus, the abstract constraint $\mathcal{L}_t^\Gamma v = -\Gamma \overline{S}(t) B k$ is equivalent to the Volterra integral equation of the first kind:
\begin{equation} \label{eq:volterra_constraint}
    \int_0^t K(t-s) g v(s) \, \mathrm{d}s = -K(t) bk.
\end{equation}
To establish the upper bound \eqref{eq:energy_bound_est}, it suffices to evaluate the energy of a \textit{suboptimal} admissible control. We propose a constant strategy ansatz $\bar{v}(s) \equiv \bar{v} \in \mathbb{R}$.
Substituting this into \eqref{eq:volterra_constraint} yields, with usual notation $I_K(t):=\int_0^t K(\tau) \, \mathrm{d}\tau$,
\[
  I_K(t)g\bar v = \left(\int_0^t K(s) \, \mathrm{d}s\right)g\bar v = -K(t)b k.
\]
i.e., we are led to the linear algebraic equation $I_K(t)g \bar{v} = -K(t)b k.$
Under the geometric controllability assumption (if $b\neq 0$ then $g \neq 0$), this equation admits solutions. The minimal norm solution is simply:
\[ \bar{v} = -\frac{K(t)}{I_K(t)} \frac{b}{g} k. \]
The $L^2(0,t; \mathbb{R})$-norm of the constant strategy is:
\[
    \|\bar{v}\|_{L^2} = \sqrt{t} \, |\bar{v}| = \sqrt{t} \, \frac{K(t)}{I_K(t)} \left| \frac{{b}}{{g}} k \right| \le \sqrt{t} \cdot \kappa t^{-1} \left|\frac{{b}}{{g}}\right| |k| = C t^{-1/2} |k|,
\]
where we have applied Lemma \ref{lemma:growth_condition} and set $C = \kappa |{b}/{g}|$.
Finally, since the optimal control $\hat{v}$ has minimal energy, we have $|\Lambda^{\Gamma,B}(t)k| = \|\hat{v}\|_{L^2} \le \|\bar{v}\|_{L^2}$. Taking the supremum over unit scalars $k$ (i.e., $|k|=1$), we conclude:
\[
    \|\Lambda^{\Gamma,B}(t)\|_{\mathcal{L}(\mathbb{R}, \mathbb{R})} \leq C t^{-1/2}.
\]
\end{proof}

\begin{remark}[Comparison with Standard and Partial Smoothing]
The $\Gamma$-smoothing property established here departs significantly from existing literature. Unlike the classical framework of Da Prato and Zabczyk (see \cite[Section 9.4.1]{DaPratoZabczyk14}), which requires infinite-dimensional noise to provide regularization over the entire space $H$, our setting involves finite-rank noise, making global smoothing unattainable. Moreover, the partial smoothing approach of \cite[Section 4]{FGFM-I} is not applicable for structural reasons.
\end{remark}

We are therefore in the setup of the previous sections. We can then apply our results to show existence and uniqueness of mild solutions of the HJB equation \eqref{eq:HJB_formal}, state verification theorems, and construct feedback controls.

\subsection{The case $ c\neq 0$ via perturbation of analytic semigroups}\label{subsec:perturb_volterra}

In the general case where the mean-reversion parameter ${c} \neq 0$, the standard lift described above must be adapted to incorporate the state-dependent drift directly into the infinite-dimensional dynamics. Following the approach in \cite{WiedermannPhD}, we redefine the system dynamics through a rank-one perturbation of the underlying semigroup.

\paragraph{The Perturbed Semigroup $\mathcal{S}(t)$.}
We introduce the perturbed evolution on the extended space $\overline{H}$. The drift term in the scalar SVIE \eqref{eq:SVIE_dynamics_final} is given by $\int_0^s K(s-r) {c} y(r) \, \mathrm{d}r$. Recalling that the lift of the constant direction is $\xi_K \in \overline{H}$ and the observation is $y(r) = \Gamma X(r)$, the generator $\mathcal{A}$ is formally defined as $\mathcal{A} \varphi = A \varphi +{c} \, \xi_K \, \Gamma(\varphi), $ $ \varphi \in D(A).$
Although $\Gamma$ acts on $H$, the vector $\xi_K$ belongs to the extended space $\overline{H}$. Consequently, this constitutes a rank-one perturbation. Rigorously, the associated strongly continuous semigroup $\mathcal{S}(t)$ on $\overline{H}$ is defined via the variation of constants formula \cite[Corollary 1.7]{EngelNagelBook}:
\begin{equation} \label{eq:perturbed_semigroup_VOC}
    \mathcal{S}(t)x = \overline{S}(t)x + {c} \int_0^t \overline{S}(t-s) \xi_K \Gamma(\mathcal{S}(s)x) \, \mathrm{d}s, \quad x \in \overline{H}.
\end{equation}
Thanks to the analytic smoothing property of $\overline{S}(t)$, the integrand is well-defined, and $\mathcal{S}(t)$ inherits the smoothing properties of the original semigroup.

\paragraph{Ornstein-Uhlenbeck Dynamics.}
We define the control and diffusion operators as $\mathcal{B} \coloneqq \xi_K {b}$ and $\mathcal{G} \coloneqq \xi_K {g}$, respectively. The lifted dynamics \eqref{eq:lifted_SEE_detailed} can then be rewritten in terms of the perturbed generator as an Ornstein-Uhlenbeck process:
\begin{equation} \label{eq:lifted_SEE_perturbed}
    \mathrm{d}X(t) = \mathcal{A} X(t) \, \mathrm{d}t + \mathcal{B} u(t) \, \mathrm{d}t + \mathcal{G} \, \mathrm{d}W(t), \quad X(0) = \xi_g.
\end{equation}
This formulation effectively absorbs the path-dependent drift of the SVIE into the Markovian dynamics, restoring the structure required by the abstract control framework derived in Section \ref{sec:preliminaries}.

\paragraph{Relation between $K$ and the Resolvent Kernel $\mathcal{K}$.}
The effective kernel governing the input-output response of the system \eqref{eq:lifted_SEE_perturbed} is no longer $K$, but the so-called \textit{resolvent kernel} $\mathcal{K}$, defined via the perturbed semigroup:
\begin{equation}
    \mathcal{K}(t) \coloneqq \Gamma \mathcal{S}(t) \xi_K.
\end{equation}
By applying $\Gamma$ to the variation of constants formula \eqref{eq:perturbed_semigroup_VOC} on the left and $\xi_K$ on the right, and recalling that $K(t) = \Gamma \overline{S}(t) \xi_K$, we derive the scalar resolvent equation of the second kind linking the original kernel $K$ and the new kernel $\mathcal{K}$:
\begin{equation} \label{eq:resolvent_equation}
    \mathcal{K}(t) =\Gamma\overline{S}(t)\xi_K + {c} \int_0^t \Gamma \overline{S}(t-s) \xi_K \Gamma(\mathcal{S}(s)\xi_K= K(t) + {c} \int_0^t K(t-s) \mathcal{K}(s) \, \mathrm{d}s.
\end{equation}
Since the Laplace transform of the convolution is the product of the Laplace transforms, we get \begin{equation}\label{eq:resolvent}
    \hat{\mathcal{K}}(\lambda) = \hat{K}(t) + {c}\hat{K}(\lambda)\hat{\mathcal{K}}(\lambda) \quad {\textit{i.e. }\hat{\mathcal{K}}(\lambda) = \frac{\hat{K}(\lambda)}{1 - {c} \hat{K}(\lambda)}}
\end{equation}
\paragraph{Monotonicity Assumption.}
{The theory developed in subsection \ref{subsec:c=0} (minimum energy analysis and the $\Gamma$-smoothing) } can be applied once the effective kernel is an admissible kernel.
\begin{hypothesis}[Admissibility of the Resolvent Kernel]
    We assume that the parameter ${c}$ and the original kernel $K$ are such that the resolvent kernel $\mathcal{K}$, defined by \eqref{eq:resolvent_equation}, is completely monotone and satisfies the conditions of Definition \ref{def:admissible_kernel}. In particular, $\mathcal{K}$ is non-negative and non-increasing.
\end{hypothesis}
The following lemma ensures this hypothesis holds in the standard mean-reverting case.
\begin{lemma}[Complete Monotonicity of the Resolvent Kernel]
\label{lemma:resolvent_cm}
Let $K$ be a completely monotone kernel satisfying the Laplace representation in Definition \ref{def:admissible_kernel}. If ${c} < 0$, then the resolvent kernel $\mathcal{K}$ is also a completely monotone kernel.
\end{lemma}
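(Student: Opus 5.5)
Since $\hat{\mathcal K}(\lambda)=\hat K(\lambda)/(1-c\hat K(\lambda))$ by \eqref{eq:resolvent}, I will work entirely on the Laplace side and use Bernstein's theorem, in the form: a function on $(0,\infty)$ is completely monotone iff it is the Laplace transform of a positive measure. Write $c=-\theta$ with $\theta>0$. Then
\[
\hat{\mathcal K}(\lambda)=\frac{\hat K(\lambda)}{1+\theta \hat K(\lambda)}=\frac1\theta\Bigl(1-\frac{1}{1+\theta\hat K(\lambda)}\Bigr).
\]

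\textbf{Step 1: $\hat K$ is a Stieltjes function.} Because $K(t)=K(\infty)+\int e^{-xt}\mu(\diff x)$, Fubini gives
\[
\hat K(\lambda)=\frac{K(\infty)}{\lambda}+\int_{(0,\infty)}\frac{\mu(\diff x)}{\lambda+x}.
\]
This is a Stieltjes function. It is finite for $\lambda>0$, since admissibility implies $\int (1+x)^{-1}\mu(\diff x)<\infty$ when $\eta_*<1$.

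\textbf{Step 2: $\hat{\mathcal K}$ is again Stieltjes.} Stieltjes functions form a cone that is closed under $f\mapsto 1/f$ into the class of complete Bernstein functions, and under $f\mapsto f/(1+\theta f)$. Concretely, $1+\theta\hat K$ is Stieltjes, so $h:=1/(1+\theta\hat K)$ is a complete Bernstein function. Then $1-h$ is of the form $\hat K/(1+\theta \hat K)$. Alternatively, $1/\hat{\mathcal K}=1/\hat K+\theta$ is complete Bernstein plus a nonnegative constant, hence complete Bernstein, so its reciprocal $\hat{\mathcal K}$ is Stieltjes. I would cite Schilling--Song--Vondraček, \emph{Bernstein Functions}, Thm.~7.3 ($f$ is Stieltjes iff $1/f$ is complete Bernstein) for this step.

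\textbf{Step 3: invert the Stieltjes representation.} A Stieltjes function has the form
\[
\hat{\mathcal K}(\lambda)=\frac a\lambda+b+\int_{(0,\infty)}\frac{\sigma(\diff x)}{\lambda+x}.
\]
The constant $b$ vanishes because $\hat{\mathcal K}(\lambda)\le \hat K(\lambda)\to0$ as $\lambda\to\infty$. Hence $\hat{\mathcal K}$ is the Laplace transform of $\mathcal K_0(t):=a+\int e^{-xt}\sigma(\diff x)$, which is completely monotone. By uniqueness of Laplace transforms, and since the solution of \eqref{eq:resolvent_equation} is unique in $L^1_{loc}$, we get $\mathcal K=\mathcal K_0$. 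The constant is $a=\lim_{\lambda\to0}\lambda\hat{\mathcal K}(\lambda)$, which gives $\mathcal K(\infty)=a\ge0$.

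To recover admissibility in the sense of Definition \ref{def:admissible_kernel}, I will compare $\sigma$ with $\mu$. From $0\le\hat{\mathcal K}\le \hat K$ and standard Abelian/Tauberian comparisons, $\int(1+x)^{-\eta}\sigma(\diff x)<\infty$ for the same range of $\eta$ as for $\mu$. I expect this integrability bookkeeping for $\eta_*$ to be the only delicate point; the complete monotonicity itself follows from the closure properties in Step 2. A fallback for complete monotonicity alone is the classical result of Gripenberg--Londen--Staffans (Ch.~5) that the resolvent of a completely monotone kernel with $-c>0$ is completely monotone.
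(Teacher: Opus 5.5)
Your proposal is correct and follows essentially the same route as the paper: $\hat K$ is Stieltjes, $1/\hat{\mathcal K}=1/\hat K+\theta$ is complete Bernstein, hence $\hat{\mathcal K}$ is Stieltjes by \cite[Theorem 7.3]{Schilling2012}. Your ``alternatively'' argument in Step 2 is exactly the paper's; the first variant via $1-h$ would still need a reason why $1-h$ is Stieltjes. Your Step 3 spells out what the paper compresses into ``hence its inverse Laplace transform is an admissible kernel'': the constant $b$ vanishes because $\hat{\mathcal K}\le\hat K\to 0$, and $\mathcal K$ is identified with $\mathcal K_0$ through uniqueness for the resolvent equation.
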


\begin{proof}
The proof relies on the characterization of completely monotone functions via their Laplace transforms. Recall that a function $f: (0, \infty) \to \mathbb{R}$ is completely monotone if and only if its Laplace transform $\hat{f}(\lambda)$ is a \textit{Stieltjes function}.

Since $K$ is an admissible kernel, $\hat{K}(\lambda)$ is a Stieltjes function. From the resolvent equation \eqref{eq:resolvent}, we have 
    $\hat{\mathcal{K}}(\lambda) =  \frac{\hat{K}(\lambda)}{1 + \alpha \hat{K}(\lambda)}= \frac{1}{\frac{1}{\hat{K}(\lambda)}+\alpha},$
with $\alpha \coloneqq -{c} > 0$, which is a Stieltjes function by Remark \ref{rm:sf}. Hence its inverse Laplace transform $\mathcal{K}(t)$ is an admissible kernel.
\end{proof}

\subsection{Popular kernels in  volatility modeling}
We examine specific kernels widely used in volatility modeling, as well as other applications, which satisfy Definition \ref{def:admissible_kernel} \cite{FGW2024,WiedermannPhD}.
\begin{example}[Rough Riemann-Liouville and Gamma kernels]
    Let $K(t)=\frac{t^{\alpha-1} e^{-\beta t}}{\Gamma(\alpha)}$ with $\alpha \in(1 / 2,1)$ and $\beta \geq 0$, which covers both Riemann-Liouville and gamma kernels with $\alpha=H+1 / 2$. Then it follows that $K(\infty)=0$ and $\eta_*=1-\alpha \in(0,1 / 2)$ with
$
\mu(\mathrm{d} x)=\frac{(x-\beta)^{-\alpha}}{\Gamma(1-\alpha) \Gamma(\alpha)} I_{(\beta, \infty)}(x) \mathrm{d} x.
$
\end{example}
\begin{example}[Logarithmic kernel]
    Let $K(t)=\log (1+1 / t)$, then $K(\infty)=0$ and $\eta_*=0$ with
$
\mu(\mathrm{d} x)=\frac{1-e^{-x}}{x} \mathrm{~d} x
$
\end{example}
\begin{example}[Finite-Spectrum Kernels]Let $K(t)=c_0+\sum_{i=1}^N c_i e^{-\lambda_i t}$ with $c_0, c_1, \ldots, c_N \geq 0, N \geq 1$, and $\lambda_1, \ldots, \lambda_N>0$. Then $\eta_*=-\infty, K(\infty)=c_0$ and $\mu$ is given by
$
\mu(\mathrm{d} x)=\sum_{i=1}^N c_i \delta_{\lambda_i}(\mathrm{~d} x),
$
where $\delta_w$ denotes the Dirac measure concentrated in $\{w\}$.
\end{example}
\begin{example}[Shifted kernel] For every $K$ satisfying Definition \ref{def:admissible_kernel} and $\varepsilon>0$, the shifted kernel defined via $K_{\varepsilon}:=K(\cdot+\varepsilon)$ fulfills again Definition \ref{def:admissible_kernel} with $\eta_*=-\infty, K_{\varepsilon}(\infty)=K(\infty)$, and $\mu_{\varepsilon} \ll \mu$ is given by
$
\mu_{\varepsilon}(\mathrm{d} x)=e^{-\varepsilon x} \mu(\mathrm{~d} x) .
$
\end{example}

\appendix
\section{Completely monotone kernels}
\begin{definition}[Completely Monotone Functions]
A function $f \in C^\infty((0, \infty); \mathbb{R})$ is called \textit{completely monotone} if $(-1)^n f^{(n)}(t) \ge 0, \quad \forall t > 0, \quad \forall n \in \mathbb{N}_0.$
We denote this class by $\mathcal{CM}$.
\end{definition}

\begin{theorem}[{Bernstein's Theorem \cite[Theorem 1.4]{Schilling2012}}]
A function $f$ belongs to $\mathcal{CM}$ if and only if there exists a non-negative Radon measure $\mu$ on $[0, \infty)$ such that
$ f(t) = \int_{[0, \infty)} e^{-xt} \, \mu(\mathrm{d}x), $ $ \forall t > 0,$
with the integral converging for all $t > 0$.
\end{theorem}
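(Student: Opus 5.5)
The direction ``$\Leftarrow$'' is a direct computation; for ``$\Rightarrow$'' we follow Widder's classical route: write $f$ as a Taylor integral in terms of $(-1)^n f^{(n)}$, turn this into an integral against positive measures $\mu_n$ with kernels $(1-tx/n)_+^{n-1}$, and pass to a vague limit as $n\to\infty$ using $(1-tx/n)_+^{n-1}\to e^{-tx}$.

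\emph{The direction ``$\Leftarrow$''.} Assume $f(t)=\int_{[0,\infty)}e^{-xt}\,\mu(\mathrm{d}x)$ converges for every $t>0$. Fix $t_0>0$ and take $t\ge t_0$. Then
\[
x^n e^{-xt}\le C_{n,t_0}\,e^{-xt_0/2}\quad\text{for all } x\ge 0 .
\]
The right-hand side is $\mu$-integrable, since the integral converges at $t_0/2$. By dominated convergence we may differentiate under the integral sign $n$ times, which gives
\[
(-1)^n f^{(n)}(t)=\int_{[0,\infty)} x^n e^{-xt}\,\mu(\mathrm{d}x)\ge 0 .
\]
Hence $f\in\mathcal{CM}$.

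\emph{The direction ``$\Rightarrow$''.} We proceed in three steps.

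\textbf{Step 1: limits at infinity.} Since $f\ge0$ is non-increasing, the limit $f(\infty)=\lim_{t\to\infty}f(t)\ge 0$ exists. The same argument applies to each $(-1)^n f^{(n)}$, which is nonnegative and non-increasing. We then show
\[
\lim_{s\to\infty} s^n f^{(n)}(s)=0 \quad\text{and}\quad f^{(n)}(s)\to 0 \text{ for } n\ge1 .
\]
We get this from monotonicity: $(-1)^n f^{(n)}$ is non-increasing, so
\[
\int_{s/2}^{s}(-1)^n f^{(n)}(r)\,\mathrm{d}r \;\ge\; \tfrac{s}{2}\,(-1)^n f^{(n)}(s),
\]
and the left side equals $\pm\bigl(f^{(n-1)}(s)-f^{(n-1)}(s/2)\bigr)$. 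Induction on $n$ then yields the claim.

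\textbf{Step 2: Taylor representation.} Taylor's formula with integral remainder, expanded from $t$ up to $\infty$, gives
\[
f(t)-f(\infty)=\int_t^\infty \frac{(s-t)^{n-1}}{(n-1)!}\,(-1)^n f^{(n)}(s)\,\mathrm{d}s .
\]
The boundary terms vanish by Step 1. Substituting $s=n/x$, this becomes
\[
f(t)=f(\infty)+\int_{(0,\infty)}\Bigl(1-\tfrac{tx}{n}\Bigr)_+^{n-1}\,\mu_n(\mathrm{d}x),
\]
where $\mu_n\ge0$ is the image of the positive density $\frac{s^{n-1}}{(n-1)!}(-1)^n f^{(n)}(s)\,\mathrm{d}s$ under $s\mapsto n/x$.

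\textbf{Step 3: passage to the limit.} We need uniform mass bounds on the $\mu_n$. Evaluating the representation at $t$ gives
\[
\mu_n\bigl((0,\,n/(2t))\bigr)\,2^{-(n-1)}\;\lesssim\; f(t)-f(\infty),
\]
which is too weak. The correct route is to evaluate at $t=\delta$ small. Since $\bigl(1-\tfrac{\delta x}{n}\bigr)_+^{n-1}\ge c>0$ for $x\le 1/\delta$ and $n\ge2$, this gives
\[
\mu_n\bigl((0,1/\delta]\bigr)\le f(\delta)/c .
\]
So the $\mu_n$ are uniformly bounded on compact sets of $[0,\infty)$. Helly's selection theorem then provides a vaguely convergent subsequence $\mu_{n_k}\to\nu$ on $[0,\infty)$.

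\textbf{Main obstacle.} The key point is to exchange the limit and the integral on the non-compact domain. For this we use that
\[
\Bigl(1-\tfrac{tx}{n}\Bigr)_+^{n-1}\le e^{-tx(n-1)/n}\le e^{-tx/2}\quad (n\ge 2),
\]
and that the kernels converge to $e^{-tx}$ uniformly on $[0,\infty)$. The tail $\int_{x>R}(\cdots)\,\mathrm{d}\mu_n$ is then controlled uniformly in $n$ via the bound at $t/2$:
\[
\int_{x>R} e^{-tx/2}\,\mu_n(\mathrm{d}x)\le e^{-tR/4}\sup_n\int e^{-tx/4}\,\mu_n(\mathrm{d}x)<\infty ,
\]
where the supremum is finite by the representation of Step 2 at $t/4$. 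We also expect to need care with mass escaping to $x=0$, since $\mu_n$ lives on $(0,\infty)$. Any such mass contributes a constant, and we absorb it together with $f(\infty)\delta_0$.

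With the tails controlled, we obtain
\[
f(t)=\int_{[0,\infty)}e^{-xt}\,\mu(\mathrm{d}x),\qquad \mu=f(\infty)\delta_0+\nu .
\]
This $\mu$ is a nonnegative Radon measure, and the integral converges for every $t>0$.
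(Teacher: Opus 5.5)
\textbf{Verdict: the approach is sound, but the tail estimate at the step you call the main obstacle is wrong as written.} The paper gives no proof here; it only cites \cite[Theorem 1.4]{Schilling2012}. Widder's route is the right one, and the direction ``$\Leftarrow$'', Steps 1--2 and the local bound $\mu_n((0,1/\delta])\le e\,f(\delta)$ are all correct.

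\textbf{The gap.} You claim $\sup_n\int e^{-tx/4}\mu_n(\mathrm{d}x)<\infty$ ``by the representation at $t/4$''. That representation only controls $\int(1-\tfrac{tx}{4n})_+^{n-1}\mu_n(\mathrm{d}x)$. This kernel is smaller than $e^{-tx/4}$ and vanishes for $x\ge 4n/t$. So it says nothing about $\mu_n$ on $[4n/t,\infty)$, which is the density $\frac{s^{n-1}}{(n-1)!}(-1)^nf^{(n)}(s)$ near $s=0$, where $f^{(n)}$ can blow up arbitrarily fast. The bound is in fact false. Take $f(t)=\int_0^\infty e^{-ty+y^{3/4}}\,\mathrm{d}y$. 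Then
\[
\int e^{-sx}\mu_n(\mathrm{d}x)=\int_0^\infty e^{y^{3/4}}\,\mathbb{E}\bigl[e^{-sny/G_n}\bigr]\,\mathrm{d}y ,
\]
with $G_n$ a $\mathrm{Gamma}(n,1)$ variable. Restricting to $\{G_n\ge\sqrt y\}$ and using $\mathbb{P}(G_n\ge a)\ge e^{-a}$ gives the lower bound $\int_0^\infty e^{y^{3/4}-(sn+1)\sqrt y}\,\mathrm{d}y=+\infty$. This holds for every $n$ and every $s>0$. Uniform convergence of the kernels on all of $[0,\infty)$ does not rescue the step either, because $\mu_n$ typically has infinite total mass; for $f(t)=1/t$, every $\mu_n$ is Lebesgue measure.

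\textbf{The fix.} Compare the kernel with itself at a smaller time, not with an exponential. For $0<t'<t$, $n\ge 2$ and $tx<n$,
\[
\frac{1-tx/n}{1-t'x/n}\le 1-\frac{(t-t')x}{n}\le e^{-(t-t')x/n}.
\]
Hence
\[
\Bigl(1-\tfrac{tx}{n}\Bigr)_+^{n-1}\le e^{-(t-t')x/2}\Bigl(1-\tfrac{t'x}{n}\Bigr)_+^{n-1}\qquad\text{for all } x\ge 0 .
\]
The representation at $t'$ then gives, uniformly in $n$,
\[
\int_{(R,\infty)}\Bigl(1-\tfrac{tx}{n}\Bigr)_+^{n-1}\mu_n(\mathrm{d}x)\le e^{-(t-t')R/2}f(t').
\]
On $[0,R+1]$, use uniform convergence of the kernels together with your local mass bound. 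Test the vague convergence against $\chi_R(x)e^{-tx}$, where $\chi_R\in C_c([0,\infty))$ satisfies $\mathbf{1}_{[0,R]}\le\chi_R\le\mathbf{1}_{[0,R+1]}$ and is nondecreasing in $R$. Letting $R\to\infty$ by monotone convergence gives $f(t)-f(\infty)=\int e^{-tx}\nu(\mathrm{d}x)$.

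An alternative is to treat first the case $f(0+)<\infty$. There the $\mu_n$ have total mass at most $f(0+)$ and $e^{-tx}\in C_0([0,\infty))$, so vague convergence suffices. You then reach general $f$ through $f(\cdot+a)$ and uniqueness of the Laplace transform.
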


\begin{definition}[Stieltjes Class]
A function $f: (0, \infty) \to \mathbb{R}$ belongs to the Stieltjes class $\mathcal{S}$ if it admits the representation $f(t) = \frac{a}{t} + b + \int_{(0, \infty)} \frac{1}{x+t} \, \nu(\mathrm{d}x),$
where $a, b \ge 0$ and $\nu$ is a non-negative Borel measure on $(0, \infty)$ satisfying the integrability condition $\int_{(0, \infty)} (1+x)^{-1} \, \nu(\mathrm{d}x) < \infty$.
\end{definition}

\begin{remark}\label{rm:sf}
    Let $f\in\mathcal{S}$. Then the function $\frac{1}{\frac{1}{f}+\alpha}$ belongs to $\mathcal{S}$ for any $\alpha\geq0$ by \cite[Theorem 7.3]{Schilling2012}. Indeed, for any proper\footnote{Here proper means that the function never vanishes on its domain.} $f\in\mathcal{S}$ the reciprocal is a proper complete Bernstein function (see \cite[Definition 6.1]{Schilling2012}) and viceversa. Moreover, for any $\alpha\geq0$ and $g$ complete Bernstein, the function $g+\alpha$ is complete Bernstein.
\end{remark}

\begin{theorem}[{Characterization of $\mathcal{S}$ via Laplace Transform \cite[Theorem 2.2]{Schilling2012}}]
A function $f$ belongs to $\mathcal{S}$ if and only if there exist a constant $c \ge 0$ and a function $g \in \mathcal{CM}$ such that $f$ is the Laplace transform of $g$ (shifted by $c$):
\begin{equation}
    f(t) = c + \int_{0}^{\infty} e^{-st} g(s) \, \mathrm{d}s\equiv c + \int_{0}^{\infty} e^{-st} \ \int_{[0, \infty)} e^{-xs} \, \mu_g(\mathrm{d}x) \, \mathrm{d}s.
\end{equation}
\end{theorem}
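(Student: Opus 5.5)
The plan is to prove both implications by a direct computation. It rests on the elementary identity
\[
\frac{1}{x+t}=\int_0^\infty e^{-st}e^{-xs}\,\mathrm{d}s,\qquad x\ge 0,\ t>0,
\]
on Bernstein's Theorem, and on Tonelli's theorem. Tonelli applies because every integrand involved is non-negative.

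\emph{($\Rightarrow$)} Let $f\in\mathcal{S}$, so $f(t)=\frac{a}{t}+b+\int_{(0,\infty)}\frac{1}{x+t}\,\nu(\mathrm{d}x)$. I set $c:=b$ and $\mu_g:=a\delta_0+\nu$, a non-negative measure on $[0,\infty)$. Since $\frac{a}{t}=\int_0^\infty e^{-st}\,a\,\mathrm{d}s$ is the $x=0$ case of the identity, Tonelli gives
\[
f(t)=c+\int_0^\infty e^{-st}g(s)\,\mathrm{d}s,\qquad g(s):=\int_{[0,\infty)}e^{-xs}\,\mu_g(\mathrm{d}x).
\]
It remains to show $g\in\mathcal{CM}$, and by Bernstein's Theorem it suffices that $g(s)<\infty$ for every $s>0$. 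This follows from $\sup_{x\ge 0}(1+x)e^{-xs}=:C_s<\infty$, which gives $g(s)\le a+C_s\int(1+x)^{-1}\nu(\mathrm{d}x)<\infty$ by the integrability condition in the definition of $\mathcal{S}$. Differentiation under the integral (dominated on $[s_0,\infty)$ by the same bound) then yields $(-1)^ng^{(n)}\ge 0$ directly, should one prefer to avoid quoting Bernstein in this direction.

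\emph{($\Leftarrow$)} Let $f(t)=c+\int_0^\infty e^{-st}g(s)\,\mathrm{d}s$, with $g\in\mathcal{CM}$ and the integral finite for $t>0$. Bernstein's Theorem gives a Radon measure $\mu_g$ with $g(s)=\int e^{-xs}\mu_g(\mathrm{d}x)$. Tonelli and the identity then give $f(t)=c+\int_{[0,\infty)}\frac{1}{x+t}\,\mu_g(\mathrm{d}x)$. Splitting off the atom at the origin, I set $a:=\mu_g(\{0\})$, $b:=c$ and $\nu:=\mu_g|_{(0,\infty)}$, which yields $f(t)=\frac{a}{t}+b+\int_{(0,\infty)}\frac{\nu(\mathrm{d}x)}{x+t}$.

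The main point to check is that $\int_{(0,\infty)}(1+x)^{-1}\nu(\mathrm{d}x)<\infty$. Finiteness of the Laplace transform at $t=1$ gives exactly this, since $f(1)-c=a+\int_{(0,\infty)}\frac{1}{1+x}\,\nu(\mathrm{d}x)<\infty$. In particular $\nu$ cannot carry infinite mass near $0$. Hence $f\in\mathcal{S}$.

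I expect this bookkeeping of finiteness to be the only delicate step: ensuring the transform converges for all $t>0$, and that no mass of $\nu$ accumulates near $0$ or $\infty$ beyond what $(1+x)^{-1}$ permits. I would handle it through the pointwise bound $(1+x)e^{-xs}\le C_s$ in the first direction and through evaluation at $t=1$ in the second.
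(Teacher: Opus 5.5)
Your proof is correct and is essentially the standard argument behind the result, which the paper does not prove but simply cites from \cite[Theorem 2.2]{Schilling2012}: expand $\frac{1}{x+t}=\int_0^\infty e^{-(x+t)s}\,\mathrm{d}s$, swap the integrals by Tonelli, and identify the inner integral via Bernstein's Theorem, with your atom $a\delta_0$ correctly producing the $\frac{a}{t}$ term that the paper's formulation absorbs into $\mu_g$. (The local finiteness, hence $\sigma$-finiteness, of $\mu_g=a\delta_0+\nu$, which Bernstein's Theorem as stated and Tonelli both require, follows from $\int_{(0,\infty)}(1+x)^{-1}\,\nu(\mathrm{d}x)<\infty$, in the same way as your other bounds.)
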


\begin{small}
\paragraph{\textbf{Acknowledgments.}} The authors are grateful to  Fausto Gozzi for useful discussions related to the content of the present manuscript.  
\bibliographystyle{plain}
\bibliography{refs}
\end{small}

\end{document}